\newtheorem{theo}{Theorem}
\newtheorem{lemma}{Lemma}
\newtheorem{cor}{Corollary}
\newtheorem{prop}{Proposition}
\newtheorem{fig}{Figure}
\newtheorem*{ann}{Assumption $\mathcal{H}(\beta)$}
\def\N{\mathbb{N}}
\def\P{\mathbb{P}}
\def\E{\mathbb{E}}
\def\t{\textrm}
\def\d{\textrm{d}}
\def\w{\widetilde}
\def\ind{{\mathchoice {\rm 1\mskip-4mu l} {\rm 1\mskip-4mu l}
{\rm 1\mskip-4.5mu l} {\rm 1\mskip-5mu l}}}
\newcommand{\be} {\begin{equation}}
\newcommand{\ee} {\end{equation}}
\newcommand{\bea} {\begin{eqnarray}}
\newcommand{\eea} {\end{eqnarray}}
\newcommand{\Bea} {\begin{eqnarray*}}
\newcommand{\Eea} {\end{eqnarray*}}
\begin{document}
\title{Upper large deviations for Branching Processes \\ in Random Environment with heavy tails}
\author{Vincent Bansaye \footnote{CMAP, Ecole Polytechnique, Palaiseau} 
, Christian Boeinghoff \footnote{Department of mathematics, Goethe-university Frankfurt/Main} 
}
\maketitle \vspace{1.5cm}

\begin{abstract} Branching Processes in a Random Environment (BPREs) $(Z_n:n\geq0)$  are a generalization of Galton Watson processes where in each generation the reproduction law is picked randomly in an i.i.d. manner. We determine here the upper large deviation of the process when the reproduction law may have heavy tails.  The behavior of BPREs is related to the associated random walk of the environment, whose increments are distributed like the logarithmic mean of the offspring distributions. We obtain an expression of the upper rate function of  $(Z_n:n\geq0)$, that  is the limit  of  $-\log \mathbb{P}(Z_n\geq e^{\theta n})/n$ when $n\rightarrow \infty$. It   depends on the rate function of the associated random walk of the environment, the logarithmic cost of survival $\gamma:=-\lim_{n\rightarrow\infty} \log \mathbb{P}(Z_n>0)/n$ and the polynomial decay $\beta$  of the tail distribution of $Z_1$. We give interpretations of this rate function in terms of  the least costly ways for the process $(Z_n: n \geq 0)$
of attaining extraordinarily large values and describe the phase transitions. We derive then the rate function when the reproduction law does not have heavy tails, which generalizes the  results of  B\"oinghoff and Kersting (2009) and  Bansaye and Berestycki (2008) for upper large deviations. Finally, we specify the upper large deviations for  the Galton Watson processes with heavy tails.
\end{abstract}

{\em AMS 2000 Subject Classification.} 60J80, 60K37, 60J05,  92D25

{\em Key words and phrases.} Branching processes, random
environments, large deviations, random walks, heavy tails.
\maketitle

\section{Introduction}
Branching processes in a random environment have been introduced in \cite{athreya71} and \cite{smith69}. 
In each generation, an offspring distribution is chosen at random, independently from one generation to the other.  We can think of  a population of plants which have a one year life-cycle. Each year the weather conditions (the environment) vary, which impacts the reproductive success of the plant. Given the climate, all the plants reproduce independently according to the same  mechanism.\\ 
Initially, these processes have mainly been studied under the assumption of i.i.d. offspring distributions which are geometric, or more generally, linear fractional \cite{afa80, kozlov76}. Then, the case of general offspring distributions has attracted attention \cite{kersting052, kersting053, birkner05, kersting00}. 

Recently, several results about  large deviations of branching processes in random environment for offspring distributions with weak tails have been proved. More precisely, \cite{kozlov06} ensures that   $\mathbb{P}(Z_n\geq \exp(\theta n))$  is equivalent to $I(\theta)\P(S_n\geq \theta n)$ for geometric offspring distributions and $\theta$ large enough. In \cite{bansaye08}, the authors give a general upper  bound for the rate function and compute it  when each individual leaves at least one offspring, i.e. $\mathbb P(Z_1=0)=0$. Finally \cite{BK09} gives an expression of  the upper rate function when the reproduction laws have at most geometric tails, which excludes  heavy tails. \\
Exceptional growth of BPREs can be due to an exceptional environment  and/or to exceptional reproduction in some given environment. In this paper, we focus on large deviation probabilities when the offspring distributions  may have  heavy tails and the exceptional reproduction of a single individual can now contribute to the large deviation event.  This leads us to consider new auxiliary power series and higher order derivatives of generating functions for the proof.  \\

$\quad$ Let us give now the formal definition of the process $(Z_n:n\in\N)$, $\N=\{0,1,2,3,\ldots\}$, by considering a random probability generating function $f$ and  a sequence $(f_n: n\geq 1)$
of i.i.d. copies of $f$ which serve as random environment. Conditionally on the environment
 $(f_n:n\geq 1)$, individuals at generation $n$ reproduce
independently of each other and their offsprings have generating function $f_{n+1}$. 
We denote by $Z_n$ 
 the number of particles in  generation $n$
and $Z_{n+1}$ is the sum of $Z_n$ independent random variables with generating function $f_{n+1}$.
That is, for every $n\geq 0$,
$$\E\big[s^{Z_{n+1}}\vert Z_0,\dots,Z_n; \ f_1,\dots,f_{n+1}\big]=f_{n+1}(s)^{Z_n} \ \t{a.s.} \qquad (0\leq s\leq 1) .$$
In the whole paper, we denote by $\P_k$ the probability associated with $k$ initial particles and then,  we have for all $k\in\N$ and $n\in\N$,
$$\E_k[s^{Z_{n}}\ \vert \ \ f_1, ..., \ f_n]=[f_{1}\circ \cdots\circ f_{n}(s)]^k \quad \t{a.s.} \qquad (0\leq s\leq 1).$$
Unless otherwise specified, the initial population size is $1$.\\

We introduce the exponential rate of decay of the survival probability
\begin{eqnarray}
 \gamma &:=& \lim_{n\rightarrow\infty} -\frac{1}{n} \log \P(Z_n>0) \ .\label{lem31}
\end{eqnarray}
The fact that the limit exists and $0\leq \gamma <\infty$ is classical (see \cite{BK09}) since
the sequence $(-\log \P(Z_n>0))_n$ is subadditive and nonnegative (see  \cite{dembo}).  Essentially, $\gamma=0$ in the supercritical or critical case $(\E(X)\leq 0$) and  
$$\gamma=-\log\big(\inf\{ \E(\exp(sX) : s\in [0,1]\}\big)$$
in the subcritical case. In this latter case,  $\gamma=-\log(\E(f'(1)))$ in the strongly or intermediate subcritical case) ($\E(X\exp(X))\leq 0$) whereas
$\gamma> -\log(\E(f'(1)))$ in the weakly subcritical case ($\E(X\exp(X))>0$). We refer to \cite{GKV03} for more precise asymptotic results on the survival probability in the subcritical case.

$\quad$ Many  properties of $Z$ are mainly determined by the \textbf{random walk associated with the environment} 
$$S_0=0, \qquad S_n-S_{n-1}=X_n \quad (n\geq 1).$$ 
where 
\begin{eqnarray}
 X_n&:=& \log (f_n'(1))  \quad (n\geq 1), \nonumber
\end{eqnarray}
are i.i.d. copies of the logarithm of the mean number of offsprings  
\begin{eqnarray}
 X&:=& \log (f'(1))  \nonumber
\end{eqnarray}
If $Z_0=1$, we get for the conditioned means of $Z_n$
\begin{eqnarray}
\E[Z_n| f_1,\ldots,f_n] &=& e^{S_n} \quad \mbox{a.s.}  \label{ew1}
\end{eqnarray}

In the whole paper, we assume that there exists  $s>0$ such that the moment generating function $\E[\exp(sX)]$ 
is finite and we introduce the rate function $\Lambda$ of the random walk $(S_n:n\in\N)$ 
\begin{eqnarray}
 \Lambda(\theta) &:=& \sup_{ \lambda\geq0} \big\{ \lambda \theta -\log(\E[\exp(\lambda X)]) \big\} \label{rate}
\end{eqnarray}
As $\Lambda$ is convex and lower semicontinuous, there is at most one $\theta\geq0$ with $\Lambda(\theta) \neq \Lambda(\theta+)$. In this case, $\Lambda(\theta+)=\infty$ (see e.g \cite{hollander}, \cite{dembo}). Usually, $\Lambda$ is defined as the Legendretransform of $\log(\E[\exp(\lambda X)])$ and the supremum in (\ref{rate}) is taken over all $\lambda\in\mathbb{R}$. Here, we are only interested in upper deviations, thus setting $\Lambda(\theta)=0$ for $\theta\leq \mathbb{E}[X]$ is convenient.\qed \\[0.3cm]

We write $L=L(f)$ for the random variable associated with the probability generating function $f$:
$$\E[s^{L}\ \vert \ f]=f(s) \quad (0\leq s\leq 1) \quad \t{a.s.}$$
 and we denote by $m=m(f)$ its expectation:
$$m:=f'(1)=\E[L|f]<\infty\quad \t{a.s.}$$

\section{Main results and interpretation}
$\quad$ We describe here  the upper large deviations of the branching process $(Z_n:n\in\N)$ when the offspring distributions may have heavy tails. This means that the probability
that one individual gives birth to an exponential number of offsprings may decrease 'only exponentially'.
More precisely, we work with the following assumption, which ensures that
the tail of the offspring distribution of an individual, conditioned to be positive, decays at least
 with exponent $\beta \in (1,\infty)$ (uniformly with respect to the environments).
\begin{ann}
\label{as1}
 There exists a constant  $0<d<\infty$ such that for every $z\geq 0$, 
\bea
\P(L>z \ \vert \ f, L>0)&\leq& d\cdot (m\wedge 1)\cdot z^{-\beta} \quad \t{a.s.} \label{as22} 
\nonumber 	  
\eea
\end{ann}
The rate function $\psi$ we establish  and interpret below depends on  $\gamma$, $\beta$ and $\Lambda$ and is defined by 
\begin{eqnarray}
\psi(\theta):=\inf_{t\in[0,1],  s\in [0,\theta]} \Big\{t\gamma+\beta s+(1-t)\Lambda((\theta-s)/(1-t))\Big\} \ \ (=\psi_{\gamma,\beta,\Lambda}(\theta)). \label{psidef} 
\end{eqnarray}
 Note that in the supercritical case (i.e.  $\E[\log(f'(1))]>0$), $\psi$ simplifies to
$$\psi(\theta)=\inf_{s\in[0,\theta]} \{\beta s+\Lambda(\theta-s)\}.$$  
\begin{theo} \label{theo1}
Assume that for some $\beta\in(1,\infty)$, $\log(\P(Z_1>z))/\log(z) \stackrel{z\rightarrow \infty}{\longrightarrow } -\beta$ and that additionally $\mathcal{H}(\beta)$ holds. Then for every $\theta\geq 0$, 
$$-\frac{1}{n} \log(\P(Z_n\geq e^{\theta n}))\stackrel{n\rightarrow \infty}{\longrightarrow } \psi(\theta).$$
\end{theo}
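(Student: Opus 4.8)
The plan is to prove matching upper and lower bounds for $-\tfrac1n\log\P(Z_n\ge e^{\theta n})$, with the limit being $\psi(\theta)$. The heuristic behind $\psi$ reads off directly from its definition: to reach size $e^{\theta n}$ at time $n$, the process can spend a fraction $t$ of the time merely surviving (paying $\gamma$ per unit time), can use one exceptional birth event producing roughly $e^{sn}$ offspring from a single individual (paying $\beta s$, by the polynomial tail of exponent $\beta$), and can let the remaining $(1-t)n$ generations grow by a favourable environment with empirical mean slope $(\theta-s)/(1-t)$ (paying $(1-t)\Lambda((\theta-s)/(1-t))$ via the random walk large deviation principle for $(S_n)$). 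Optimizing over $t\in[0,1]$ and $s\in[0,\theta]$ gives $\psi$.

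\textbf{Lower bound on the probability (upper bound on the rate).} For fixed $t,s$ in the relevant ranges, I would construct an explicit strategy and bound its probability from below. Pick an environment sequence for which $S_{\lceil tn\rceil}$ stays controlled, survival occurs up to time $\lceil tn\rceil$ (cost $e^{-\gamma t n(1+o(1))}$ by \eqref{lem31}), then at generation $\lceil tn\rceil$ one surviving individual has an offspring of size at least $e^{sn}$ — using $\log\P(Z_1>z)/\log z\to-\beta$, this costs $e^{-\beta s n(1+o(1))}$ — and finally over the last $(1-t)n$ generations impose an environment with $S_n-S_{\lceil tn\rceil}\approx (\theta-s)n$, which by Cramér's theorem for the i.i.d.\ walk $(S_n)$ costs $e^{-(1-t)\Lambda((\theta-s)/(1-t))n(1+o(1))}$; conditionally on this good environment, the $e^{sn}$ starting individuals produce at least $e^{\theta n}$ descendants with probability bounded away from $0$ (second-moment / Chebyshev argument using $\E[Z_n\mid f_1,\dots,f_n]=e^{S_n}$ from \eqref{ew1} and a variance estimate). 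Multiplying the independent costs and optimizing over $t,s$ yields $\limsup -\tfrac1n\log\P(Z_n\ge e^{\theta n})\le \psi(\theta)$.

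\textbf{Upper bound on the probability (lower bound on the rate).} This is the harder half. The idea is a first-moment/union-bound decomposition over the ``cause'' of the large value: either the environment random walk is itself atypically large, or some individual at some generation $k\le n$ had an atypically large offspring. Concretely, one writes, for a threshold $s$,
\[
\P(Z_n\ge e^{\theta n})\le \P\big(\text{some } f_k \text{ gives an offspring} \ge e^{sn}\big) + \P\big(Z_n\ge e^{\theta n},\ \text{all offsprings} < e^{sn}\big),
\]
controls the first term by $\mathcal H(\beta)$ and a union bound over generations together with an a priori (polynomial in $n$, or sub-exponential) bound on $Z_{k-1}$ on the survival set, and for the second term conditions on the environment and on the generation $k$ and the individual where the exceptional (but bounded by $e^{sn}$) birth occurred, using Markov's inequality on $Z_n$ given the environment — here $\E[Z_n\mid \text{env}]=e^{S_n}$ forces $S_n$ to be large, producing the $\Lambda$ term, while the time before that individual must be survived, producing the $\gamma t$ term. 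One must iterate/optimize this over the single dominant exceptional birth; the key new auxiliary objects (alluded to in the introduction: ``new auxiliary power series and higher order derivatives of generating functions'') enter precisely to make rigorous that a single exceptional birth, rather than many, is the cheapest route, and to handle the truncated generating functions $f$ restricted to $\{L< e^{sn}\}$.

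\textbf{Main obstacle.} I expect the principal difficulty to be the upper bound, specifically decoupling the three mechanisms (survival cost $\gamma$, one heavy-tailed birth of size $e^{sn}$, environmental deviation via $\Lambda$) and showing no cheaper mixed strategy exists — in particular ruling out contributions from several moderately large births and controlling $Z_k$ on the survival event uniformly enough in $k\le n$ so that the union bound over generations only costs a polynomial factor. Establishing the a priori control on $Z_k$ (e.g.\ that on $\{Z_k>0\}$, $Z_k$ is not too large unless one of the above events occurred) and the truncated-generating-function estimates is where the technical heart of the argument lies; once those are in place, the optimization collapses to the variational problem defining $\psi$, and continuity/convexity of $\Lambda$ (noted after \eqref{rate}) guarantees the infimum in \eqref{psidef} is attained and the two bounds match.
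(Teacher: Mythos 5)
Your plan for the lower bound on the probability (showing $\limsup -\tfrac1n\log\P(Z_n\ge e^{\theta n})\le\psi(\theta)$) is essentially the paper's: survive until $\lfloor tn\rfloor$, produce $e^{sn}$ individuals via one heavy-tailed birth, then grow under a tilted environment, with a second-moment argument (the paper uses Paley--Zygmund after an exponential change of measure in its Proposition~1) showing the last phase succeeds with non-vanishing probability. That part is on track.

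The hard half, however, is where your route diverges from the paper's and where the gap sits. You propose a union bound over the generation and individual at which the single exceptional birth occurred, with a separate truncation of the offspring law below $e^{sn}$. To make the union bound over generations cost only a polynomial factor you need an \emph{a priori} bound on $Z_{k-1}$ on the survival event, for all $k\le n$; but $Z_{k-1}$ can itself be exponentially large precisely on the event you are trying to bound, so the control you need is circular. You flag this yourself (``establishing the a priori control on $Z_k$ \dots is where the technical heart of the argument lies'') but do not resolve it, and the truncated-generating-function route you suggest is not what the paper does. The paper's actual mechanism is different in structure: it never decomposes over ``which individual had the big birth.'' Instead it proves a deterministic (given $\Pi$) bound on the whole conditional tail, Theorem~\ref{theobond} (and Theorem~\ref{theobond2} for $\beta>2$), of the form $\P(Z_n>k\mid\Pi)\le c\,n^{\lceil\beta\rceil}\Upsilon(\cdot)\,e^{M_n}\,(e^{S_n-M_n}/k)^\beta$, obtained from the telescoping identity~(\ref{tele2}) for $1/(1-f_{0,n}(s))^\mu$, bounds on the auxiliary functions $h_{\mu,k}$ and their derivatives (Lemma~\ref{bound2}, Lemma~\ref{lem3}), and a Tauberian-type step reading off tail decay from the blow-up of $g_{0,n}'(s)$ (or higher derivatives) as $s\to1$. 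The large-deviation upper bound is then extracted by splitting on $\{S_n-M_n\ge\theta n\}$ versus its complement and summing over the minimum time $\tau_n$ of the walk, with Lemma~\ref{gam2} supplying $\gamma$ from $\E[e^{M_n}]$-type expressions. In short: the ``new auxiliary power series and higher order derivatives'' you correctly anticipate are used to get a pathwise conditional tail estimate, not to justify a union bound, and without a substitute for Theorem~\ref{theobond} your decomposition does not close.
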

The assumptions  in this  Theorem ensure  that   the offspring distributions associated to 'some environments' have polynomial tails with exponent $-\beta$, and no tail distribution  exceeds this exponent. \\

The upper bound is proved in section 3, while the proof of the lower bound is given in sections \ref{beta12} and \ref{beta2} by distinguishing the case $\beta \in (1,2]$ and the case $\beta>2$. The proof for $\beta>2$ is technically more involved since it requires higher order derivatives of generating functions and we adapt  in section \ref{beta2}  the arguments of the proof for $\beta\in(1,2]$. \\

\textbf{Remark:} This theorem still  holds if we just assume that there exists a slowly varying function $l$ such that
$$ \P(L>z \ \vert \ f, L>0)\leq d \cdot(m\wedge 1)\cdot \l(z) z^{-\beta} \quad \t{a.s.}$$
instead of assumption $\mathcal{H}(\beta)$. Indeed, by properties of slowly varying functions (see  \cite{BGT}, proposition 1.3.6, page 16), for any $\epsilon>0$, there exists a constant $d_{\epsilon}$ such that $ \P(L>z \ \vert \ f, L>0)\leq d_{\epsilon} \cdot(m\wedge 1)\cdot z^{-\beta+\epsilon} \ \t{a.s}.$
As for fixed  $\theta\geq 0$, $\psi_{\gamma, \beta, \Lambda}$ is continuous in $\beta$, letting  $\epsilon\rightarrow 0$ yields the claim.\qed \\

$\qquad $Let us give two consequences of this result. First, we  derive a large deviation result for offspring distributions without heavy tails by letting $\beta\rightarrow \infty$, which generalizes Theorem 1 in \cite{BK09}.

\begin{cor}
If assumption $\mathcal{H}(\beta)$ is fulfilled for every $\beta>0$, then for every $\theta\geq 0$,
$$-\frac{1}{n} \log(\P(Z_n\geq e^{\theta n}))\stackrel{n\rightarrow \infty}{\longrightarrow } \inf_{t\in[0,1]} \Big\{t\gamma+(1-t)\Lambda(\theta/(1-t))\Big\}.$$
\end{cor}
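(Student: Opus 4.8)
The plan is to let $\beta\to\infty$ in Theorem~\ref{theo1}. The relevant algebraic fact is that $\psi_{\gamma,\beta,\Lambda}(\theta)$ is nondecreasing in $\beta$ and converges, as $\beta\to\infty$, to the value of the infimum in \eqref{psidef} at $s=0$, that is, to $\psi_\infty(\theta):=\inf_{t\in[0,1]}\{t\gamma+(1-t)\Lambda(\theta/(1-t))\}$, which is precisely the expression in the statement. One cannot, however, quote Theorem~\ref{theo1} verbatim, since its hypothesis $\log(\P(Z_1>z))/\log(z)\to-\beta$ may fail for every finite $\beta$ (for instance when $Z_1$ is bounded, or when $\P(Z_1>z)$ decays like $e^{-\sqrt z}$). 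So I would reuse the two halves of its proof separately, each of which needs only part of the hypotheses.

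For the lower bound on the rate function I would invoke the upper bound of Theorem~\ref{theo1} established in Section~3: that argument uses only assumption $\mathcal{H}(\beta)$ and never the precise tail index, hence for every fixed $\beta\in(1,\infty)$ it already gives $\liminf_{n\to\infty}-\frac1n\log\P(Z_n\geq e^{\theta n})\geq\psi_{\gamma,\beta,\Lambda}(\theta)$. Letting $\beta\to\infty$ and using $\psi_{\gamma,\beta,\Lambda}(\theta)\uparrow\psi_\infty(\theta)$ yields $\liminf_{n\to\infty}-\frac1n\log\P(Z_n\geq e^{\theta n})\geq\psi_\infty(\theta)$.

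For the matching upper bound I would isolate, from the lower-bound construction of Sections~\ref{beta12}--\ref{beta2}, the strategy corresponding to $s=0$: make the process survive for $\lfloor tn\rfloor$ generations (cost $\approx t\gamma n$) and then, over the remaining $\approx(1-t)n$ generations, follow an environment along which the associated random walk has slope $\theta/(1-t)$ (cost $\approx(1-t)\Lambda(\theta/(1-t))n$), so that by \eqref{ew1} the conditioned mean of $Z_n$ is of order $e^{\theta n}$; a second moment estimate --- licit because $\mathcal{H}(\beta')$ holds with, say, $\beta'=3$, which gives a finite second moment of the offspring law --- shows $Z_n\geq e^{\theta n}$ with probability bounded away from $0$ on this event. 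Optimising over $t\in[0,1]$ gives $\limsup_{n\to\infty}-\frac1n\log\P(Z_n\geq e^{\theta n})\leq\psi_\infty(\theta)$. Since this $s=0$ strategy uses no heavy-tail input, it is already contained in Sections~\ref{beta12}--\ref{beta2} and nothing new has to be proved.

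It remains to justify $\psi_{\gamma,\beta,\Lambda}(\theta)\to\psi_\infty(\theta)$ as $\beta\to\infty$, which I regard as the only genuinely technical point. Monotonicity in $\beta$ and the bound $\psi_{\gamma,\beta,\Lambda}(\theta)\leq\psi_\infty(\theta)$ (take $s=0$) are immediate. For the reverse inequality, fix $\delta\in(0,\theta)$ and split the infimum in \eqref{psidef} according to whether $s\leq\delta$ or $s>\delta$: on $\{s>\delta\}$ the quantity is at least $\beta\delta\to\infty$, whereas on $\{s\leq\delta\}$, since $\Lambda$ is nondecreasing, $(1-t)\Lambda((\theta-s)/(1-t))\geq(1-t)\Lambda((\theta-\delta)/(1-t))$, so the quantity is at least $\psi_\infty(\theta-\delta)$. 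Hence $\liminf_{\beta\to\infty}\psi_{\gamma,\beta,\Lambda}(\theta)\geq\psi_\infty(\theta-\delta)$, and letting $\delta\downarrow0$ one concludes using that $\psi_\infty$ (an increasing limit of the convex functions $\psi_{\gamma,\beta,\Lambda}$, hence convex) is continuous on the interior of $\{\psi_\infty<\infty\}$. The only place this can go wrong is at the right endpoint of that interval, where $\psi_\infty$ jumps to $+\infty$; there the inequality to be proved is either vacuous or handled by the convention $\Lambda(\theta+)=+\infty$ recalled after \eqref{rate}. I expect this boundary bookkeeping --- not any substantial new idea --- to be the main obstacle.
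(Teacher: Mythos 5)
Your plan is the same as the paper's: apply the \emph{lower-bound half} of the proof of Theorem~\ref{theo1} (which needs only assumption $\mathcal{H}(\beta)$, not the precise tail index) for every $\beta$, let $\beta\to\infty$ using the monotone convergence $\psi_{\gamma,\beta,\Lambda}\uparrow\psi_\infty$, and get the matching bound from the $s=0$ trajectory (just survive for $\approx tn$ generations, then grow in a good environment). You also supply the elementary proof of $\psi_{\gamma,\beta,\Lambda}\to\psi_\infty$, which the paper states without argument, and you correctly observe that Theorem~\ref{theo1} cannot be quoted as a black box because its hypothesis $\log\P(Z_1>z)/\log z\to-\beta$ may fail for every finite $\beta$ --- a point the paper glosses over.

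However, you have systematically swapped the two halves of Theorem~\ref{theo1}'s proof. You attribute the inequality $\liminf_n -\tfrac1n\log\P(Z_n\geq e^{\theta n})\geq\psi_{\gamma,\beta,\Lambda}(\theta)$ to ``the upper bound of Theorem~\ref{theo1} established in Section~3'' and claim that argument ``uses only $\mathcal{H}(\beta)$ and never the precise tail index.'' In fact Section~3 proves the \emph{opposite} inequality $\limsup_n -\tfrac1n\log\P(Z_n\geq e^{\theta n})\leq\psi(\theta)$, and it does invoke the precise tail index (to get $-\tfrac1n\log\P(Z_1\geq e^{sn})\to\beta s$). The half of the proof you actually want for $\liminf\geq\psi_{\gamma,\beta,\Lambda}$ is the \emph{lower bound} of Theorem~\ref{theo1} in Sections~\ref{beta12}--\ref{beta2}, via the tail estimates of Theorems~\ref{theobond} and~\ref{theobond2}; that is indeed the half that needs only $\mathcal{H}(\beta)$. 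Symmetrically, the $s=0$ survive-then-grow trajectory is the construction of Section~3 (via Proposition~\ref{prop1}'s Paley--Zygmund argument), not of Sections~\ref{beta12}--\ref{beta2}. Once the references are corrected, your proof is exactly the paper's, with the added detail on the convergence of $\psi_{\gamma,\beta,\Lambda}$.
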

For example, this result holds if the offspring distributions are bounded ($\P(L\geq a \ \vert \ f)=0$ a.s. for some constant $a$)
or if  $\P(L> z \ \vert \ f, \ L>0)\leq c\exp(-z^{\alpha})$ a.s. for some constants $c,\alpha\geq 0$.\\

$\qquad$ Second we deal with the Galton Watson case,  so the environment is not random and $f$ is deterministic, meaning $\Lambda(\theta)=\infty$ for $\theta>\log m$ and $\Lambda(\log m)=0$. . We refer
to \cite{BB, Rouault} for precise results for large deviations without heavy tails. 
For the decay rate of the survival probability, it is known that (see \cite{AN}) in the subcritical case ($m<1$)
\begin{eqnarray}
 \gamma&=& -\log m \nonumber
\end{eqnarray}
and $\gamma=0$ in the critical ($m=1$) and supercritical ($m>1$) case. Thus, in the subcritical case,
\begin{eqnarray}
 \psi(\theta)&=&-\log m + \beta \theta \ . \nonumber 
\end{eqnarray}
In the critical and supercritical case, it remains to minimize
\begin{eqnarray}
 \psi(\theta)=\inf_{s\in [0,\theta] } \{\beta s + \Lambda(\theta-s)\} \ , \nonumber
\end{eqnarray}
where $\Lambda(\theta)=0$ for $\theta\leq \log m$ and $\Lambda(\theta)=\infty$ for $\theta>\log m$. Hence,
\begin{eqnarray}
 \psi(\theta)=\beta (\theta-\log m) \ . \nonumber
\end{eqnarray}

\paragraph{Path interpretation of the rate function.}
The rate function gives the exponential decay rate of the probability of reaching exceptionally large values, namely
\begin{eqnarray}
\mathbb{P}(Z_n\geq \theta n)&=&\exp(-\psi(\theta)n+o(n)) . \nonumber 
\end{eqnarray}
We consider the following 'natural  paths' which reaches  extraordinarily large values, i.e a path  which realizes $\{Z_n\geq \exp(\theta n)\}$ for $n\gg1$ and $\theta> \E[\log(f'(1))]$. At the beginning, up to time $\lfloor t n\rfloor$, there is a period without growth, that is the process just survives. The probability of this event decreases as  $\exp(-\gamma \lfloor tn\rfloor)$. At time $\lfloor tn\rfloor$, there are very few individuals and one individual has exceptionally many offsprings, namely $\exp(sn)$-many. The probability of this event
 is given by $\P(Z_1\geq \exp(sn))$ so it is of the order of  $\exp(-\beta sn)$. Then the process grows exponentially according to its expectation in a good environment to reach $\exp(\theta n)$.  That is $S$ grows linearly such that $S_n-S_{\lfloor nt\rfloor} \approx  [\theta-s] n$ 
and the probability to observe this exceptionally good environment sequence  decreases
as  $\exp(-(1-t)\Lambda((\theta-s)/(1-t))n)$.  The most probable path to reach  extraordinary large values $\exp(\theta n)$ at time $n$ is then obtained by minimizing 
 the sum of these three 'costs' $\gamma t$, $\beta s$ and $(1-t)\Lambda((\theta-s)/(1-t))$, which gives the rate function $\psi$. \\

$\qquad$The optimal strategy to realize the large deviation event is given 
by the bivariate value  $(t_{\theta},s_{\theta})$ such that
$$ \psi(\theta)=t_{\theta}\gamma+\beta s_{\theta}+(1-t_{\theta})\Lambda((\theta-s_{\theta})/(1-t_{\theta})) \ .$$
More formally, following the proof of \cite{bansaye08}, we should be able to prove the uniqueness of  $(t_{\theta},s_{\theta})$ (except for degenerated situations) and the forthcoming  trajectorial result. But the proof become very heavy and technical. Conditionally on   $Z_n \leq e^{ c n }$, we expect that
$$\sup_{t\in[0,1]} \big\{ \big\vert \log(Z_{[tn]})/n- f_{\theta}(t) \big\vert\big  \} \ \stackrel{n\rightarrow \infty}{\longrightarrow }0$$
in probability in the sense of the uniform norm where
$$f_{\theta}(t):= \left\{\begin{array}{ll}  0, \qquad &  \t{if} \ t\leq t_{\theta} \\
                         \beta s_{\theta}+\frac{c}{1-t_{\theta}}(t-t_{\theta}), \quad & \t{if} \ t> t_{\theta}.
\end{array}
\right.
$$

\begin{fig} Representation of $t\in[0,1]\rightarrow f_{\theta}(t)$.
\label{graphlog}
\begin{center}
$\includegraphics[scale=0.45]{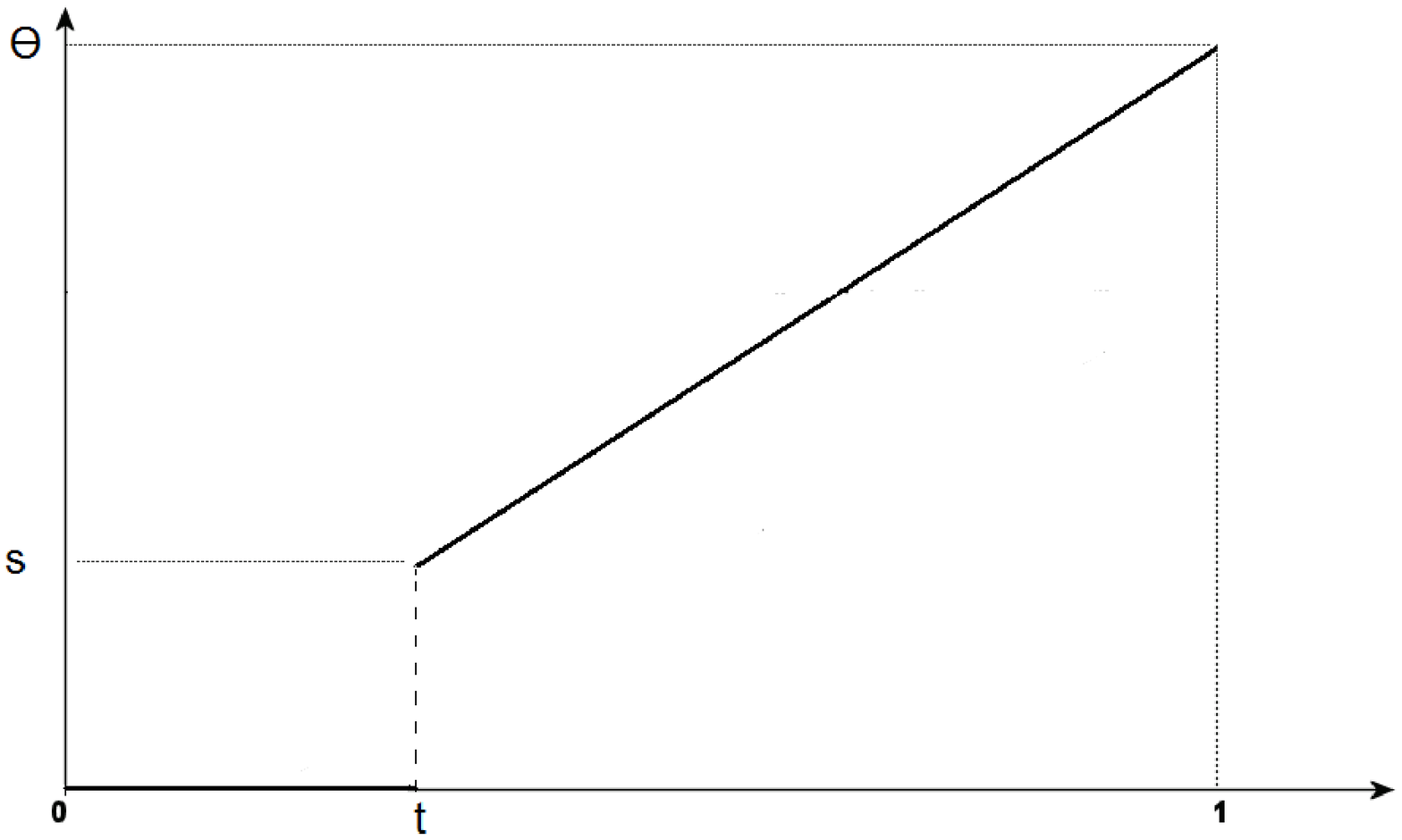}$
\end{center} 
\end{fig}

As detailed in the next paragraph, several  strategies  may occur following the regime of the process  and the value of $\theta$. Except in degenerated cases when the associated path is not unique,   we prove below using convexity arguments that  the  jump occurs  at the beginning  $(s_{\theta}>0\Rightarrow t_{\theta}=0)$
 or at the end $(t_{\theta}=1)$ of the trajectory. Thus upper large deviation events correspond to one of the following trajectories. \\
\begin{fig} Representation of the possible trajectories of the path associated to upper large deviations.
\label{quatre}
\begin{center}
$\includegraphics[scale=0.16]{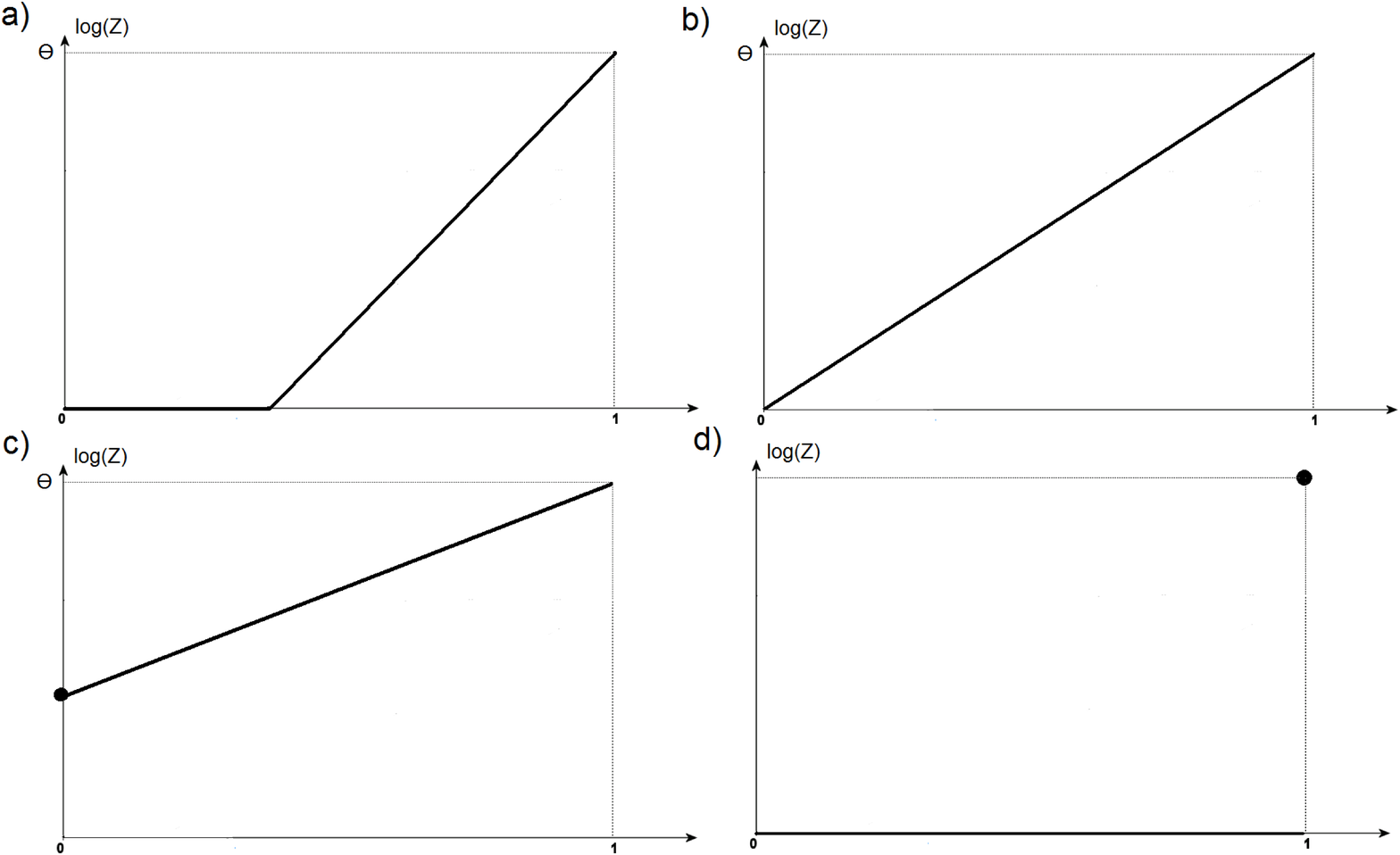}$
\end{center} 
\end{fig}


Obviously, keeping the population size small during a first period $(t_{\theta}>0)$ and growing later (Figure \ref{quatre} a)) 
 can be relevant only in the subcritical case. Actually we see below that the situation from Figure \ref{quatre} a) only occurs if the process is strongly subcritical, as previously observed in \cite{BK09} without heavy tails. In the subcritical case, if $\Lambda'(0)>\beta$,  the associated optimal way is to keep the population size small by just surviving until the final time and then jump to the final value (Figure
\ref{quatre} d)). Then
  the population of size $\exp(\theta n)$ comes from a single parent of one of the last generations. The phase transitions are described later.

In the supercritical case ($\E[\log(f'(1))]>0$), the process does not stay at zero ($t_{\theta}=0$) but may  jump at time $t=0$, and then goes in straight line to reach $\theta$. This corresponds to Figure \ref{quatre} b) and c). 

In the Galton Watson case with mean offspring $m$, the good strategy is either to survive until the final time and jump to the desired value $\theta$ (if $m\leq 1$), or to jump to $\theta-\log(m)$ and then grow normally (if $m>1$).

\paragraph{Graphical construction of the rate function.}
Here, we give another characterization of $\psi$, which will be useful to describe the strategy for upper large deviations
in function of $\theta$. 
As proved in Lemma \ref{exprPsi} (see appendix), $\psi$ is  the largest convex function which satisfies
for all $x,\theta \geq0$
$$
 \psi(0)=\gamma, \quad   \psi(\theta)\leq \Lambda(\theta), \quad \psi(\theta+x)\leq \psi(\theta)+\beta x.$$

The first condition  plays a role iff $\Lambda(0)>\gamma$, which corresponds to the strongly subcritical case (i.e.
$\E[f'(1)\log(f'(1))]<0$, see \cite{GKV03}). Indeed if $\E[X\exp(X)]<0$, then the differentiation of  $s\rightarrow \E[\exp(sX)]$ in $s=1$ is negative and $\Lambda(0)=\sup\{-\log(\E[\exp(sX)] ) : s\geq 0\}>-\log(\E[\exp(X)])=\gamma$. If $\E[X\exp(X)]\geq 0$,   \cite{GKV03} and the definition of $\Lambda$ ensure that  both $\gamma$ and $\Lambda(0)$
are equal to $-\log(\E[\exp(\nu X)])$ where $\nu$ is characterized by $\E[X\exp(\nu X)]=0$. \\

This characterization leads us to construct $\psi$ by three pieces separated by $\theta^*$ and $\theta^{\dagger}$. 
\begin{fig}The following picture gives $\psi$ in the strongly subcritical case:\\
\label{ratefig}
\begin{center}
$\includegraphics[scale=0.33]{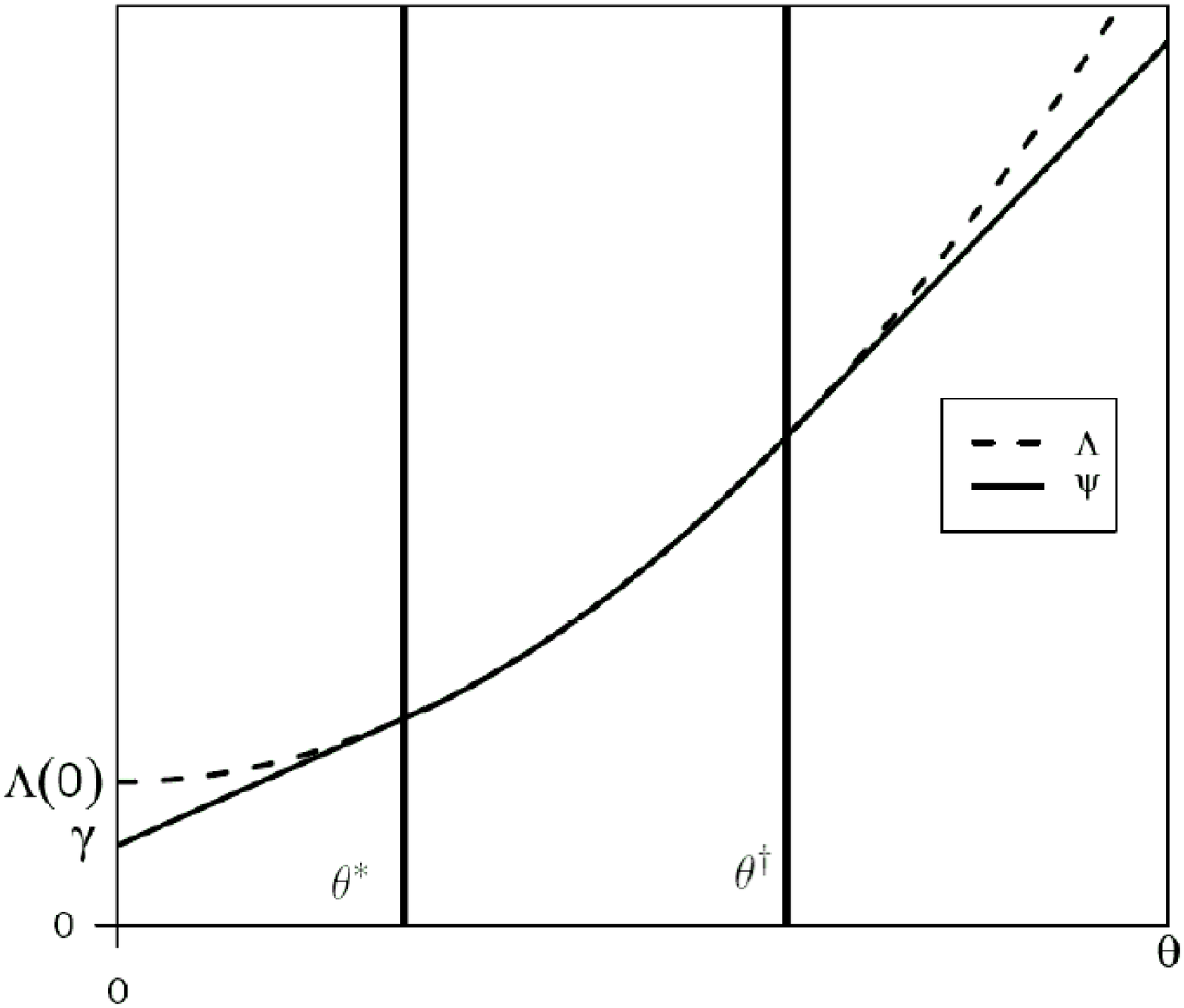}$
\end{center} 
\end{fig}

More explicitly, we define $\chi$ as the largest convex function which satisfies
\begin{eqnarray}
 \chi(0)\leq \gamma, && \chi (\theta)\leq \Lambda(\theta) \nonumber
 \end{eqnarray}
for all $\theta \geq 0$. This function  is the rate function of $Z$ in case of offspring distributions having at most geometric tails (see \cite{BK09}) and is given by
\begin{eqnarray}
 \chi (\theta) &=& \left\{ \begin{array}{l@{\quad,\quad}l}
                              \gamma\left(1-\frac{\theta}{\theta^*}\right)+\frac{\theta}{\theta^*} \Lambda(\theta^*) & \mbox{if} \ \theta<\theta^* \\
\Lambda(\theta) & \mbox{else}
                             \end{array} \right. \nonumber
\end{eqnarray}
where $0\leq \theta^*\leq \infty$ is defined by
\begin{eqnarray}
 \frac{\Lambda(\theta^*)-\gamma}{\theta^*} &=& \inf_{\theta\geq 0} \frac{\Lambda(\theta)-\gamma}{\theta}. \label{thetastar}
\end{eqnarray}
Now define
\begin{eqnarray}
 \theta^{\dagger} &=& \sup\Big\{\theta\geq \max\{0,\E[X]\} : \  \chi'(\theta)\leq \beta \ \mbox{and} \ \chi(\theta)<\infty\Big\}. \label{thetadagger}
\end{eqnarray}
Then 
\begin{eqnarray}
 \psi(\theta) &=& \left\{ \begin{array}{l@{\quad,\quad}l}
                              \chi(\theta) & \mbox{if} \ \theta\leq \theta^{\dagger} \\
\beta\theta-\log (\E[e^{\beta X}]) & \mbox{else}
                             \end{array} \right. \ . \label{rate2}
\end{eqnarray}

\paragraph{Phase Transitions}
Let us first describe the phase transitions (of order two) of the rate function and the  strategies associated with when $\theta^{\dagger}>0$. For that we use the following expression,
\begin{eqnarray}
 \psi(\theta) &=& \left\{ \begin{array}{l@{\quad,\quad}l}
                             \gamma(1-\frac{\theta}{\theta^*}) + \frac{\theta}{\theta^*} \Lambda(\theta^*) & \mbox{if} \ \theta\leq \theta^{*} \\
\Lambda(\theta) & \mbox{if} \ \theta^*<\theta< \theta^{\dagger}\\
 \beta(\theta-\theta^\dagger) + \Lambda(\theta^\dagger) & \mbox{if} \ \theta\geq \theta^{\dagger} 
                             \end{array} \right. \ . \nonumber
\end{eqnarray}
which can be guessed from the previous picture and is also proved in the first section of the Appendix.

$\qquad$  For $\theta<\theta^*$, the rate function $\psi$ is identical with $\chi$. This means that no jump occurs. Conditionally on the event $\{Z_n\geq \exp(\theta n)\}$,  the process first 'just survives with bounded values' until  time $\lfloor t_{\theta}n\rfloor$ ($t_{\theta}\in(0,1)$). Then it grows within a good environment  such that $S_n-S_{\lfloor t_{\theta}n\rfloor}\approx \theta n$ (see Figure \ref{quatre} a)). When $\theta$ increases, the survival period decreases whereas the geometric growth rate of the process remains constant
and is equal to $\theta^*$. 

$\qquad$ For $\theta^*\leq\theta\leq \theta^{\dagger}$, $\psi$ is equal to $\Lambda$. Thus, conditionally on the large deviation event, the process grows exponentially (respectively linearly at the logarithmic scale) from the beginning to the end (see  Figure \ref{quatre} b)). This exceptional growth is due
to a favorable environment such that $S_n\approx \theta n$. 

$\qquad$ For $\theta>\theta^{\dagger}$, the trajectory associated with begins now with a jump : $Z_1\approx \exp(sn)$. Then
it follows an exponential growth which corresponds to a favorable environment $S_n\approx (\theta-s)n$ (see  Figure \ref{quatre} c)). When $\theta$ increases, the initial jump increases whereas the rate of the exponential growth is still equal to $\theta^\dagger$. \\ 

$\qquad$ The case $\theta^{\dagger}=0$ corresponds to  $\psi(\theta)=\gamma+\beta\theta$. Here the good strategy 
consists in just surviving until the end and in one of the prelast generations, one individual has $\exp(\theta n)$-many offsprings (see  Figure \ref{quatre} d)).\\
Finally, we note that in the case $0<\theta=\theta^*=\theta^\dagger$, the best strategy is no longer unique. Indeed, for any $t\in(0,1]$, there exists  $s \in [0,\theta]$ such that all the following trajectories have the same cost. First, the process remains positive and bounded until time $\lfloor tn\rfloor$ (survival period), then it jumps to $\exp(sn)$  and  grows exponentially with a constant rate
 (see Figure \ref{graphlog}). \\

\begin{fig} Representation of $t\in[0,1]\rightarrow f_{\theta}(t)$  in the strongly subcritical case for $\theta_1<\theta_2<\theta_3=\theta^*<\theta_4<\theta_5=\theta^\dagger<\theta_6<\theta_7$.
\label{pathg7}
\begin{center}
$\includegraphics[scale=0.45]{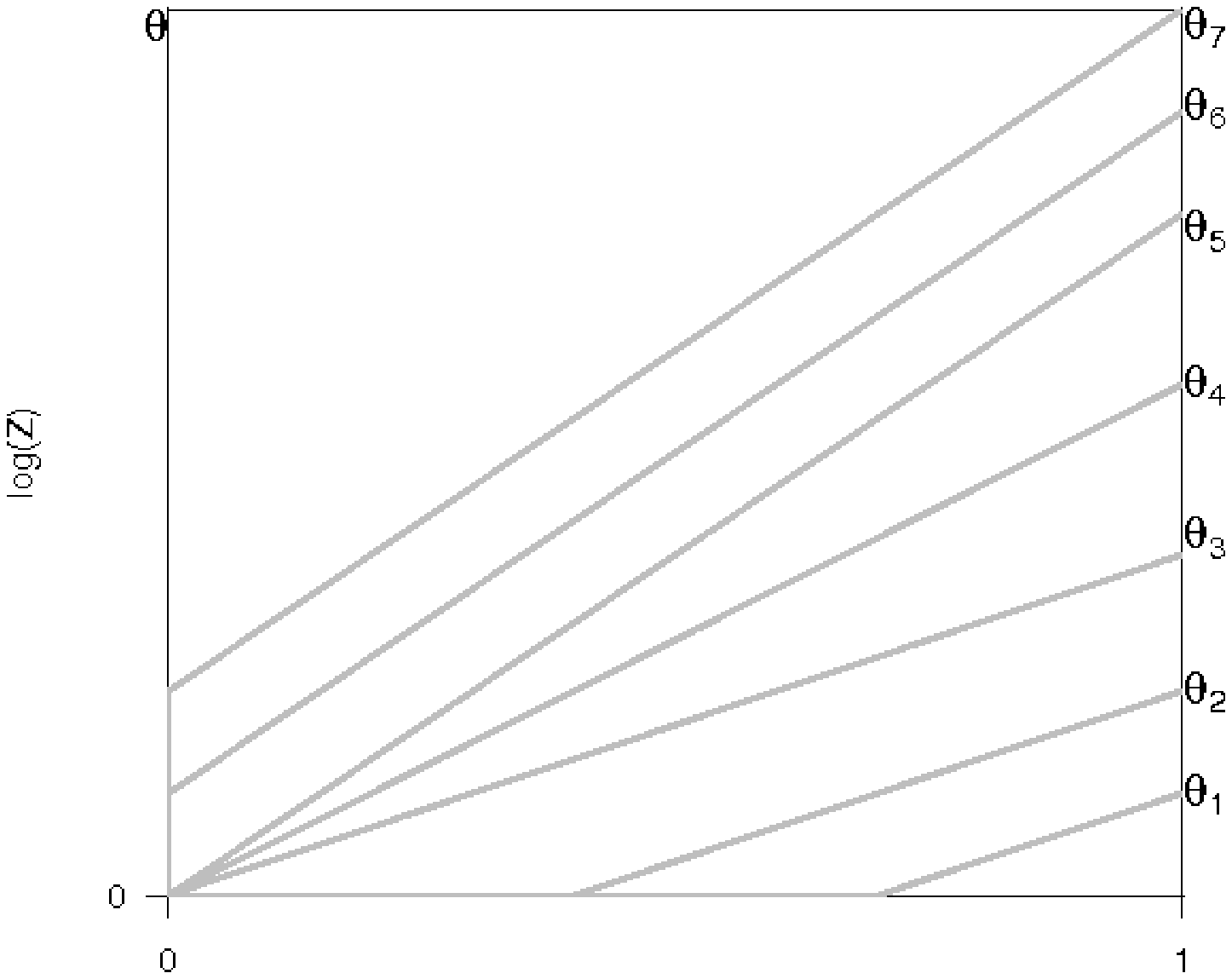}$
\end{center} 
\end{fig}


{\bf Notations:} Unless otherwise is specified, we start the branching process from one single individual and denote by $\P$ the probability associated with.  We denote  by $\P_k$ the probability when the initial size of the population is equal to $k$. Large deviations results actually do not depend on the initial number of individuals if this latter is fixed (or bounded).  

In the whole paper, we  denote by $\Pi:=(f_1,f_2,\ldots)$ the complete environment. 

For simplicity of notations, we are using several times $\leq _c$ to indicate that the inequality holds up to some multiplicative constant (which does not depend on any variable).\\

{\bf Acknowledgements:} {\sl The authors are grateful to G\"otz Kersting
and Julien Berestycki for fruitful discussions. The research was supported in part by the German Research Foundation (DFG), Grant 31120121 and by ANR Manege. Moroever this research benefited from the support of the Chair Modélisation Mathématique et biodiversité VEOLIA-Ecole Polytechnique-MNHN-F.X.}

\section{Proof of the upper bound of Theorem 1}
For the proof of the upper bound of Theorem \ref{theo1}, we need the following result. It ensures that exceptional growth of the population can at least be achieved thanks to some suitable good environment sequences,  whose probability decreases exponentially following the rate function of the random walk $(S_n : n \in\N)$. 
 This result generalizes Proposition 1 in \cite{bansaye08} for an exponential initial number of individuals. 
  With a slight abuse, we denote below by $\exp(s n)$ the initial number of individuals instead of the integer part of $\exp(s n)$. 
\begin{prop}
 Under assumption $\mathcal{H}(\beta)$, for all $\theta\geq 0$ and $0\leq s\leq \theta$,
\begin{eqnarray}
 \limsup_{n\rightarrow\infty} -\frac{1}{n} \log \P_{\exp(s n)}(Z_n \geq \exp(\theta n)) &\leq& \Lambda((\theta-s)+) \ . \nonumber 
\end{eqnarray}
\label{prop1}
\end{prop}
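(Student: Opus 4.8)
The plan is to start from $\exp(sn)$ independent particles and show that a favorable environment sequence, together with the fact that each sub-population survives with probability bounded below, forces $Z_n$ above $\exp(\theta n)$ with probability that decays no faster than $\exp(-\Lambda((\theta-s)+)n)$. First I would fix $\theta-s=:\delta$ and, by the definition of $\Lambda$ as a Legendre transform together with convexity and lower semicontinuity, reduce to showing that for every $\delta'>\delta$ (so that $\Lambda(\delta')<\infty$ and is the ``nice'' branch) one has $\limsup_n -\tfrac1n\log\P_{\exp(sn)}(Z_n\geq e^{\theta n})\leq\Lambda(\delta')$; letting $\delta'\downarrow\delta$ then gives $\Lambda(\delta+)$. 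By an exponential change of measure (tilting the environment law by $e^{\lambda X}$ with $\lambda$ the optimizer for $\delta'$), the event $\{S_n\approx\delta' n\}$ — more precisely $\{S_n\geq\delta' n - \epsilon n\}$, and ideally $\{S_k\geq -\epsilon n\text{ for all }k\leq n\}$ to control the path — has probability $\geq\exp(-(\Lambda(\delta')+o(1))n)$; this is the standard Cramér lower bound for the random walk of the environment.

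Next, I would work conditionally on such a good environment $\Pi$ and use \eqref{ew1}, which gives $\E[Z_n\mid Z_0=1,\Pi]=e^{S_n}\geq e^{(\delta'-\epsilon)n}$. Starting from $N:=\exp(sn)$ particles, $Z_n$ is a sum of $N$ i.i.d. copies (given $\Pi$) of the single-ancestor population, so $\E[Z_n\mid Z_0=N,\Pi]\geq e^{(s+\delta'-\epsilon)n}=e^{(\theta+(\delta'-\delta)-\epsilon)n}$, which exceeds $e^{\theta n}$ by a genuine exponential factor once $\epsilon<\delta'-\delta$. The point of having $N$ exponentially large is that a second-moment / law-of-large-numbers argument makes $Z_n$ concentrate around its conditional mean: I would estimate $\mathrm{Var}(Z_n\mid Z_0=N,\Pi)=N\cdot\mathrm{Var}(Z_n\mid Z_0=1,\Pi)$ and bound the single-ancestor conditional second moment along a good environment. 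Assumption $\mathcal H(\beta)$ with $\beta>1$ is exactly what is needed here to guarantee the offspring law has a finite (uniformly controlled) first moment and a tail good enough that the truncated second moments do not blow up faster than the mean squared along the tilted environment; then Chebyshev (or a one-sided Paley–Zygmund bound) gives $\P_N(Z_n\geq e^{\theta n}\mid\Pi)\geq 1/2$ for $n$ large, uniformly over the good environment set. This is essentially the strategy of Proposition 1 in \cite{bansaye08}, now carried out with an exponential, rather than fixed, number of initial particles.

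Combining the two steps, $\P_{\exp(sn)}(Z_n\geq e^{\theta n})\geq \tfrac12\,\P(\Pi\in\text{good set})\geq\exp(-(\Lambda(\delta')+o(1))n)$, and letting $\delta'\downarrow\delta=\theta-s$ yields the claim. The main obstacle I anticipate is the variance control of the single-ancestor process along a favorable environment: one must show $\E[Z_n^2\mid Z_0=1,\Pi]\leq_c e^{2S_n}$ (or at worst $e^{2S_n}$ times a subexponential factor) on the good event, which requires iterating the recursion for second moments of $Z_n$ through the composition $f_1\circ\cdots\circ f_n$ and using $\mathcal H(\beta)$ to bound the contribution of $f_k''(1)$-type terms (the ``pair of offspring from a common parent'' terms). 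For $\beta\in(1,2]$ the relevant moments are only barely finite, so one likely has to truncate the offspring variables at level $e^{\eta n}$ (removing the atypical large jumps whose probability is negligible at scale $e^{-\Lambda(\delta')n}$ by the tail hypothesis $\log\P(Z_1>z)/\log z\to-\beta$) before applying the second-moment bound — handling that truncation cleanly, uniformly in the environment, is the delicate point.
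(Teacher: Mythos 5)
Your proposal starts on the same track as the paper---fix $\delta':=$ target slope $>\theta-s$, perform the exponential change of measure on the environment that makes $S_n$ have drift $\delta'$, and recover the Cram\'er price $\exp(-\Lambda(\delta')n)$---but it diverges at exactly the step that carries the burden of the argument, and that step has a real gap that you yourself flag but do not close. You propose to finish by a second-moment/Chebyshev bound on $Z_n$ itself: compute $\mathrm{Var}(Z_n\mid Z_0=\exp(sn),\Pi)=\exp(sn)\mathrm{Var}(Z_n\mid Z_0=1,\Pi)$ and compare with the conditional mean $\exp(sn)e^{S_n}$. The problem is that under $\mathcal H(\beta)$ with $\beta\in(1,2]$, the single-ancestor conditional second moment $\E[Z_n^2\mid\Pi]$ is generically \emph{infinite}, so the bound $\E[Z_n^2\mid\Pi]\leq_c e^{2S_n}$ one would need is simply false; you acknowledge this and suggest truncating offspring at level $e^{\eta n}$ and controlling the error, but you do not carry this out, and it is not a routine adjustment (one must control the truncation error uniformly over the good environment set and over all $n$ generations, while preserving the mean at the exponential scale). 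As written, the plan does not constitute a proof.

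The paper sidesteps the heavy-tail obstruction with a different structural idea that you miss. Rather than asking $Z_n$ to concentrate, it decomposes $Z_n=\sum_{i\le \exp(sn)} Z_n^{(i)}$ over initial ancestors and counts the number $N_n$ of ancestors whose progeny at time $n$ exceeds $N e^{\theta n}$. Conditionally on $\Pi$, $N_n$ is binomial with parameters $(\exp(sn),p_n)$, where $p_n=\P_1(Z_n\ge N e^{\theta n}\mid\Pi)$, so the second-moment input needed for Paley--Zygmund is the (trivially finite) second moment of a binomial, regardless of how heavy the offspring tails are. The only role of $\mathcal H(\beta)$ is to guarantee a finite $p$-th moment for some $1<p<\beta$ so that Guivarc'h--Liu (Theorem~3 of \cite{GuivL}) applies under the tilted, supercritical measure and gives $\widetilde\E[p_n]=\widetilde\P_1(Z_n\ge Ne^{\theta n})\to\widetilde\P_1(\text{non-extinction})>0$, hence $p_n$ is bounded away from $0$ with positive $\widetilde\P$-probability. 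In short: the paper moves the concentration argument from $Z_n$ (whose conditional variance can be infinite) to the ancestor count $N_n$ (which is automatically well-behaved), and replaces your ``bound the second moment of $Z_n$'' step by ``bound $p_n$ from below via a martingale/Kesten--Stigum type result.'' If you want to keep your route, you would have to actually execute the truncation and show that it does not disturb the mean on the tilted good-environment event; the cleaner fix is to adopt the ancestor-counting trick.
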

\begin{proof}
For every $\theta'> 0$, we recall that 
$$\Lambda(\theta')=\sup_{\lambda \geq 0}\{\lambda \theta'- \log \E[\exp(\lambda X)]\}.$$
$\qquad$ First, we assume  that $\E[\exp(\lambda X)]<\infty$ for every $\lambda\geq 0$. Then the derivative   of $\lambda\rightarrow \mathbb{E}[\exp(\lambda X)]$ exists for every $\lambda\geq 0$ and the supremum is reached in   $\lambda=\lambda_{\theta'}$ such that
$$\theta'= \frac{\E[X\exp(\lambda_{\theta'} X)]}{\E[\exp(\lambda_{\theta'}X)]}.$$

Following classical large deviations methods and more specifically  \cite{bansaye08}, we  introduce the probability $\w{\P}$  defined by
$$\w{\P} ( X  \in  \d x) =  \frac{\exp(\lambda_{\theta'}x)}{\E[ \exp(\lambda_{\theta'}X)]}\P(X\in \d x).$$
Under this new probability,  $(S_n:n\in\N)$ is a random walk with drift $\w{\E} [X]=\theta'>0$ and $Z_n$ is  a supercritical BPRE. \\

For all $n\geq 1$, $\theta \in [0,\theta')$ and $\epsilon>0$,
\Bea
&& \P_{\exp(sn)}\big(Z_n\geq \exp([\theta+s]n)\big) \\
&& \quad \geq  \P_{\exp(sn)}\big(Z_n\geq \exp([\theta+s]n); \quad S_n\leq (\theta'+\epsilon)n\big) \\
&& \quad = \E[\exp(\lambda_{\theta'}X)]^n\w{\E}_{\exp(sn)}\big[\exp(-\lambda_{\theta'}S_n)\ind_{\{S_n\leq (\theta'+\epsilon)n, \ Z_n\geq \exp([\theta+s]n)\}}\big] \\
&& \quad \geq \exp\big(n \big[\log(\E[\exp(\lambda_{\theta'}X )]-\lambda_{\theta'}(\theta'+\epsilon)\big]\big) \w{\P}_{\exp(sn)}\big(Z_n\geq \exp([\theta+s]n), \ S_n\leq (\theta'+\epsilon)n \big) \\
&& \quad \geq  \exp(n[-\Lambda(\theta')-\lambda_{\theta'}\epsilon])\big[\w{\P}_{\exp(sn)}\big(Z_n\geq \exp([\theta+s]n)\big)
-\w{\P}\big(S_n> (\theta'+\epsilon)n \big)\big].
\Eea
As $\w{\P}\big(S_n> (\theta'+\epsilon)n \big)\rightarrow 0$ when $n\rightarrow \infty$, we  just need to prove that 
\be
\label{qtpos}
\liminf_{n\rightarrow \infty} \w{\P}_{\exp(sn)}\big(Z_n\geq \exp([\theta+s]n)\big)>0.
\ee
so that we can conclude the proof by letting $\epsilon\rightarrow 0, \theta'\rightarrow \theta$. \\

Relation (\ref{qtpos}) results from the fact that under $\w{\P}$ the population $Z_n$ starting from one single individual grows as $\exp(S_n)\asymp n\theta'$ on the non-extinction event. More precisely,  individuals of the initial population are labeled and the number of descendants in generation $n$ of individual $i$ is denoted by $Z_n^{(i)}$. Introduce then the 'success' probability 
$p_n$:  
$$p_n=\P_1(Z_n\geq N\exp(n\theta  ) \ \vert \ \Pi ) \quad \mbox{a.s}.$$
Then, conditionally on $\Pi$,  for $N\geq 1$, the number of initial individuals
whose number of descendants  in  generation $n$ is larger than $N\exp(n\theta)$,
$$N_n:=\#\{ 1\leq i\leq \exp(sn) : Z_n^{(i)}\geq N\exp(n\theta)\},$$
follows a binomial distribution
of parameters $(\exp(sn),p_n)$.
Moreover, as $\E[N_n \ \vert  \ \Pi]=e^{sn}p_n$ a.s.,
$$\w{\P}_{\exp(sn)}\big(Z_n\geq \exp([\theta+s]n)\big)\geq \w{\P}_{\exp(sn)}\big( N_n\geq \exp(sn)/N)\geq 
 \w{\P}_{\exp(sn)}\Big( N_n\geq \frac{\E[N_n \ \vert  \ \Pi]}{Np_n}\Big).$$
Using the classical inequality due to Paley and Zygmund for $r\in [0,1]$ (see e.g. \cite{kallenberg} page 63),
\begin{eqnarray}
 \P(Y\geq r\E[Y])&\geq& (1-r)^2 \frac{\E[Y]^2}{\E[Y^2]} \ ,\label{pz}
\end{eqnarray}
and adding that  $\E[N_n^2\ \vert  \ \Pi]=e^{2sn}p_n^2+e^{sn}p_n(1-p_n)$ a.s., we get 
$$ \w{\P}_{\exp(sn)}\Big( N_n\geq \frac{\E[N_n \ \vert  \ \Pi]}{Np_n}  \ \Big| \ \Pi \Big)\geq 
\Big[1- 1\wedge \frac{1}{Np_n}\Big]^2\frac{\E[N_n \ \vert \ \Pi]^2}{\E[N_n^2\ \vert \ \Pi]}\geq \frac{\Big[1- 1\wedge \frac{1}{Np_n}\Big]^2}{1+\frac{e^{-sn}}{p_n}} \ \ \mbox{a.s.}$$

Now, we use that under assumption $\mathcal{H}(\beta)$, 
$$\w{\E}\big[\sum_{k\in\N} k^s\P(L=k \ \vert \ f )/m\big]\leq \w{\E}\big[\sum_{k\in\N} k^s\P(L=k \ \vert \ f, \ L>0)\big]<\infty,$$
for every $1<s<\beta$. So Theorem 3 in \cite{GuivL} ensures that for every $N\in\N$,
$$\w{\E}[p_n]=\w{\P}_1(Z_n\geq N\exp(\theta n))\stackrel{n\rightarrow \infty}{\longrightarrow } \w{\P}_1(\forall n \in \N: Z_n>0) >0 .$$
As the right hand side  does  not depend on $N\geq 1$,  we have for $N$ large enough 
$$\delta:=\liminf_{n\rightarrow \infty} \w{\P}(p_n\geq 2/N)>0$$
and get 
$$\liminf_{n\rightarrow \infty}\P_{\exp(sn)}\big(Z_n\geq \exp([\theta+s]n)\big)\geq  \liminf_{n\rightarrow \infty} \w{\E}\Big[\frac{\big[1- 1\wedge 1/Np_n\big]^2}{1+N/2}\Big] \geq \frac{\delta (1-1/2)^2}{1+N/2}>0,$$
which proves (\ref{qtpos}) and ends up the proof when $\E[\exp(\lambda X)]<\infty$ for every $\lambda\geq 0$. The general case follows by a standard approximation argument (see e.g.  \cite{BK09} pages 10/11). 
\end{proof}
\begin{proof}[Proof of the upper bound in Theorem \ref{theo1}]
The proof amounts now to exhibit good trajectories which realize the large deviation event $\{Z_n\geq \exp(\theta n)\}$.  For every $t\in (0,1)$ and $s\in[0,\theta]$, by Markov property,
$$
\P(Z_n\geq \exp(\theta n))
\geq  \P(Z_{[tn]}>0)\P(Z_1\geq \exp(sn))\P_{\exp(sn)}(Z_{n-[tn]}\geq \exp(\theta n)).
$$
First, by ($\ref{lem31}$),
$$-\frac{1}{tn}\log(\P(Z_{[tn]}>0))\stackrel{n\rightarrow \infty}{\longrightarrow } \gamma \ .$$
Second, using that  that $\log(\P(Z_1>z))/\log(z) \stackrel{z\rightarrow \infty}{\longrightarrow } -\beta$,
we have
$$-\frac{1}{n}\log(\P(Z_1\geq \exp(sn)))\stackrel{n\rightarrow \infty}{\longrightarrow }s\beta \ .$$

Finally, by Proposition \ref{prop1},  we get that
$$\limsup_{n\rightarrow \infty} -\frac{1}{(1-t)n}\log(\P_{\exp(sn)}(Z_{n-[tn]}\geq \exp(\theta n)))\leq  \Lambda((\theta-s)/(1-t)+)$$
since
$$\P_{\exp(sn)}(Z_{n-[tn]}\geq \exp(\theta n))=\P_{\exp(s/(1-t).(1-t)n)}(Z_{n-[tn]}\geq \exp(n(1-t)\theta/(1-t))).$$
Combining the first inequality and the last three limits ensures that
$$\limsup_{n\rightarrow \infty}  -\frac{1}{n}\log(\P(Z_n\geq \exp(\theta n)))\leq \inf_{t\in[0,1],  s\in [0,\theta]} \Big\{t\gamma+\beta s+(1-t)\Lambda((\theta-s)/(1-t)+)\Big\}.$$
As convex nonnegative function, $\Lambda$ has at most one jump (to infinity). Thus the above infimum is $\psi(\theta)$. To see this, we only have to consider the jump point. Say, there are $s_\theta\in[0,\theta]$ and $t_\theta\in[0,1)$ such that 
$$ t_\theta\gamma+\beta s_\theta+(1-t_\theta)\Lambda((\theta-s_\theta)/(1-t_\theta))=\psi(\theta)<\infty$$
and $\Lambda((\theta-s_\theta)/(1-t_\theta)+)=\infty$. Then, as $(\theta-s_\theta)/(1-t_\theta)$ is the only jump point, for any $\epsilon>0$ there is a $\delta>0$ such that
\begin{eqnarray}
\psi(\theta)-\epsilon&\leq& t_\theta\gamma+\beta (s_\theta-\delta)+(1-t_\theta)\Lambda((\theta-s_\theta-\delta)/(1-t_\theta)+) \nonumber \\
&=&  t_\theta\gamma+\beta (s_\theta-\delta)+(1-t_\theta)\Lambda((\theta-s_\theta-\delta)/(1-t_\theta)). \nonumber
\end{eqnarray}
Now letting $\epsilon\rightarrow 0$ proves the result and thereby the upper bound of Theorem  \ref{theo1}.
\end{proof}

\section{Proof of the lower bound of Theorem 1 for $\beta \in (1,2]$}
\label{beta12}
We introduce the minimum of the associated random walk up to time $n$: 
\begin{eqnarray}
 M_n &:=& \min_{0\leq k\leq n} S_k \ \ \ \mbox{a.s.} \nonumber 
\end{eqnarray}
Using that $\mathbb{P}(Z_n>0|\Pi) \leq \E[Z_n|\Pi]=\exp(S_n)$  and $\mathbb{P}(Z_n>0|\Pi)$ decreasing a.s., we get
the following classical inequality (see e.g. \cite{birkner05})
\begin{eqnarray}
\mathbb{P}(Z_n>0|\Pi) \leq e^{M_n} \ \ \ \mbox{a.s.} \label{upbon} 
\end{eqnarray}
Actually,  the above estimate  gives the correct exponential decay rate (see e.g. \cite{birkner05}):
\begin{eqnarray}
\gamma &=& -\lim_{n\rightarrow \infty}\frac{1}{n} \log \P(Z_n>0)= - \lim_{n\rightarrow\infty} \frac{1}{n} \log \E\big[e^{M_n}\big]. \nonumber
\end{eqnarray}
In Lemma \ref{gam2}, the above relation is generalized and proved rigorously under assumption $\mathcal{H}(\beta)$.\\
$\qquad$ For the proof of the lower bound of the main theorem, we need the following key bound for the tail probability of $Z_n$.
\begin{theo}
 Under assumption $\mathcal{H}(\beta)$  for some $\beta\in(1,2]$, there exist a constant $0<c<\infty$ and a 
positive nondecreasing and slowly varying function $\Upsilon$ such that for all $k\geq 1$ and $n\geq 1$,
\begin{eqnarray}
 \P(Z_n>k|\Pi) &\leq& c n^{\lceil\beta\rceil}\Upsilon(n^{2/(\beta-1)} e^{-M_n}k) e^{M_n}   (e^{S_n-M_n}/k)^{\beta}  \quad a.s.  \nonumber
\end{eqnarray}
\label{theobond}
\end{theo}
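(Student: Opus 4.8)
The plan is to bound $\P(Z_n > k \mid \Pi)$ by conditioning on the first generation and setting up a recursion in $n$. Write $f_{k,n} := f_{k+1} \circ \cdots \circ f_n$ for the iterated generating function from generation $k$ to $n$, and let $q_{k,n} := \P(Z_n > 0 \mid \Pi, Z_k = 1) = 1 - f_{k,n}(0)$ be the conditional survival probability, which by (\ref{upbon}) satisfies $q_{k,n} \leq e^{M_{k,n}}$ where $M_{k,n} := \min_{k \leq j \leq n} (S_j - S_k)$. The basic idea is that $\{Z_n > k\}$ requires either one individual in generation $1$ to have many offspring (the heavy-tail contribution, controlled by $\mathcal H(\beta)$), or the $Z_1$ individuals to collectively produce many descendants through a more ``typical'' route, which is controlled by a moment/Markov-type estimate on $Z_n$ together with the recursion hypothesis at time $n-1$.

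Concretely, I would first establish the case $n=1$ directly: under $\mathcal H(\beta)$, $\P(Z_1 > k \mid \Pi) = \P(L_1 > k \mid f_1) \leq d \,(m_1 \wedge 1)\, q_{0,1}\, k^{-\beta}$ where $m_1 = e^{X_1}$, and since $m_1 \wedge 1 \leq e^{M_1}$ and $e^{S_1 - M_1} \geq 1$, this fits the claimed bound with any slowly varying $\Upsilon \geq d$ and the factor $n^{\lceil \beta \rceil} \geq 1$. For the inductive step, decompose according to the offspring $L_1 = Z_1$ of the root. The descendants of these $Z_1$ individuals after $n-1$ further generations are i.i.d. (given $\Pi$) copies of $Z_{n-1}$ under the shifted environment $(f_2, \dots, f_n)$. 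I would split $\{Z_n > k\}$ into the event that some individual in generation $1$ alone already produces $> k/2$ descendants in generation $n$ — whose probability is at most $\E[Z_1 \mid \Pi] \cdot \P(Z_{n-1} > k/2 \mid \text{shifted } \Pi) = e^{X_1}\, \P(Z_{n-1} > k/2 \mid \cdots)$, bounded by the induction hypothesis — and the complementary event where every individual contributes $\leq k/2$ but the total exceeds $k$, which forces at least two ``macroscopic'' individuals and yields a factor picking up an extra power of the survival probability, combined with a Chebyshev/truncated-second-moment estimate using $\E[Z_n^2 \mid \Pi]$ or rather the truncated first moment $\E[Z_n \wedge (k/2) \cdot \ind_{Z_n > 0}]$.

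The technically delicate part, and where the slowly varying function $\Upsilon$ and the polynomial prefactor $n^{\lceil \beta \rceil}$ are born, is tracking the constants through the recursion without them blowing up geometrically in $n$. For $\beta \in (1,2]$ one has $\lceil \beta \rceil = 2$ (except the boundary $\beta$ near $1$), so one expects to lose at most a factor linear in $n$ at each of two ``levels'' of the argument — heuristically: one factor of $n$ from summing the per-generation contributions of the ``first big jump can occur at any of $n$ generations'' type, and the slowly varying correction $\Upsilon(n^{2/(\beta-1)} e^{-M_n} k)$ absorbing the sub-polynomial slack that arises because $\beta - 1 < 1$ makes the naive iteration of $x^{-\beta}$ bounds lossy (the argument $n^{2/(\beta-1)}$ is exactly the scale at which the error terms become negligible). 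I would define $\Upsilon$ via a self-improving inequality: posit $\Upsilon$ nondecreasing slowly varying with $\Upsilon(x) \geq \Upsilon(x/2) + (\text{const})$ for $x$ large and $\Upsilon \equiv \text{const}$ for small $x$, check this is consistent (e.g. $\Upsilon(x) \asymp \log_+ x$ up to the cutoff, or a suitable truncation thereof), and verify the recursion closes.

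The main obstacle I anticipate is precisely this bookkeeping: showing the recursion is stable, i.e. that plugging the induction hypothesis for $n-1$ with prefactor $c(n-1)^{\lceil\beta\rceil}\Upsilon(\cdots)$ into the first-generation decomposition reproduces the same form at level $n$ with prefactor $c n^{\lceil\beta\rceil}\Upsilon(\cdots)$ and the \emph{same} constant $c$. This requires carefully exploiting $(1 - 1/n)^{\lceil\beta\rceil} \geq 1 - \lceil\beta\rceil/n$ to create room, using slow variation of $\Upsilon$ to swallow the dyadic losses from repeatedly splitting $k$ into $k/2$, and handling the $e^{M_n}$ versus $e^{M_{n-1}}$ (resp. $e^{S_n}$ vs $e^{S_{n-1}}$) discrepancy — note $M_n = M_{n-1} \wedge (X_1 + M_{1,n})$ shifts by $X_1$, so the environment increment $X_1$ must be reabsorbed into the $(e^{S_n - M_n}/k)^\beta$ and $e^{M_n}$ factors, which works because those factors are designed to be translation-covariant under prepending one environment step. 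The case distinction $m_1 \geq 1$ vs $m_1 < 1$ in $\mathcal H(\beta)$ (the $m \wedge 1$) is what makes the subcritical-step contribution affordable and must be used to keep the heavy-tail term of the right order.
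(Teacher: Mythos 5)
Your proposed recursion in $n$ with a first--generation decomposition is a genuinely different strategy from the paper's, which instead works directly with the generating function $g_{0,n}(s)=\sum_k s^k\P(Z_n>k\,|\,\Pi)$, uses the telescoping identity $(\ref{tele2})$ with $\mu=\beta-1$ to express $g_{0,n}(s)^{-(\beta-1)}$ as $U_n^{\beta-1}+(1-s)^{\beta-1}\sum_{k=0}^{n-1}U_k^{\beta-1}h_{\beta-1,k+1}(f_{k+1,n}(s))$, differentiates once, bounds the summands via Lemma~\ref{bound2}, and evaluates at $s=1-1/k$ using $g_{0,n}'(s)\geq s^k\frac{k^2}{2}\P(Z_n>k\,|\,\Pi)$. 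There, the factor $n^{\lceil\beta\rceil}$ and $\Upsilon$ come from a single sum of $n$ telescoping terms, with no iterated loss.

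In your scheme, however, there is a concrete gap that I do not see how to repair as stated. Each application of your induction hypothesis replaces $k$ by $k/2$ and therefore multiplies the bound by $2^\beta>1$, since $(e^{S_n-S_1-M_{1,n}}/(k/2))^\beta=2^\beta(e^{S_n-S_1-M_{1,n}}/k)^\beta$. Unwinding the recursion from $n$ down to $1$ stacks $n$ such factors, producing $2^{n\beta}$, which is geometric in $n$ and cannot be absorbed by the polynomial prefactor $n^{\lceil\beta\rceil}$ nor by any slowly varying $\Upsilon$ (for the relevant scale $k\sim e^{\theta n}$, slow variation only gives $e^{o(n)}$). Indeed, the self--consistency inequality you would need, roughly $c(n-1)^{\lceil\beta\rceil}2^\beta\leq c\,n^{\lceil\beta\rceil}$, forces $2^\beta(1-\lceil\beta\rceil/n)\leq 1$ for large $n$, which is false. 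The ``self--improving inequality'' you propose for $\Upsilon$ (e.g.\ $\Upsilon(x)\geq\Upsilon(x/2)+\mathrm{const}$) addresses additive accumulation, not multiplicative accumulation, so it does not help here. You would need either a split with thresholds $k(1-\epsilon_n)$ and $k\epsilon_n$ for some summable-in-log sequence $\epsilon_n$ (with all the attendant complications in the ``bulk'' term), or a fundamentally different decomposition---for instance over the time $\tau_n$ at which $S$ attains its minimum, which is closer in spirit to what the paper's telescoping sum actually encodes.

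A second, subsidiary issue: the ``complementary event'' where every generation-$1$ family contributes $\leq k/2$ but the total exceeds $k$ does \emph{not} force two families of order $k$; it could be many small ones. Controlling it requires a truncated second-moment argument on $\sum_i\min(Z_{n-1}^{(i)},k/2)$ conditionally on $Z_1$, followed by an integration over the law of $Z_1=L_1$ which, under $\mathcal H(\beta)$ with $\beta\in(1,2]$, has infinite variance. You gesture at this but do not carry it out, and it is not obvious that it produces a term of the required order $e^{M_n}(e^{S_n-M_n}/k)^\beta$ rather than something weaker. On the other hand, the translation-covariance claim you make---that $e^{X_1}\cdot e^{M_{1,n}}(e^{S_n-S_1-M_{1,n}}/k)^\beta\leq e^{M_n}(e^{S_n-M_n}/k)^\beta$---is correct (split on whether $S_1+M_{1,n}$ is $\leq 0$ or $>0$ and use $\beta>1$ in the latter case), and the base case $n=1$ is fine since $m_1\wedge 1=e^{M_1}$. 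But without fixing the geometric loss and the bulk estimate, the recursion does not close.
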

Let us explain briefly this result. The probability to survive until time $n$ evolves as $\exp(M_n)$, nice environment sequences correspond to large values of $(S_n-M_n)$ and high reproduction of the initial individual gives the last term  $k^{-\beta}$. Conditionally on the environment sequence and the survival of the process, the growth of the process follows $\exp(S_n-M_n)$ : this corresponds to 'best period' in time for the  growth of the process. Thus, this theorem
 essentially says that conditionally on $Z_n>0$,  the tail distribution of $Z_n/e^{S_n-M_n}$ is at most  polynomial  with exponent $-\beta$. 
 
Recalling  that $\Pi=(f_1,f_2,...)$ and $f_n(s) $ is  probability generating function of the offspring distribution of an individual in generation $n-1$, we have 
\be
 f_{0,n}(s):=\sum_{k=0}^{\infty} s^k \P(Z_n=k|\Pi)=\E[s^{Z_n}|\Pi], \quad \mbox{a.s.} \quad (0\leq s\leq 1). \label{genf}
\ee
For the proofs, it is suitable to work with an alternative expression, namely for every $n\geq 1$,
$$g_n(s):=\frac{1-f_n(s)}{1-s} \quad \mbox{a.s.} \quad (0\leq s\leq 1)$$
and 
\begin{eqnarray}
g_{0,n}(s)&:=&\sum_{k=0}^{\infty} s^k \P(Z_n>k|\Pi)= \frac{1-f_{0,n}(s)}{1-s} \quad \mbox{a.s.} \quad (0\leq s\leq 1).\label{defg2} 
\end{eqnarray}
Moreover we need the following auxiliary function defined for every $\mu\in(0,1]$ by
\be
 h_{\mu,k}(s) := \frac{1}{(1-f_k(s))^{\mu}}-\frac{1}{(f'_k(1)(1-s))^{\mu}}=\frac{g_k(1)^{\mu}-g_k(s)^{\mu}}{(g_k(1) g_k(s) (1-s))^{\mu}}  \quad \mbox{a.s.} \quad (0\leq s\leq 1). \label{hdef}
\ee

Finally, we define for all $1\leq k\leq n$,
\Bea 
U_k &:=& \left( f'_1(1) \cdots f'_k(1)\right)^{-1} \ = \ f'_{0,k}(1)^{-1} \ = \ e^{-S_k}, \\
 f_{k,n} &:=& f_{k+1}\circ f_{k+2}\circ\cdots \circ f_{n}, \quad 0\leq k< n; \ f_{n,n}=id \quad \mbox{a.s.}
\Eea

By a telescope summation argument similar to \cite{kersting00}, we have
\begin{eqnarray}
 \frac{1}{(1-f_{0,n}(s))^{\mu}} &=& \frac{U^{\mu}_0}{(1-f_{0,n}(s))^{\mu}} \nonumber \\
&=&\frac{U^{\mu}_n}{(1-f_{n,n}(s))^{\mu}} + \sum_{k=0}^{n-1} \left(\frac{U^{\mu}_{k}}{(1-f_{k,n}(s))^{\mu}} -\frac{U^{\mu}_{k+1}}{(1-f_{k+1,n}(s))^{\mu}}\right) \nonumber \\
&=&\frac{U^{\mu}_n}{(1-s)^{\mu}} + \sum_{k=0}^{n-1} U^{\mu}_k \left(\frac{1}{(1-f_{k+1}(f_{k+1,n}(s)))^{\mu}} -\frac{1}{(f'_{k+1}(1)(1-f_{k+1,n}(s)))^{\mu}}\right) \nonumber \\ 
&=&\frac{U^{\mu}_n}{(1-s)^{\mu}} + \sum_{k=0}^{n-1} U^{\mu}_k h_{\mu,k+1}(f_{k+1,n}(s)), \quad s\geq 0. \label{tele2}
\end{eqnarray}
$\newline$

\begin{proof}[Proof of Theorem \ref{theobond}]
In the same vein as \cite{BK09}, we are   obtaining an upper bound for $\P(Z_n>z|\Pi)$ from the divergence of 
$g_{0,n}'(s)=\sum_{j=0}^{\infty} j \P(Z_n>j|\Pi) s^{j-1}$  as $s\rightarrow 1$. In that purpose, we use
 (\ref{tele2}) for $\mu=\beta-1$ and get 
\begin{eqnarray}
 g_{0,n}(s) &=& \Big(U_n^{\beta-1} + (1-s)^{\beta-1}\sum_{k=0}^{n-1} U^{\beta-1}_k h_{\beta-1,k+1}(f_{k+1,n}(s))\Big)^{-1/(\beta-1)} \quad  (0\leq s\leq 1) \quad \t{a.s.} \nonumber
\end{eqnarray}
Then we calculate the first derivative of $g_{0,n}$: 
\begin{eqnarray}
 &&g'_{0,n} (s)  \nonumber \\
&=& -(\beta-1)^{-1} \Big(U_n^{\beta-1} + (1-s)^{\beta-1}\sum_{k=0}^{n-1} U^{\beta-1}_k h_{\beta-1,k+1}(f_{k+1,n}(s))\Big)^{-1-1/(\beta-1)} \nonumber\\
&& \qquad 
\times  \Big(-(\beta-1) (1-s)^{\beta-2}\sum_{k=0}^{n-1} U^{\beta-1}_k h_{\beta-1,k+1}(f_{k+1,n}(s)) \nonumber \\ && \qquad \qquad 
+(1-s)^{\beta-1}\sum_{k=0}^{n-1} U^{\beta-1}_k  h'_{\beta-1,k+1}(f_{k+1,n}(s))f'_{k+1,n}(s)\Big) \nonumber \\
&\leq &\frac{\sum_{k=0}^{n-1} U^{\beta-1}_k \Big(h_{\beta-1,k+1}(f_{k+1,n}(s))-(\beta-1)^{-1}  h'_{\beta-1,k+1}(f_{k+1,n}(s))f'_{k+1,n}(s)(1-s)\Big)}{U_n^{\beta} (1-s)^{2-\beta}} \quad \label{gder}
\end{eqnarray}

Now Lemma \ref{bound2} in the appendix ensures that there exists  $c>0$ such that for every $s\in[0,1)$, 
\begin{eqnarray}
 h_{\beta-1,k}(s) &\leq& c \Upsilon(1/(1-s)), \label{boundhtilde}  \\
 -h'_{\beta-1,k}(s) &\leq& c \Upsilon(1/(1-s)) /(1-s) \quad \mbox{a.s.} \label{boundhptilde}
\end{eqnarray}
Moreover, using (\ref{tele2}), Lemma \ref{bound2} in the appendix for $0<\mu<\beta-1$ and
$U_k\leq \exp(-M_n)$ for every $0\leq k\leq n$, there exists a $c\geq 1$ such that for every $s\in[0,1)$,
\begin{eqnarray}
 \frac{1}{(1-f_{k+1,n}(s))^{\mu}} &\leq& \frac{e^{-\mu M_n}}{(1-s)^\mu} + n \ c \ e^{-\mu M_n}\leq ce^{-\mu M_n}(n+1)/(1-s)^{\mu} \quad  \mbox{a.s.} \nonumber
\end{eqnarray}
Combining this inequality with (\ref{boundhtilde}) ensures that there exists  $c>0$ such that
\begin{eqnarray}
 h_{\beta-1,k+1}(f_{k+1,n}(s)) &\leq&  \ c \Upsilon\big( (n+1)^{1/\mu} e^{-M_n} (1-s)^{-1}\big) \quad (0\leq s<1) \quad \mbox{a.s.} \nonumber 
\end{eqnarray}
Moreover,  $f_{k+1,n}(s) \leq 1-f'_{k+1,n}(s) (1-s)$ by convexity of $f_{k+1,n}$ and (\ref{boundhptilde}) ensures that
\begin{eqnarray}
-h_{\beta-1,k+1}'(f_{k+1,n}(s))f'_{k+1,n}(s)(1-s) & \leq & c \ f'_{k+1,n}(s)(1-s)  \Upsilon\big(1/(1-f_{k+1,n}(s))\big) \frac{1}{1-f_{k+1,n}(s)} \nonumber \\
&\leq&   c \ \Upsilon\big( (n+1)^{1/\mu} e^{-M_n} /(1-s)\big) \quad (0\leq s< 1) \quad \mbox{a.s.} \nonumber 
\end{eqnarray}
Using the two last  estimates with $\mu=(\beta-1)/2$ together in (\ref{gder}) yields
\begin{eqnarray}
 g_{0,n}'(s) &\leq& c\frac{\  n\ e^{-(\beta-1) M_n} \Upsilon\big( (n+1)^{2/(\beta-1)} e^{-M_n} (1-s)^{-1}\big)}{U_n^{\beta} (1-s)^{2-\beta}}\quad (0\leq s\leq 1) \quad \mbox{a.s.} \nonumber
\end{eqnarray} 
$\qquad$ Moreover for all $k\geq 1$ and $s\in[0,1]$,
\begin{eqnarray}
g'_{0,n}(s) &\geq& \sum_{j=k/2}^{k} j \P(Z_n>j|\Pi) s^{j-1} \nonumber \\
&\geq& s^{k} \frac{k^2}{2} \P(Z_n>k|\Pi).\label{derivativeg}
\end{eqnarray}
By letting  $s=1-1/k$ in the two last inequalities, we get
\begin{eqnarray}
\Big(1-\frac{1}{k}\Big)^k \frac{k^2}{2} \P(Z_n>k|\Pi) &\leq& c \frac{n \  e^{-(\beta-1) M_n} k^{2-\beta} \ \Upsilon\big(k(n+1)^{2/(\beta-1)}e^{-M_n}\big)}{U_n^{\beta}},\nonumber
\end{eqnarray}
 which ends up the proof since $U_n=\exp(-S_n)$.
 \end{proof}
$\newline$

For the proof of the lower bound of the Theorem \ref{theo1},  we also need the  following characterization of the 'survival cost' $\gamma$:
\begin{lemma}
Under assumption $\mathcal{H}(\beta)$, for all $\theta\geq 0$, $b>0$ and $\Upsilon$  positive nondecreasing and slowly varying at infinity, 
\begin{eqnarray}
\gamma &=& - \lim_{n\rightarrow\infty} \frac{1}{n} \log \E\big[\Upsilon(n^b e^{\theta n} e^{-M_n})e^{M_n}\big] \ . \nonumber
\end{eqnarray}
\label{gam2}
\end{lemma}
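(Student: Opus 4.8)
The plan is to prove the two inequalities $\gamma \le -\limsup$ and $\gamma \ge -\liminf$ separately, so that the limit exists and equals $\gamma$. The upper direction is easy: since $\Upsilon$ is nondecreasing and $n^b e^{\theta n} \ge 1$ for $n$ large, we have $\Upsilon(n^b e^{\theta n} e^{-M_n}) \ge \Upsilon(e^{-M_n}) \ge \Upsilon(1) > 0$ (recall $M_n \le 0$ a.s. since $S_0 = 0$), hence $\E[\Upsilon(n^b e^{\theta n}e^{-M_n}) e^{M_n}] \ge \Upsilon(1)\, \E[e^{M_n}]$, and by the already-quoted identity $\gamma = -\lim \frac1n \log \E[e^{M_n}]$ this gives $-\limsup_n \frac1n \log \E[\Upsilon(\cdots)e^{M_n}] \le \gamma$.

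For the lower direction I would exploit the slow variation of $\Upsilon$ to show the weight $\Upsilon(n^b e^{\theta n} e^{-M_n})$ is subexponential, so it cannot change the exponential rate of $\E[e^{M_n}]$. Concretely, by the Potter bounds (e.g. \cite{BGT}) for any $\varepsilon > 0$ there is $C_\varepsilon < \infty$ with $\Upsilon(x) \le C_\varepsilon x^\varepsilon$ for all $x \ge 1$, and since $n^b e^{\theta n} e^{-M_n} \ge 1$ a.s. we obtain
$$
\E\big[\Upsilon(n^b e^{\theta n} e^{-M_n}) e^{M_n}\big] \le C_\varepsilon\, n^{b\varepsilon} e^{\theta \varepsilon n}\, \E\big[e^{(1-\varepsilon)M_n}\big].
$$
Now I need that $-\frac1n \log \E[e^{(1-\varepsilon)M_n}] \to \gamma$ as $n\to\infty$ and then $\to \gamma$ again as $\varepsilon \to 0$; combined with the factor $e^{\theta\varepsilon n}$ this yields $-\liminf_n \frac1n \log \E[\Upsilon(\cdots)e^{M_n}] \ge \gamma - \theta\varepsilon - \lim_\varepsilon(\cdots)$, and letting $\varepsilon \to 0$ finishes the proof.

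So the real work reduces to the claim: for every $\lambda \in (0,1]$, $-\lim_n \frac1n \log \E[e^{\lambda M_n}] = \gamma$. One inclusion is immediate since $e^{\lambda M_n} \ge e^{M_n}$ (as $M_n \le 0$), giving the rate $\le \gamma$. For the reverse, the natural route is to relate $\E[e^{\lambda M_n}]$ back to the survival probability: using assumption $\mathcal H(\beta)$ one has a lower bound of the form $\P(Z_n > 0 \mid \Pi) \ge c\, e^{M_n}/(\text{something subexponential})$ — indeed, the standard second-moment / Paley–Zygmund argument under a moment condition like $\mathcal H(\beta)$ (this is exactly the kind of estimate used for Theorem \ref{theobond} and in \cite{GuivL}, \cite{birkner05}) gives $\P(Z_n>0\mid\Pi) \ge c\, e^{M_n} n^{-a}$ a.s. for suitable $a$ when $e^{S_n - M_n}$ is not too large, and one controls the complementary environment event by a union bound. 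Then $\E[e^{\lambda M_n}]$ can be sandwiched between $\E[e^{M_n}]$ and a polynomial multiple of $\E[\P(Z_n>0\mid\Pi)] = \P(Z_n>0)$ after splitting on whether $e^{M_n}$ is exponentially small or not, recovering the rate $\gamma$ on both ends.

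The main obstacle I anticipate is precisely this last step — turning the a.s. bound $\P(Z_n>0\mid\Pi) \gtrsim e^{M_n}\cdot(\text{subexp})$ into a clean statement that survives taking expectations, since the subexponential correction depends on $S_n - M_n$ which is itself random and one must check its contribution is negligible on the exponential scale; this is where $\mathcal H(\beta)$ (uniform polynomial tails, hence uniformly bounded moments of order $<\beta$, in particular a second moment since $\beta>1$) is genuinely needed. Once that estimate is in hand, the passage through Potter's bounds and the two limits in $\varepsilon$ is routine and essentially the same manipulation as in \cite{BK09}.
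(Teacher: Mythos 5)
Your overall architecture matches the paper's (the nondecreasing property of $\Upsilon$ for one direction, a Potter bound $\Upsilon(x)\le C_\varepsilon x^\varepsilon$ for the other, reducing to the study of $\E[e^{tM_n}]$), and you correctly identify that a lower bound of the type $\P(Z_n>0\mid\Pi)\ge c\,n^{-a}e^{M_n}$ — which the paper extracts from the telescoping identity (\ref{tele2}) together with Lemma \ref{bound2} under $\mathcal{H}(\beta)$ — is what makes $\gamma=-\lim\frac1n\log\E[e^{M_n}]$ rigorous. The gap is in the reduction step. You state that ``the real work reduces to the claim: for every $\lambda\in(0,1]$, $-\lim_n\frac1n\log\E[e^{\lambda M_n}]=\gamma$.'' This claim is \emph{false} whenever $\gamma>0$. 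Writing $\xi(t):=-\lim_n\frac1n\log\E[e^{tM_n}]$, Jensen's inequality ($x\mapsto x^t$ concave for $t\in[0,1]$) gives $\E[e^{tM_n}]\le(\E[e^{M_n}])^t$, hence $\xi(t)\ge t\gamma$, while $e^{tM_n}\ge e^{M_n}$ gives $\xi(t)\le\gamma$; both bounds are sharp, and for the degenerate environment $X\equiv a<0$ one has $M_n=na$ and $\xi(t)=-ta=t\gamma$ exactly, so $\xi(\lambda)<\gamma$ for every $\lambda<1$. What is actually needed — and all that is needed — is the one-sided continuity $\xi(1-\varepsilon)\to\xi(1)=\gamma$ as $\varepsilon\to0^+$. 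The paper gets this from the fact that $\xi$ is finite and convex (by H\"older's inequality on $t\mapsto\E[e^{tM_n}]$; the paper cites Lemma V.4 of \cite{hollander}), hence continuous. Your proposed sandwich for the false claim also cannot work as stated: you want to bound $\E[e^{\lambda M_n}]$ \emph{above} by a polynomial multiple of $\P(Z_n>0)$, but the available pointwise inequality $\P(Z_n>0\mid\Pi)\le e^{M_n}\le e^{\lambda M_n}$ goes the wrong way for that purpose. Replacing the false claim by the convexity argument for $\xi$ closes the gap and recovers the paper's proof.
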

\begin{proof}[Proof of Lemma \ref{gam2}]
First let $\Upsilon=1$. We use (\ref{tele2}) with some $0<\mu<\beta-1$ 
and  (\ref{eqth}) ensures that
\begin{eqnarray}
  \P(Z_n>0|\Pi) &\geq& \frac{1}{(e^{-\mu S_n} + \sum_{k=0}^{n-1} e^{-\mu S_k} h_{\mu,k+1}(f_{k+1,n}(1)))^{1/\mu}} \nonumber \\
&\geq&  c^{-1} n^{-1/\mu}e^{M_n}. \nonumber  
\end{eqnarray}
For the upper bound, we use (\ref{upbon}) and get 
\begin{eqnarray}
\gamma &=& -\lim_{n\rightarrow \infty}\frac{1}{n} \log \P(Z_n>0)= - \lim_{n\rightarrow\infty} \frac{1}{n} \log \E\big[e^{M_n}\big]. \nonumber
\end{eqnarray}
As $\Upsilon$ is nondecreasing, 
\be
\gamma \geq \limsup_{n\rightarrow\infty} -\frac{1}{n} \log \mathbb{E}[\Upsilon(n^b e^{\theta n} e^{-M_n}) e^{M_n}]. \nonumber
\ee
For the converse inequality, we use that $\E\big[e^{t M_n}\big]$ is nonincreasing in $n$ to define
\begin{eqnarray}
 \xi(t) &:=& -\lim_{n\rightarrow\infty} \frac{1}{n} \log\E\big[e^{t M_n}\big] \ . \nonumber
\end{eqnarray}
We note that  $\xi(t)\geq 0$ and by Lemma V.4 in \cite{hollander}, $\xi(t)$ is finite and
convex. So $\xi$ is continuous. \\
Now by properties of slowly varying sequences (see \cite{BGT}, proposition 1.3.6, page 16), for any $\delta>0$, $x^{-\delta}\Upsilon(x)\rightarrow 0$ as $x\rightarrow\infty$ (see appendix) and 
 $$-\lim_{n\rightarrow\infty} \frac{1}{n} \log \E\big[\Upsilon(n^b e^{\theta n}e^{-M_n})e^{M_n}\big] \geq -\delta \theta-\lim_{n\rightarrow\infty} \frac{1}{n} \log \E\big[e^{(1+\delta)M_n}\big].$$ 
Letting $\delta\rightarrow0$ and using continuity of $\chi$, this ends up the proof. 
\end{proof}
$\newline$
\begin{proof}[Proof of the lower bound of Theorem 1] First, we recall the following classical large deviation inequality:
\be
\label{ineqLD}
\P(S_n\geq \theta n)\leq e^{-\Lambda(\theta)n}
\ee
and we define the first time $\tau_n$ when the random walk $(S_i : i\leq n)$ reaches its minimum 
value on $[0,n]$:
$$\tau_n:=\inf\{0\leq k \leq n: \ S_k=M_n\}.$$

We decompose the probability of having an extraordinarily large population according to $S_n-M_n$.
\begin{eqnarray}
\label{decomp}
\P(Z_n\geq e^{\theta n}) &=& \P(Z_n\geq e^{\theta n}, S_n-M_n\geq \theta n)+ \E[\P(Z_n\geq e^{\theta n}|\Pi); S_n-M_n<\theta n]. 
\end{eqnarray}
$\qquad$ The asymptotic of the first term can be found using (\ref{ineqLD}) (see \cite{BK09}): 
\Bea
\P(Z_n\geq e^{\theta n},\  S_n-M_n\geq \theta n) &\leq &\sum_{i=1}^n \P(Z_i>0, \P(S_n-S_{i}\geq \theta n)\\
&\leq &\sum_{i=1}^n \P(Z_i>0)\exp(-(n-i)\Lambda(\theta n/(n-i))).
\Eea
This ensures that
\be
 \liminf_{n\rightarrow\infty} -\frac{1}{n} \log \P(Z_n\geq e^{\theta n}, S_n-M_n\geq \theta n)\geq  \chi(\theta), \label{0806}
\ee 
where
\begin{eqnarray} 
 \chi(\theta) &=& \inf_{0<t\leq 1} \big\{t\gamma + (1-t) \Lambda(\theta/(1-t))\big\}. \nonumber
\end{eqnarray}
$\qquad$ For the second term, we use Theorem \ref{theobond} and the Markov property for $(S_n: n\geq 0)$:
\begin{eqnarray}
&& \E[\P(Z_n\geq e^{\theta n}|\Pi); S_n-M_n<\theta n] \nonumber\\
&& \quad \leq c \ n^{\lceil \beta \rceil} \ \E\Big[\Upsilon(n^{2/(\beta-1)} e^{-M_n} e^{\theta n})
e^{-M_n}e^{\beta(S_n-M_n-\theta n)}; S_n-M_n<\theta n\Big] \nonumber \\
&& \quad =c \ n^{\lceil \beta \rceil}\ \sum_{k=0}^n \E\Big[\Upsilon(n^{2/(\beta-1)} e^{-S_k} e^{\theta n})
e^{S_k}e^{\beta (S_n-S_k-\theta n)}; S_n-M_n<\theta n, \tau_n=k\Big] \nonumber \\
&& \quad \leq c \ n^{\lceil \beta\rceil} \ \sum_{k=0}^{n} \E[\Upsilon(n^{2/(\beta-1)} e^{-S_k} e^{\theta n})e^{S_k};\tau_k=k] \E[e^{-\beta(\theta n-S_{n-k})};S_{n-k}<\theta n, M_{n-k}\geq 0] \nonumber 
\end{eqnarray}
Let $\epsilon=1/n^2$ and $m_{\epsilon}=\lceil \theta/\epsilon\rceil$. Using that
$$\E[\Upsilon(n^{2/(\beta-1)} e^{-S_k} e^{\theta n})e^{S_k};\tau_k=k] =\E[\Upsilon(n^{2/(\beta-1)} e^{-M_k} e^{\theta n})e^{M_k},\tau_k=k] \leq \E[\Upsilon(n^{2/(\beta-1)} e^{-M_k} e^{\theta n})e^{M_k}]$$
and we deduce from  (\ref{ineqLD}) that 
\begin{eqnarray}
  &&\E[\P(Z_n\geq e^{\theta n}|\Pi); S_n-M_n<\theta n] \nonumber \\
&\leq& c \ n^{\lceil \beta\rceil} \ \sum_{k=1}^{n} \E[\Upsilon(n^{2/(\beta-1)} e^{-M_k} e^{\theta n})e^{M_k}]  \sum_{j=0}^{m_{\epsilon}} e^{-\beta(\theta-(j+1)\epsilon)n} \P\big(S_{n-k}\in [n j \epsilon, n (j+1) \epsilon), M_{n-k}\geq 0\big) \nonumber \\
&\leq & c \ n^{\lceil \beta\rceil}  \ \sum_{k=1}^{n} \E[\Upsilon(n^{2/(\beta-1)} e^{-M_k} e^{\theta n})e^{M_k}]  \sum_{j=0}^{m_{\epsilon}} e^{-\beta(\theta-(j+1)\epsilon)n} e^{-\Lambda(j\epsilon n/(n-k))(n-k)} \nonumber \\
&\leq& c \ \theta \ n^5 \  \sup_{0< t\leq 1, 0\leq s \leq \theta} \left\{\E\Big[\Upsilon(n^{2/(\beta-1)} e^{-M_{\lfloor t n\rfloor}} e^{\theta n})e^{M_{\lfloor t n\rfloor}}\Big]\cdot e^{-(\beta s+(1-t) \Lambda((\theta-s)/(1-t)))n}\right\}.\nonumber
\end{eqnarray}
Together with Lemma \ref{gam2}, this yields
\begin{eqnarray}
 \liminf_{n\rightarrow\infty}- \frac{1}{n}\log \E[\P(Z_n\geq e^{\theta n}|\Pi); S_n-M_n<\theta n]&\geq& \psi(\theta), \nonumber
\end{eqnarray}
where
\begin{eqnarray}
 \psi( \theta) &=& \inf_{0< t\leq 1, 0\leq s \leq \theta} \Big\{\gamma t + \beta s + (1-t)\Lambda((\theta-s)/(1-t))\Big\}.\nonumber 
\end{eqnarray}
Combining this inequality with (\ref{decomp}) and (\ref{0806}) gives
\be
 \liminf_{n\rightarrow\infty}-\frac{1}{n} \log \P(Z_n\geq e^{\theta n})\geq \min\{\chi(\theta); \psi(\theta)\}.\nonumber 
\ee
Adding that $\psi(\theta)\leq \chi(\theta)$ since the infimum is considered on a larger set for $\psi$ than for $\chi$, we get 
\be
 \limsup_{n\rightarrow\infty}-\frac{1}{n} \log \P(Z_n\geq e^{\theta n})\geq \psi(\theta), \nonumber
\ee
which proves the lower bound of Theorem $1$. 
\end{proof}

\section{Adaptation of the proof of the lower bound  for $\beta>2$}
\label{beta2}
$\qquad$ First, Lemma \ref{gam2} still holds for $\beta>2$ by following the same proof. Indeed,  using  
 (\ref{tele2}) for  $\mu=1$ together with Lemma \ref{bound2} given in the appendix ensures that
\begin{eqnarray}
  \P(Z_n>0|\Pi)  =  1-f_{0,n}(0)&\geq& \frac{1}{e^{-S_n} + \sum_{k=0}^{n-1} e^{-S_k} h_{k+1}(f_{k+1,n}(0))} \geq n^{-1} \ c^{-1} e^{M_n}. \nonumber  
\end{eqnarray}

The main difficulty is to obtain an equivalent of Theorem \ref{theobond}. For this, we need
 to calculate higher order derivatives of $g_{0,n}$ and the upper bound on the  tail probability of $Z_n$ contains an additional term:
\begin{theo}
 Under assumption $\mathcal{H}(\beta)$ for some $\beta>2$, there are a constant $0<c<\infty$ and a positive nondecreasing slowly varying function $\Upsilon$ such that for every $k\geq 1$,
\begin{align*}
 P(Z_n>k|\Pi) &\leq& c\ e^{S_n} n^\beta  \Upsilon(n^{2}e^{-M_n} k) \max\big\{k^{-\beta} e^{(\beta-1)(S_n-M_n)};k^{-\lceil\beta\rceil-1} e^{\lceil\beta\rceil(S_n-M_n)}\big\}\quad a.s.  \nonumber
\end{align*}
\label{theobond2}
\end{theo}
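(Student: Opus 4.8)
The plan is to rerun the argument of Section~\ref{beta12} with two modifications forced by $\beta>2$. First, the telescopic identity (\ref{tele2}) must be applied with $\mu=1$ instead of $\mu=\beta-1$, since the latter is now $>1$ and inadmissible in (\ref{hdef}) and in Lemma~\ref{bound2}. Second, the first derivative of $g_{0,n}$ must be replaced by its $\lceil\beta\rceil$-th derivative, because for $\beta>2$ the first derivative $g'_{0,n}(s)$ stays bounded as $s\uparrow1$ (the divergence $(1-s)^{\beta-2}$ of the case $\beta\le2$ has become an $o(1)$), so it no longer controls the tail of $Z_n$. As already observed, Lemma~\ref{gam2} carries over verbatim; so the real work is Theorem~\ref{theobond2}, and granting it the lower bound of Theorem~\ref{theo1} follows as in Section~\ref{beta12}.

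\textbf{Proof of Theorem~\ref{theobond2}.} From (\ref{tele2}) with $\mu=1$ together with (\ref{defg2}) one gets the closed form
$$g_{0,n}(s)=\Big(U_n+(1-s)\sum_{k=0}^{n-1}U_k\,h_{1,k+1}(f_{k+1,n}(s))\Big)^{-1}\qquad(0\le s<1)\quad\t{a.s.}$$
I would then expand $g_{0,n}^{(\lceil\beta\rceil)}(s)$ by the generalized Leibniz rule and Fa\`a di Bruno's formula, following the three places at which a derivative can act: the explicit prefactor $1-s$, the inner argument of each composition $h_{1,k+1}\circ f_{k+1,n}$, and the additional copies of $g_{0,n}$ produced by differentiating the reciprocal. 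The (finitely many, $n$-independent in number) resulting terms are estimated using: the higher-order form of Lemma~\ref{bound2}, $|h_{\mu,k}^{(j)}(s)|\leq_c\Upsilon(1/(1-s))\,(1-s)^{-j}$ for $1\le j\le\lceil\beta\rceil$; the convexity bound $f'_{k+1,n}(s)(1-s)\le 1-f_{k+1,n}(s)$ and its higher-order analogues for $f_{k+1,n}^{(i)}(s)$, $i\le\lceil\beta\rceil$; the inequality $1/(1-f_{k+1,n}(s))^{\mu}\leq_c n\,e^{-\mu M_n}(1-s)^{-\mu}$ obtained from (\ref{tele2}) for a small $\mu$, as in Section~\ref{beta12}; and $U_k\le e^{-M_n}$. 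The maximum in the statement comes from the dichotomy of whether the denominator $U_n+(1-s)\sum_kU_kh_{1,k+1}(f_{k+1,n}(s))$ is comparable to $U_n=e^{-S_n}$ or to its second summand, $\asymp(1-s)\,n\,e^{-M_n}\Upsilon(\cdot)$. Collecting the bounds yields
$$g_{0,n}^{(\lceil\beta\rceil)}(s)\leq_c n^{\beta}e^{S_n}\,\Upsilon\!\big(n^{2}e^{-M_n}(1-s)^{-1}\big)\,\max\!\Big\{(1-s)^{\beta-\lceil\beta\rceil-1}e^{(\beta-1)(S_n-M_n)};\ e^{\lceil\beta\rceil(S_n-M_n)}\Big\}.$$
On the other hand, exactly as in (\ref{derivativeg}), $g_{0,n}^{(\lceil\beta\rceil)}(s)\geq_c k^{\lceil\beta\rceil+1}\P(Z_n>k|\Pi)\,s^{k}$ for all large $k$ (the small values of $k$ being trivial). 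Taking $s=1-1/k$ in the two displays and recalling $U_n=e^{-S_n}$ gives Theorem~\ref{theobond2}.

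\textbf{Lower bound of Theorem~\ref{theo1} for $\beta>2$.} In the decomposition (\ref{decomp}) the first term is treated exactly as in Section~\ref{beta12}, contributing the rate $\chi(\theta)\geq\psi(\theta)$. For the second term, inserting Theorem~\ref{theobond2} splits the bound into two sums according to the maximum. The sum carrying $k^{-\beta}e^{(\beta-1)(S_n-M_n)}$ is the one already handled in Section~\ref{beta12} and, via Lemma~\ref{gam2} and (\ref{ineqLD}), contributes the rate $\psi(\theta)$. The sum carrying $k^{-\lceil\beta\rceil-1}e^{\lceil\beta\rceil(S_n-M_n)}$ is dealt with in the same manner: applying the Markov property at the time $\tau_n$ of the minimum and discretizing $S_n-M_n$ as before, one obtains a rate of the form
$$\inf_{t\in(0,1],\,u\in[0,\theta]}\Big\{\gamma t+(\lceil\beta\rceil+1)u+(1-t)\Lambda\big((\theta-u)/(1-t)\big)\Big\},$$
which is $\geq\psi(\theta)$ since $\lceil\beta\rceil+1>\beta$. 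Hence the extra term does not degrade the estimate, and one recovers $\liminf_{n\to\infty}-\tfrac1n\log\P(Z_n\geq e^{\theta n})\geq\psi(\theta)$, which, combined with the upper bound of Section~3, completes the proof of Theorem~\ref{theo1}.

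\textbf{Main obstacle.} The delicate point is the control of $g_{0,n}^{(\lceil\beta\rceil)}$: the Fa\`a di Bruno expansion of the reciprocal of a sum of $\lceil\beta\rceil$-fold nested compositions produces many terms, and one must show that all of them are dominated by the two extreme contributions appearing in the maximum --- the ``chain'' in which every derivative lands on the prefactor and on low-order derivatives of the $h_{\mu,k}$, and the one in which all $\lceil\beta\rceil$ derivatives fall on a single iterate $f_{k+1,n}$, producing $f_{k+1,n}^{(\lceil\beta\rceil)}(s)$, whose blow-up as $s\uparrow1$ is exactly the analytic manifestation of the possibly infinite $\lceil\beta\rceil$-th moment of the offspring distribution. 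Establishing the uniform (in the environment) higher-order bounds on $f_{k+1,n}^{(i)}$ and $h_{\mu,k}^{(i)}$ for $2\le i\le\lceil\beta\rceil$ --- the higher-order analogue of Lemma~\ref{bound2} --- is the technically heaviest step.
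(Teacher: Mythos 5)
The overall strategy is the right one and matches the paper's: apply the telescope identity (\ref{tele2}) with $\mu=1$, differentiate $g_{0,n}$ to a high order via a Fa\`a di Bruno expansion, bound the resulting terms, and conclude via $g_{0,n}^{(l)}(s)\geq_c k^{l+1}s^k\,\P(Z_n>k\,|\,\Pi)$. However, two concrete problems prevent this from being a proof.

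First, your key auxiliary estimate $|h_{\mu,k}^{(j)}(s)|\leq_c\Upsilon(1/(1-s))\,(1-s)^{-j}$ for $1\le j\le\lceil\beta\rceil$ is both unproven and, more importantly, far too crude to yield the theorem. The sharp bounds are those of Lemma~\ref{lem3}: $h^{(j)}$ is \emph{bounded} for $j<\beta-2$, and for $j=\lceil\beta\rceil-2$ and $j=\lceil\beta\rceil-1$ it diverges only like $(1-s)^{-(\lceil\beta\rceil-\beta)}$ and $(1-s)^{-1-(\lceil\beta\rceil-\beta)}$ respectively, with $\lceil\beta\rceil-\beta\in[0,1)$. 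Your $(1-s)^{-j}$ is substantially more singular (e.g.\ for $\beta=5.5$, you claim $(1-s)^{-5}$ where the truth is $(1-s)^{-1.5}$). Feeding the crude bound into the Fa\`a di Bruno expansion would make essentially every term look maximally divergent and, after setting $s=1-1/k$, would produce spurious positive powers of $k$ that destroy the sharp $k^{-\beta}$ decay; you would not recover the claimed bound on $g_{0,n}^{(\lceil\beta\rceil)}$, and hence not the theorem.

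Second, you differentiate to order $\lceil\beta\rceil$, whereas the paper stops at order $l=\lceil\beta\rceil-1$, and this off-by-one is not cosmetic. With $l=\lceil\beta\rceil-1$, the Fa\`a di Bruno expansion only ever involves $f_{k+1,n}^{(i)}$ up to $i=\lceil\beta\rceil-1$, and these are a.s.\ finite at $s=1$ (the $(\lceil\beta\rceil-1)$-th factorial moments exist since $\lceil\beta\rceil-1<\beta$); the induction of Lemma~\ref{lemdif1} then controls $H^{(l)}$ and $f_{0,n}^{(l)}(1)$ simultaneously without any genuine blow-up in the $f$-derivatives. With $l=\lceil\beta\rceil$, as you yourself note, the term $h^{(\lceil\beta\rceil)}(f_{k+1,n}(s))\,(f_{k+1,n}'(s))^{\lceil\beta\rceil}$ and the term involving $f_{k+1,n}^{(\lceil\beta\rceil)}(s)$ both carry a possibly infinite quantity, and you give no bound on $h^{(\lceil\beta\rceil)}$ (Lemma~\ref{lem3} stops at $\lceil\beta\rceil-1$). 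You flag this as ``the technically heaviest step'' but do not resolve it; in the paper it is precisely avoided by choosing $l=\lceil\beta\rceil-1$, where the divergence is concentrated in $H^{(\lceil\beta\rceil-1)}$ and controlled by the refined (fractional-power) bounds of Lemma~\ref{lem3} rather than by anything as large as $(1-s)^{-j}$.

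Your final step — feeding the two terms of the maximum through the decomposition (\ref{decomp}) and using $\lceil\beta\rceil+1>\beta$ to conclude $\psi_{\gamma,\lceil\beta\rceil+1,\Lambda}\geq\psi_{\gamma,\beta,\Lambda}$ — is correct and coincides with the paper's conclusion of Section~\ref{beta2}.
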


For the proof, we use  the functions 
$$
 h_{k}(s) = \frac{1}{(1-f_k(s))}-\frac{1}{f'_k(1)(1-s)}=\frac{g_k(1)-g_k(s)}{g_k(1) g_k(s) (1-s)}  \quad \mbox{a.s.} \quad (0\leq s< 1) 
$$
and
\be
\label{defH}
     H(s)=\sum_{k=0}^{n-1} U_k h_{k+1}(f_{k+1,n}(s)) \quad \mbox{a.s.} \quad (0\leq s< 1). 
\ee
Then  (\ref{tele2}) with $\mu=1$ gives
\begin{eqnarray}
 g_{0,n}(s)^{-1} &=&\frac{1-s}{1-f_{0,n}(s)} = U_n+ (1-s) H(s) \quad \mbox{a.s.} \quad (0\leq s< 1) \nonumber
\end{eqnarray}
and calculating the $l$-th derivative of the above equation, we get for all $l\geq 1$ and $s\in[0,1)$,
\begin{eqnarray}
 \frac{d^l}{ds^l} g_{0,n}(s)^{-1} &=& (1-s) H^{(l)}(s) - l H^{(l-1)}(s)\quad \mbox{a.s.} \quad (0\leq s< 1).\label{23061}
\end{eqnarray}
The rest of the section is organized as follows. First, we prove the following technical lemma which gives useful bounds for power generating series. Then we derive  Theorem
\ref{theobond2}. Finally the main lines of the proof of the lower bound of Theorem \ref{theo1} for $\beta>2$ are explained (following the proof for $\beta\in(1,2]$). For simplicity of notation, we introduce $\leq_c$ which means that the inequality is fulfilled up to a multiplicative constant $c$ which does not depend
on $s$, $k$, $l$ or $\omega$.  
\begin{lemma}
 Under assumption $\mathcal{H}(\beta)$, for every $l\leq \lceil\beta\rceil-1$,
\begin{eqnarray}
 f^{(l)}_{0,n}(1) &\leq_c& n^{l-1} \ e^{S_n} e^{(l-1) (S_n-M_n)} \quad  a.s. \label{2406}
\end{eqnarray}
Moreover the following  estimates   hold a.s. for every $s\in [0,1)$ respectively
for  $l<\lceil\beta\rceil-2$, $l=\lceil\beta\rceil-2$ 	and $l=\lceil\beta\rceil-1$
\begin{eqnarray}
 |H^{(l)}(s)| 
&\leq_c& n^{l}\ e^{l(S_n-M_n)}  \label{2306} \\
 |H^{(l)}(s)| 
&\leq_c&  n^{l} e^{(\lceil\beta\rceil-2)(S_n-M_n)}\nonumber \\
&&+n\Upsilon(n^2 e^{-M_n} (1-s)^{-1})(1-s)^{-(\lceil\beta\rceil-\beta)}e^{-S_n} e^{(\beta-1)(S_n-M_n)} \label{1101} \\
 |H^{(l)}(s)| 
&\leq_c& n^{l} e^{(\lceil\beta\rceil-1)(S_n-M_n)}+n^2\Upsilon(n^2 e^{-M_n} (1-s)^{-1})\ e^{-S_n} e^{\beta(S_n-M_n)} (1-s)^{-(\lceil\beta\rceil-\beta)}\nonumber \\
&&\quad +n\Upsilon(n^2 e^{-M_n} (1-s)^{-1})\ e^{-S_n} e^{(\beta-1)(S_n-M_n)} (1-s)^{-1-(\lceil\beta\rceil-\beta)}. \label{11012}
\end{eqnarray}
\label{lemdif1}
\end{lemma}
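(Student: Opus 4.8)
## Proof strategy for Lemma \ref{lemdif1}

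The plan is to prove all four estimates by a single induction on $l$, exploiting the recursive structure of $H$ through the telescoping identity (\ref{tele2}). The starting point is the bound (\ref{2406}) on the derivatives of $f_{0,n}$. For $l=0$ this is just $f_{0,n}(1)=1$, and for $l=1$ it is $f'_{0,n}(1)=e^{S_n}$. For the inductive step I would differentiate the composition $f_{0,n}=f_{0,k}\circ f_{k,n}$ (or use Fa\`a di Bruno) and bound the combinatorial sum using assumption $\mathcal{H}(\beta)$, which controls the first $\lceil\beta\rceil-1$ moments of each offspring law uniformly in the environment; the factor $e^{(l-1)(S_n-M_n)}$ arises because intermediate factors $f'_{j,n}(1)=e^{S_n-S_j}\leq e^{S_n-M_n}$, and the combinatorial count over the $n$ generations produces the polynomial prefactor $n^{l-1}$. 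This part is routine but needs the heavy-tail exponent to guarantee finiteness of the relevant moments $\E[L^l/m]$ for $l\le\lceil\beta\rceil-1$.

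Next I would turn to the estimates (\ref{2306}), (\ref{1101}), (\ref{11012}) for $H^{(l)}$. The key is to differentiate the definition $H(s)=\sum_{k=0}^{n-1}U_k\,h_{k+1}(f_{k+1,n}(s))$ term by term: by the chain rule, $H^{(l)}(s)$ is a sum over $k$ of $U_k$ times a Fa\`a di Bruno expansion of $\frac{d^l}{ds^l}h_{k+1}(f_{k+1,n}(s))$, involving $h_{k+1}^{(j)}$ evaluated at $f_{k+1,n}(s)$ and products of derivatives $f^{(i)}_{k+1,n}(s)$. Here Lemma \ref{bound2} from the appendix provides the bounds on $h^{(j)}_{\mu,k}$ analogous to (\ref{boundhtilde})--(\ref{boundhptilde}), and one needs the elementary convexity estimates $f^{(i)}_{k+1,n}(s)\le f^{(i)}_{k+1,n}(1)$ together with (\ref{2406}) applied to the shifted process from generation $k+1$. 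The three cases $l<\lceil\beta\rceil-2$, $l=\lceil\beta\rceil-2$, $l=\lceil\beta\rceil-1$ differ precisely because $h_{k+1}$ is smooth enough to absorb up to $\lceil\beta\rceil-2$ derivatives with purely polynomial (in $1/(1-s)$) blow-up hidden in the slowly varying $\Upsilon$, but the top derivatives force a genuine singular factor $(1-s)^{-(\lceil\beta\rceil-\beta)}$ reflecting the non-integer smoothness $\beta\notin\mathbb{N}$; and the last derivative additionally produces the $(1-s)^{-1-(\lceil\beta\rceil-\beta)}$ term from differentiating $h$ one more time. I would also use, exactly as in the $\beta\in(1,2]$ proof, the bound $1/(1-f_{k+1,n}(s))^{\mu}\leq_c n\,e^{-\mu M_n}(1-s)^{-\mu}$ obtained from (\ref{tele2}) with $0<\mu<\beta-1$, which is where the $n^2 e^{-M_n}(1-s)^{-1}$ argument of $\Upsilon$ and the factors $e^{-S_n}e^{(\beta-1)(S_n-M_n)}$, $e^{\beta(S_n-M_n)}$ come from.

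I expect the main obstacle to be bookkeeping in the Fa\`a di Bruno step: keeping careful track, for each partition of $\{1,\dots,l\}$ into blocks, of how the exponents of $e^{S_n-M_n}$, the powers of $n$, and the powers of $(1-s)^{-1}$ combine, and checking that the worst case over all partitions is exactly the displayed right-hand side. One has to verify that the singular contributions from the top-order $h$-derivatives always dominate the smooth ones in the regime $(1-s)\to 0$ while the smooth term $n^l e^{l(S_n-M_n)}$ dominates near $s=0$, so that the maximum/sum form of the bound is the natural envelope. The induction must be organized so that when $l=\lceil\beta\rceil-1$ one may still invoke (\ref{2406}) only for indices $\le\lceil\beta\rceil-1$ — no higher moments are available — which is why the chain-rule expansion never needs $f^{(i)}$ with $i\ge\lceil\beta\rceil$, a point I would check explicitly. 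Once Lemma \ref{lemdif1} is in place, combining (\ref{23061}) at $l=\lceil\beta\rceil-1$ with a Markov-type inequality on the coefficients of $g_{0,n}$ (as in (\ref{derivativeg}), now using the $\lceil\beta\rceil$-th derivative) will yield Theorem \ref{theobond2}.
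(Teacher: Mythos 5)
Your plan reproduces the paper's architecture for the bulk of the lemma: the bounds on $H^{(l)}$ are obtained exactly as in the paper by differentiating $H(s)=\sum_k U_k h_{k+1}(f_{k+1,n}(s))$ term by term, applying the Fa\`a di Bruno formula of Lemma \ref{lemder}, using the monotonicity/convexity estimates $f^{(i)}_{k+1,n}(s)\le f^{(i)}_{k+1,n}(1)$ and the shifted version of (\ref{2406}), and invoking the bounds on $h^{(j)}_k$ (for which, a small slip, you need Lemma \ref{lem3} and not Lemma \ref{bound2} — the latter only covers $h_{\mu}$ and $h'_{\beta-1}$ in the $\beta\le 2$ regime). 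Where you genuinely diverge is in (\ref{2406}) itself. You propose to differentiate the composition $f_{0,n}=f_{0,k}\circ f_{k,n}$ and control the Fa\`a di Bruno sum via moment bounds on the individual offspring laws. The paper does something sharper: it applies Lemma \ref{lemder} with $h(x)=1/x$ and $f=g_{0,n}$ to expand $\frac{d^l}{ds^l}g_{0,n}(s)^{-1}$, identifies this with $(1-s)H^{(l)}(s)-lH^{(l-1)}(s)$ via the telescoping representation $g_{0,n}(s)^{-1}=U_n+(1-s)H(s)$, sends $s\to1$, solves algebraically for $g_{0,n}^{(l)}(1)$ in terms of $H^{(l-1)}(1)$ and lower-order $g_{0,n}^{(i)}(1)$, and then converts to $f_{0,n}^{(l+1)}(1)$ via (\ref{gf}). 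This makes (\ref{2306}) and (\ref{2406}) alternate in a single induction on $l$, with all constants manifestly uniform in $n$ and the environment. Your composition route is plausible but needs a second induction over the number of generations, and one must check that the implicit constant does not grow with $n$ (na\"ively each generation contributes a multiplicative $O(1)$); this bookkeeping is nontrivial and your sketch does not address it. Relatedly, the moments $f^{(j)}_k(1)$ are \emph{not} controlled "uniformly in the environment" under $\mathcal{H}(\beta)$: they scale like $m_k=f'_k(1)$, which is exactly why the paper routes everything through the normalized quantities $h_k$, $H$ and $g_{0,n}^{-1}$ rather than through raw moments. Finally, the $l=0$ "base case" you state is vacuous/false ((\ref{2406}) with $l=0$ would read $1\le_c n^{-1}e^{M_n}$); the lemma and the paper's induction start at $l=1$, where $f'_{0,n}(1)=e^{S_n}$ matches the bound with equality.
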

\begin{proof}
We prove the Lemma by induction with respect to $l$ and all the following relations hold a.s. for every $s\in[0,1)$. For $l=1$, (\ref{2406})
is trivially fulfilled since $f'_{0,n}(1)=e^{S_n}$. First, we consider $l<\lceil\beta\rceil-2$  and  we assume  that  (\ref{2406}) holds for 
every $i\leq l$. We are first proving that  (\ref{2306}) holds for $l$ and then that (\ref{2406})
holds for $l+1$.

By induction assumptions and  monotonicity of generating functions and its derivatives, for all $i\leq l$ and $s\in[0,1]$, 
\begin{eqnarray}
 f_{k+1,n}^{(i)}(s)  \ \leq \ f_{k+1,n}^{(i)}(1) & \leq_c  & n^{i-1} \ e^{S_n} \ e^{(i-1)(S_n-S_k-\min_{j=k,..,n} \{S_{j}-S_k\})}\nonumber \\
&\leq_c& n^{i-1}\ e^{S_n} \ e^{(i-1)(S_n-M_n)}. \label{2611}
\end{eqnarray}
Lemma \ref{lemder} given in the appendix ensures that (see Lemma \ref{lemder}) for the definition of $u_{j,l}$) 
\begin{eqnarray}
 \Big|\frac{d^l}{ds^l} h_{k+1}(f_{k+1,n}(s))\Big| &=& \Big|\sum_{j=1}^{l} h_{k+1}^{(j)}(f_{k+1,n}(s)) u_{j,l}(s) \Big| \nonumber
\end{eqnarray}
and using (\ref{2611})
\begin{eqnarray}
u_{j,l}(s) &\leq_c& n^{l-j} \ e^{jS_n} \ e^{(l-j)(S_n-M_n)} \ \leq_c \ n^{l-1} \ e^{S_n} \ e^{(l-1)(S_n-M_n)}. \nonumber
\end{eqnarray}
By Lemma \ref{lem3} also given in the appendix, for $j< \lceil\beta\rceil-2$, the derivatives $h^{(j)}_k$ are bounded by a constant that does not depend on $\omega$. 
Thus
\begin{eqnarray}
 \Big|\frac{d^l}{ds^l} h_{k+1}(f_{k+1,n}(s))\Big| & \leq_c & n^{l-1} \ e^{S_n} e^{(l-1)(S_n-M_n)}. \nonumber
\end{eqnarray}
Then recalling (\ref{defH}), we have
\begin{eqnarray}
 |H^{(l)}(s)| &\leq_c& \sum_{k=0}^{n-1} n^{l-1} \ e^{-S_k} e^{(l-1)(S_n-M_n)} e^{S_n}\ \leq_c \ n^{l} e^{l(S_n-M_n)} , \nonumber
\end{eqnarray}
which gives (\ref{2306}) for $l<\lceil\beta\rceil-2$. 

We can now prove that  (\ref{2406}) is fulfilled for $l+1<\lceil\beta\rceil-1$.
Using Lemma \ref{lemder} again (see (\ref{der1})) with $f=g_{0,n}$ and $h(x)=1/x$, we get
\begin{eqnarray}
 \frac{d^l}{ds^l} g_{0,n}(s)^{-1} &=& \sum_{j=1}^{l}(-1)(-2)\cdots (-j) g_{0,n}(s)^{-(j+1)} u_{j,l}(s) \nonumber \\
&=& -g_{0,n}(s)^{-2} g_{0,n}^{(l)}(s) +\sum_{j=2}^{l}(-1)(-2)\cdots (-j) g_{0,n}(s)^{-(j+1)} u_{j,l}(s),  \label{2307}
\end{eqnarray}
where  
\begin{eqnarray}
 u_{j,l} (s) &=& \sum_{i=(i_1,\ldots,i_{2j})\in \mathcal{C}(j,l)} c_i (g_{0,n}^{(i_1)}(s))^{i_2} \cdots (g_{0,n}^{(i_{2j-1})})(s))^{i_{2j}},  \nonumber
\end{eqnarray}
and $\mathcal{C}(j,l) = \big \{ (i_1, \ldots , i_{2j})) \in \mathbb{N}^{2j} \big | i_1 i_2+ i_3 i_4 + \ldots =l \mbox{ and } \ i_2+i_4+\ldots =j\big \}$. \\
Moreover, the induction assumption (\ref{2406}) and (\ref{gf}) give for every $i\leq l-1$,
\begin{eqnarray}
 g_{0,n}^{(i)}(1)\leq_c n^{i} \ e^{S_n} e^{i(S_n-M_n)}. \nonumber
\end{eqnarray}
Thus
\begin{eqnarray}
 u_{j,l}(1) \leq_c n^{l} \ e^{jS_n} e^{l(S_n-M_n)}  .\nonumber  
\end{eqnarray}
By (\ref{defH}), the left hand-side of (\ref{2307}) is equal to $(1-s) H^{(l)}(s)-lH^{(l-1)}(s)$. By (\ref{2306}), for $l<\lceil\beta\rceil-2$, $(1-s)H^{(l)}(s)$ vanishes for $s=1$. Thus letting $s=1$  and noting that $g_{0,n}(1)=e^{S_n}$ yields
\begin{eqnarray}
 g_{0,n}^{(l)}(1) &\leq_c& e^{2S_n} \Big(\sum_{j=2}^{l}(-1)(-2)\cdots (-j) e^{-(j+1)S_n} \ n^{l} \ e^{jS_n} e^{l(S_n-M_n)} \  + l|H^{(l-1)}(1)|\Big) \nonumber \\
&\leq_c&  e^{S_n} \ n^{l} \ e^{l(S_n-M_n)} + e^{2S_n} |H^{(l-1)}(1)|. \nonumber
\end{eqnarray}
As we have already proved (\ref{2306}) for $l<\lceil\beta\rceil-2$, we get
\begin{eqnarray}
g_{0,n}^{(l)}(1) &\leq_c& \ n^{l} \ e^{S_n} e^{l(S_n-M_n)}  +  n^{l-1}\ e^{2S_n} e^{(l-1)(S_n-M_n)} \nonumber \\
&\leq_c & n^{l} e^{S_n} e^{l(S_n-M_n)} . \nonumber 
\end{eqnarray}
Using  (\ref{gf}), we get  (\ref{2406}) for $l+1$, which completes the induction and proves (\ref{2406}) for $l<\lceil\beta\rceil-1$.\\

$\qquad$ Let us  prove the bound on $H^{(l)}(s)$ for $l=\lceil\beta\rceil-2$. Using again Lemmas \ref{lem3} and \ref{lemder} and (\ref{2406}) yields
\begin{eqnarray}
&& \Big|\frac{d^l}{ds^l} h_{k+1}(f_{k+1,n}(s))\Big| \nonumber \\
&=& \Big|\sum_{j=1}^{l-1} h_{k+1}^{(j)}(f_{k+1,n}(s)) u_{j,l}(s) + h_{k+1}^{(l)}(f_{k+1,n}(s)) (f'_{k+1,n}(s))^l\Big|\nonumber \\
&\leq_c& n^{l-1} e^{S_n} e^{(\lceil\beta\rceil-2)(S_n-M_n)} + \Upsilon(1/(1-f_{k+1,n}(s))) (1-f_{k+1,n}(s))^{-(\lceil\beta\rceil-\beta)}(f'_{k+1,n}(s))^l. \label{derih}
\end{eqnarray}
Now by the same arguments as in the proof of Theorem \ref{theobond}, $\Upsilon(1/(1-f_{k+1,n}(s)))\leq \Upsilon(n^2 e^{-M_n} (1-s)^{-1})$ and by convexity,
$$(1-f_{k+1,n}(s))^{-(\lceil\beta\rceil-\beta)}\leq (1-s)^{-(\lceil\beta\rceil-\beta)}(f'_{k+1,n}(s))^{-(\lceil\beta\rceil-\beta)}.$$   
Using also $f'_{k+1,n}(s)\leq e^{S_n-M_n}$, by (\ref{derih}) follows
\begin{eqnarray}
 \Big|\frac{d^l}{ds^l} h_{k+1}(f_{k+1,n}(s))\Big| &\leq_c&  n^{l-1}e^{S_n}e^{(\lceil\beta\rceil-2)(S_n-M_n)}+\Upsilon(n^2 e^{-M_n} (1-s)^{-1})(1-s)^{-(\lceil\beta\rceil-\beta)}e^{(\beta-2)(S_n-M_n)}. \nonumber
\end{eqnarray}
Combining this inequality with (\ref{defH})  proves (\ref{1101}).\\

$\qquad$ This implies that    $(1-s)H^{(l)}(s)\rightarrow 0$ as $s\rightarrow 1$  for $l=\lceil\beta\rceil-2$.
Thus we  can apply the same arguments to get an upper bound for  $g_{0,n}^{(l)}(1)$ and
prove (\ref{2406}) for $l=\lceil\beta\rceil-1$.\\

$\qquad$ Finally, let $l=\lceil\beta\rceil-1$. We apply just the same arguments as before. Then Lemmas \ref{lem3} and \ref{lemder} yield
\begin{eqnarray}
&& \Big|\frac{d^l}{ds^l} h_{k+1}(f_{k+1,n}(s))\Big| \nonumber \\
&=& \Big|\sum_{j=1}^{l-2} h_{k+1}^{(j)}(f_{k+1,n}(s)) u_{j,l}(s) + 
 l h_{k+1}^{(l-1)}(f_{k+1,n}(s))f^{(2)}_{k+1,n}(s) (f'_{k+1,n}(s))^{l-2}  \nonumber \\
&& \qquad \qquad  \qquad \qquad+h_{k+1}^{(l)}(f_{k+1,n}(s)) (f'_{k+1,n}(s))^l\Big|\nonumber \\
&\leq_c & n^{l-1} e^{S_n} e^{(l-1)(S_n-M_n)} 
\nonumber\\ 
&& 
+n\Upsilon(n^2 e^{-M_n}(1-s)^{-1})\big(e^{(\beta-1)(S_n-M_n)} (1-s)^{-(\lceil\beta\rceil-\beta)}+e^{(\beta-2)(S_n-M_n)} (1-s)^{-1-(\lceil\beta\rceil-\beta)}\big).\nonumber
\end{eqnarray}
Using again (\ref{defH}), this proves (\ref{11012}). 
\end{proof}

\begin{proof}[Proof of Theorem \ref{theobond2} for $\beta>2$]
Let $l=\lceil\beta\rceil-1$. Without loss of generality, we assume $\Upsilon\geq 1$. The following relations hold a.s. 
Using (\ref{2307}) and (\ref{23061}),

\begin{eqnarray}
 g^{(l)}_{0,n}(s) &=& g_{0,n}(1)^2 \Big(-(1-s) H^{(l)}(s) +l H^{(l-1)}(s)+\sum_{j=2}^{l}(-1)(-2)\cdots (-j) g_{0,n}(s)^{-(j+1)} u_{j,l}(s)\Big)  \nonumber 
\end{eqnarray}
Now using (\ref{1101}),  (\ref{11012}), (\ref{2307}) as well as $\exp(S_n)\leq \exp(S_n-M_n)$ for the first terms and (\ref{2406}) together with (\ref{gf}) for the last term yields
\begin{eqnarray}
 g^{(l)}_{0,n}(s)&\leq_c& e^{S_n} n^{l} \Upsilon(n^2e^{-M_n} (1-s)^{-1})\Big( (1-s)^{-(\lceil\beta\rceil-\beta)} e^{(\beta-1)(S_n-M_n)} \nonumber \\
&&+ (1-s)^{1-(\lceil\beta\rceil-\beta)} e^{\beta(S_n-M_n)} + (1-s)e^{\lceil\beta\rceil (S_n-M_n)} \nonumber \\
&&+(1-s)^{-(\lceil\beta\rceil-\beta)} e^{(\beta-1)(S_n-M_n)} + e^{(\lceil\beta\rceil-1) (S_n-M_n)}\Big) \nonumber \\
&&+ e^{S_n} n^{l} e^{(\lfloor \beta\rfloor-1)(S_n-M_n)}. \nonumber 
\end{eqnarray}

Analogously to  (\ref{derivativeg}), we get the following estimate for every $1/2\leq s< 1$,
\begin{eqnarray}
g^{(l)}_{0,n}(s) &\geq& s^{k} \frac{k^{l+1}}{2} \P(Z_n>k|\Pi). \nonumber 
\end{eqnarray}
Choosing $s=1-1/k$ yields
\begin{eqnarray}
 \P(Z_n>k|\Pi) &\leq_c& e^{S_n} n^l \Upsilon(n^2e^{-M_n} k)\Big( k^{-\beta} e^{(\beta-1)(S_n-M_n)} 
+ k^{-(\beta+1)} e^{\beta(S_n-M_n)}\nonumber \\
 &&\qquad \qquad \qquad \qquad+ k^{-\lceil\beta\rceil-1} e^{\lceil\beta\rceil(S_n-M_n)}+k^{-\lceil\beta\rceil} e^{(\lceil\beta\rceil -1)(S_n-M_n)}\Big). \nonumber
\end{eqnarray}
Using that for all $a\geq 1$ and $b\geq 0$, the function $x\rightarrow a^{-x} \exp((x-1)b)$ is monotone 
and that $\beta\leq \lceil \beta\rceil<\beta+1\leq \lceil \beta\rceil +1$, we have for all $k\geq 1$,
$$k^{-\lceil\beta\rceil} e^{(\lceil\beta\rceil -1)(S_n-M_n)}\leq \max\big\{k^{-\beta} e^{(\beta-1)(S_n-M_n)};k^{-\lceil\beta\rceil-1} e^{\lceil\beta\rceil(S_n-M_n)}\big\}.$$
Combining the two last inequalities leads to
\begin{eqnarray}
 P(Z_n>k|\Pi) &\leq_c& e^{S_n} n^{l}  \Upsilon(n^2e^{-M_n}k) \max\big\{k^{-\beta} e^{(\beta-1)(S_n-M_n)};k^{-\lceil\beta\rceil-1} e^{\lceil\beta\rceil(S_n-M_n)}\big\}, \nonumber
\end{eqnarray}
which completes the proof.
\end{proof}

\begin{proof}[Proof of the lower bound of Theorem $1$ for $\beta>2$]
The proof now follows the proof for $\beta\in(1,2]$. Theorem 
\ref{theobond2} yields
\begin{eqnarray}
 \liminf_{n\rightarrow\infty} -\frac{1}{n} \mathbb{P}(Z_n>e^{\theta n}) &\geq& \min\big\{\psi_{\gamma, \beta, \Lambda}(\theta), \psi_{\gamma,\lceil \beta\rceil+1, \Lambda}(\theta)\big\},\nonumber
\end{eqnarray}
where $\psi$ is defined in (\ref{psidef}). Now using the characterization of $\psi$ given in forthcoming Lemma \ref{exprPsi}, we deduce that for any $\theta\geq 0$,
$$\psi_{\gamma,\beta,\Lambda}(\theta) \leq \psi_{\gamma,\lceil \beta\rceil+1,\Lambda}(\theta).$$
Thus 
$$\min\big\{\psi_{\gamma,\beta,\Lambda}(\theta), \psi_{\gamma,\lceil \beta\rceil+1,\Lambda}(\theta)\big\}=\psi_{\gamma,\beta,\Lambda}(\theta) =\psi(\theta)$$
and we get the expected lower bound.
\end{proof}

\section{Proof of the corollary}
By assumption,  there exists a constant $d<\infty$ such that for every $\beta>0$,
\begin{eqnarray}
 \mathbb{P}(L>z|f,L>0) &\leq& d \cdot (m\wedge 1)\cdot z^{-\beta} \quad \mbox{a.s.}\nonumber 
\end{eqnarray}
Then we can apply the lower bound in Theorem  \ref{theo1} for every $\beta>0$. This yields for all $\beta>0$ and $\theta\geq 0$,
\begin{eqnarray}
 \liminf_{n\rightarrow\infty} -\frac{1}{n} \log \mathbb{P}(Z_n>e^{\theta n}) &\geq& \psi_{\gamma, \beta,\Lambda}(\theta). \nonumber 
\end{eqnarray}
Now taking the limit $\beta\rightarrow\infty$, the monotone convergence of $\psi_{\gamma,\beta, \Lambda}$ 
 yields
\begin{eqnarray}
 \liminf_{n\rightarrow\infty} -\frac{1}{n} \log \mathbb{P}(Z_n>e^{\theta n}) &\geq& \psi_{\gamma, \infty,\Lambda}(\theta), \nonumber 
\end{eqnarray}
where 
\Bea
\psi_{\gamma,\infty,\Lambda}(\theta)&:=& \lim_{\beta\rightarrow\infty} \inf_{t\in[0,1],  s\in [0,\theta]} \Big\{t\gamma+\beta s+(1-t)\Lambda((\theta-s)/(1-t))\Big\}\\
&=&\inf_{t\in[0,1]} \Big\{t\gamma+(1-t)\Lambda(\theta/(1-t))\Big\}. \Eea
This gives the upper bound and the lower bound follows readily the proof given in Section $3$ where we consider the natural associated path (or see \cite{BK09}). \qed

\section{Appendix}
We give in this section several technical results useful for the proofs.
\subsection{Characterization of the rate function $\psi$}
\begin{lemma} \label{exprPsi}
Let $0\leq \gamma\leq \Lambda(0)$ and $\beta> 0$. The  function $\psi$ defined
for $\theta\geq 0$ by 
$$\psi(\theta)=\inf_{t\in[0,1], \\  s\in [0,\theta]}\big \{t\gamma+\beta s+(1-t)\Lambda((\theta-s)/(1-t))\big\}$$
  is the largest convex function such that for all $x,\theta \geq 0$
\begin{eqnarray}
 \psi(0)=\gamma,  \quad \psi(\theta)\leq \Lambda(\theta),  \quad \psi(\theta+x)\leq \psi(\theta)+\beta x. \label{propPsi}
\end{eqnarray}
\label{lempsi}
\end{lemma}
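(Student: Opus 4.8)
Here is my plan for proving Lemma~\ref{exprPsi} (equivalently Lemma~\ref{lempsi}).

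\textbf{Strategy.} I would prove the statement in two directions. Call $\mathcal{C}$ the set of convex functions $\varphi:[0,\infty)\to[0,\infty]$ satisfying the three constraints in (\ref{propPsi}), i.e.\ $\varphi(0)=\gamma$ (or $\leq\gamma$, which together with convexity and nonnegativity and the later constraints forces $=\gamma$ in the relevant regime), $\varphi(\theta)\leq\Lambda(\theta)$ for all $\theta$, and the $\beta$-Lipschitz-type bound $\varphi(\theta+x)\leq\varphi(\theta)+\beta x$. First I would check that $\psi\in\mathcal{C}$: convexity of $\psi$, then verification of each of the three constraints. Second I would show $\psi$ dominates every $\varphi\in\mathcal{C}$, i.e.\ $\varphi\leq\psi$ pointwise, which identifies $\psi$ as the largest such function.

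\textbf{Step 1: $\psi\in\mathcal{C}$.} Convexity: $\psi$ is an infimal-type construction; the cleanest way is to observe that the function $(t,s,\theta)\mapsto t\gamma+\beta s+(1-t)\Lambda((\theta-s)/(1-t))$ is jointly convex on $\{t\in[0,1),\ s\in[0,\theta]\}$ (the last term is the perspective of the convex function $\Lambda$, hence jointly convex in $(1-t,\theta-s)$, and the rest is linear), and that the partial infimum of a jointly convex function over a convex set of the auxiliary variables $(t,s)$ is convex in $\theta$; the boundary value $t=1$ contributes the constant $\gamma$ (with $s\le\theta$) and is handled as a limit, so it does not break convexity. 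For the constraints: taking $t=1,s=\theta$... more carefully $t=1$ forces the middle term interpretation, so instead take $t\to 1$, $s=0$: the term $(1-t)\Lambda(\theta/(1-t))$ is nonincreasing as... actually one uses that $(1-t)\Lambda(\theta/(1-t))\to 0$ as $t\to1$ when $\theta=0$, giving $\psi(0)\leq\gamma$; the reverse $\psi(0)\geq\gamma$ follows since each of the three costs is nonnegative and $\gamma t\to\gamma$ forces... here I would argue $\psi(0)=\gamma$ directly by noting $\beta s+( 1-t)\Lambda(-s/(1-t))$ with $\theta=0$ forces $s=0$ hence the bracket is $t\gamma$, minimized at $t=1$... but $\Lambda$ of a negative argument is $0$ by the convention stated after (\ref{rate}), so actually $\psi(0)=\inf_t t\gamma=0$ unless $\gamma=\Lambda(0)$; the hypothesis $\gamma\leq\Lambda(0)$ is exactly what reconciles this — one uses $\psi(\theta)\le\Lambda(\theta)$ together with $\Lambda(0)\ge\gamma$. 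I expect this boundary bookkeeping around $\theta=0$, the $t=1$ edge, and the convention $\Lambda\equiv 0$ below $\E[X]$ to be the fiddly part. The bound $\psi(\theta)\leq\Lambda(\theta)$ comes from $t=0,s=0$. The bound $\psi(\theta+x)\leq\psi(\theta)+\beta x$ comes from: given a near-optimal $(t,s)$ for $\psi(\theta)$, use $(t,s+x)$ as a candidate for $\psi(\theta+x)$, since $(\theta+x)-(s+x)=\theta-s$, adding exactly $\beta x$ to the cost.

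\textbf{Step 2: maximality.} Let $\varphi\in\mathcal{C}$. Fix $\theta\geq0$ and any admissible $(t,s)$ with $t\in[0,1)$, $s\in[0,\theta]$. Set $u=(\theta-s)/(1-t)\geq 0$. Then by the three properties: $\varphi(u)\leq\Lambda(u)$; applying the $\beta$-slope bound, $\varphi(u+ s/(1-t)\cdot\text{(something)})$... the cleanest route: by convexity, $\varphi(\theta-s)=\varphi\big((1-t)u+t\cdot 0\big)\leq (1-t)\varphi(u)+t\varphi(0)=(1-t)\varphi(u)+t\gamma\leq (1-t)\Lambda(u)+t\gamma$. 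Then by the $\beta$-increment bound applied with increment $s$, $\varphi(\theta)=\varphi((\theta-s)+s)\leq\varphi(\theta-s)+\beta s\leq t\gamma+\beta s+(1-t)\Lambda(u)$. Taking the infimum over all admissible $(t,s)$ (and the limit $t\to1$ for the boundary term) gives $\varphi(\theta)\leq\psi(\theta)$. Combined with Step~1 this proves $\psi$ is the largest element of $\mathcal{C}$.

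\textbf{Main obstacle.} The only real subtlety is the treatment of the degenerate/boundary cases: the endpoint $t=1$ in the infimum (which must be read as a limit), the convention $\Lambda(\theta)=0$ for $\theta\le\E[X]$ interacting with the constraint $\psi(0)=\gamma$, and the possibility $\Lambda(\theta)=\infty$ (the single jump point, as recalled after (\ref{rate})) so that some candidate values are $+\infty$ — one must make sure infima and the convexity manipulations in Step~2 still go through, e.g.\ by first restricting to $\theta$ with $\psi(\theta)<\infty$ and using lower semicontinuity. Everything else is a routine perspective-function convexity argument plus the two-line candidate-substitution estimates above.
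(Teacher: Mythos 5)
Your plan follows essentially the same structure as the paper's proof: (i) convexity of $\psi$, (ii) verification of the three constraints, (iii) maximality via the chain convexity $\Rightarrow$ domination, and the key inequalities in your Step 2 (bound $\gamma$ by $\varphi(0)$, bound $\Lambda$ below by $\varphi$, apply Jensen with weights $t,1{-}t$, then the $\beta$-increment bound) are exactly the paper's chain, just read in the opposite direction. Framing convexity via the perspective-of-$\Lambda$ viewpoint is a cleaner articulation of what the paper carries out by hand with near-optimizers $(t',s'),(t'',s'')$; it is the same idea. One wobble in your Step 1: when $\theta=0$ the constraint $s\in[0,0]$ forces $s=0$, so the bracket is $t\gamma+(1-t)\Lambda(0)$, not $t\gamma$ — your intermediate line ``$\psi(0)=\inf_t t\gamma=0$'' drops the $(1-t)\Lambda(0)$ term and would be wrong in the subcritical case where $\Lambda(0)>0$; the correct argument, which you do eventually land on, is that $\Lambda(0)\ge\gamma$ gives $t\gamma+(1-t)\Lambda(0)\ge\gamma$ with equality in the limit $t\to1$, hence $\psi(0)=\gamma$.
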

\begin{proof} First, we  prove that $\psi$ is convex.
 Using the definition of $\psi$ and the convexity of $\Lambda$,  for any $\theta', \theta'' \ge 0$ and $\epsilon > 0$ there exist $t', t'' \in [0,1)$ and $s'\in [0,\theta']$, $s''\in [0,\theta'']$,  such that 
for every $\lambda\in [0,1]$,
\Bea
&&\lambda  \psi(\theta') +  (1-\lambda ) \psi(\theta'') \\
&& \quad \ge  \lambda  [t'\gamma +  \beta s'+ (1-t') \Lambda ((\theta'-s')/(1-t'))] \\
&& \qquad \qquad \qquad + (1-\lambda )[
 t''\gamma +  \beta s''+ (1-t'') \Lambda ((\theta''-s'')/(1-t''))] - \epsilon  \\
&&\quad \ge [\lambda t'+(1-\lambda )t''] \gamma + [\lambda s'+(1-\lambda )s''] \beta \\
&&\qquad \qquad \qquad  +\Big(1-[\lambda t'+(1-\lambda )t'']\Big)  \Lambda \Big(\frac{\lambda\theta'+(1-\lambda )\theta''-(\lambda s'+(1-\lambda )s'')}{1-[\lambda t'+(1-\lambda )t'']}\Big) - \epsilon\\
&& \quad \ge \psi\Big( \lambda \theta'+ (1-\lambda )\theta''\Big) - \epsilon.
\Eea
Letting $\epsilon\rightarrow 0$ entails that $\psi$ is convex. \\

Second, following the previous computation, we verify that $\psi$ fulfills (\ref{propPsi}). For any $\theta\geq 0$ and $\epsilon>0$, there exist $t'\in[0,1)$ and $s'\in[0,\theta]$ such that
\begin{eqnarray}
 \psi(\theta) & \geq & t'\gamma + \beta s'+(1-t')\Lambda\big((\theta-s')/(1-t')\big)-\epsilon \nonumber \\
&=&  t'\gamma + \beta  (s'+x)+(1-t')\Lambda\big((\theta+x- (s'+x))/(1-t')\big)-\beta x-\epsilon \nonumber \\
&\geq& \inf_{t\in[0,1],\tilde s\in [0,\theta+x]} \big\{t\gamma+\beta \tilde{s}+(1-t)\Lambda((\theta+x-\tilde s)/(1-t))\big\}-\beta x-\epsilon.\nonumber
\end{eqnarray}
Taking the limit $\epsilon\rightarrow 0$ yields the second property in (\ref{propPsi}). Furthermore,
letting $t=0$ and $s=0$ implies $\psi(\theta)\leq \Lambda(\theta)$ and  $t\rightarrow 1$ entails that $\psi(0)\leq \gamma$. This completes the proof of  (\ref{propPsi}).\\

$\qquad$ Finally, let $\kappa$ be any convex function which satisfies (\ref{propPsi}). Using these assumptions ensures that for all  $t\in[0,1)$ and $0\leq s\leq \theta$, 
\begin{eqnarray*}
 t\gamma + \beta s + (1-t) \Lambda((\theta-s)/(1-t)) & \ge &t\kappa(0) + \beta s+(1-t) \kappa((\theta-s)/(1-t)) \\
& \ge & \beta s+ \kappa\big( t0 + (1-t) (\theta-s)/(1-t)) \big) \\
&=& \beta s + \kappa(\theta-s) \ \nonumber \\
&\geq& \kappa(\theta). \nonumber
\end{eqnarray*}
Taking the infimum over $s$ and $t$, we get $\psi(\theta)\geq \kappa(\theta)$ and the proof is complete. 
\end{proof}
We give now describe a last characterization of $\psi$ that results from Lemma \ref{lempsi} (see Figure \ref{ratefig}). Let $\theta^*$ and $\theta^\dagger$ be defined as in (\ref{thetastar}) and (\ref{thetadagger}) and assume $0<\theta^*<\theta^{\dagger}<\infty$. As convex and monotone function, $\Lambda$ has at most one jump (to infinity). Let this jump be in $0<\theta_j\leq \infty$ and $\Lambda(\theta)$ is differentiable for $\theta<\theta_j$. As $\Lambda$ is also continuous from below, $\Lambda(\theta_j)<\infty$. Now, by the preceding characterization, $\psi$ is the largest convex function, starting in $\psi(0)=\gamma$, being at most as large as $\Lambda$ and having at most slope $\beta$.\\
The largest convex function through the point $(0,\gamma)$ being smaller/equal than $\Lambda$ has to be linear and has to be a tangent of $\Lambda$. By definition of $\theta^*$, the tangent at $\Lambda$ in $\theta^*$ goes through the point $(0,\gamma)$. Thus $\psi$ is linear for $\theta<\theta^*$ and follows this tangent. For $\theta>\theta^*$, $\psi$ is identical with $\Lambda$ until the slope of $\Lambda$ is exactly $\beta$ (or until $\Lambda$ jumps to infinity). At this point $\theta^\dagger$, the last condition becomes important and $\psi$ is linear with slope $\beta$ for $\theta>\theta^\dagger$. Summing up,
\begin{eqnarray}
 \psi(\theta) &=& \left\{ \begin{array}{l@{\quad,\quad}l}
                             \gamma(1-\frac{\theta}{\theta^*}) + \frac{\theta}{\theta^*} \Lambda(\theta^*) & \mbox{if} \ \theta\leq \theta^{*} \\
\Lambda(\theta) & \mbox{if} \ \theta^*<\theta< \theta^{\dagger}\\
 \beta(\theta-\theta^\dagger) + \Lambda(\theta^\dagger) & \mbox{if} \ \theta\geq \theta^{\dagger} 
                             \end{array} \right. \ . \nonumber
\end{eqnarray}
If $\Lambda'(0)>\beta$, then $\theta^\dagger=0$ and $\psi(\theta)=\gamma + \beta \theta$. If $\gamma=\Lambda(0)$, then $\theta^*=0$. We refrain from describing other degenerated cases. 

\subsection{Slowly varying functions}
In this section, we recall some properties of regularly varying functions
and we refer to \cite{BGT} for details. The function
$\Upsilon: (0,\infty)\rightarrow (0,\infty)$ is a  slowly varying function if for every $a>0$, $$\lim_{x\rightarrow\infty} \frac{\Upsilon(a  x)}{\Upsilon(x)}=1.$$ 

We need a Tauberian result   from \cite{feller2}, p. 423. 
See also \cite{BGT}, Theorem 1.5.11, page 28.
 For any $\alpha>-1$, the function 
 $g(s):=\sum_{k=0}^{\infty} s^k k^{\alpha}$ 
satisfies
$$
 g(s)\sim \Gamma (\alpha+1) (1-s)^{-1-\alpha}  \quad (s\rightarrow 1-).
$$
Then the function 
  $\xi = s\rightarrow (1-s)^{1+\alpha}g(s)$ is continuous on $[0,1)$ and has a finite limit in $1-$. Denoting by $M$ the  supremum of this function extended to $[0,1]$, we get
$$  \sum_{k=1}^{\infty} s^k k^{\alpha}\leq M  (1-s)^{-1-\alpha} \qquad (0\leq s<1). $$
For $\alpha=-1$,  $\sum_{k=1}^{\infty} s^k/k =-\log (1-s)$. As  the logarithm is a slowly varying function, we rewrite the previous results in the following way, which will be convenient in the proofs. \\
There exists a nondecreasing positive slowing varying function $\Upsilon$  such that for all
$\alpha\geq -1$ and $s\in [0,1)$

\begin{eqnarray}\label{bound1}
  \sum_{k=1}^{\infty} s^k k^{\alpha}&\leq&  \Upsilon(1/(1-s)) (1-s)^{-1-\alpha}. 
\end{eqnarray}

\subsection{Bounds for generating functions}
Let $L$ be a random variable with values in $\{0,1,2,...\}$ with expectation $m$, distribution $(p_k)_{k\in\mathbb{N}}$ and generating function $f$. Let us define 
$$q_k:=\mathbb{P}(L>k|f)$$
and the  following function associated to $f$,
\be
\label{defg}
g(s):=\sum_{k=0}^{\infty} s^k q_k =\frac{1-f(s)}{1-s},
\ee
where   the last identity comes from Cauchy product of power series (see also \cite{BK09}).
We recall that  the $l$-the derivative of a function $f$ is denoted by $f^{(l)}$ and that
$f^{(l)}(s)$ and $g^{(l)}(s)$ exist  for every $s\in[0,1)$.  As
$$
 f^{(l)}(s) = \sum_{k=0}^{\infty} k(k-1) \cdots (k-l+1) s^{k-l} p_k, \qquad g^{(l)}(s) = \sum_{k=0}^{\infty} k(k-1) \cdots (k-l+1) s^{k-l} q_k, 
 $$
all derivatives of $f$ and $g$ are nonnegative, nondecreasing functions. We are using $g$ instead of $f$ in the proofs since 
 the associated sequence $(q_k)_{k\in\mathbb{N}}$ is monotone, which is more convenient. Calculating the $l$-th derivative of
$f(s) = 1-(1-s)g(s)$
gives
\begin{eqnarray}
  f^{(l)}(s) &=& l g^{(l-1)}(s)-(1-s) g^{(l)}(s) . \label{deg}
\end{eqnarray}
Thus $g^{(l-1)}(1)$ and $f^{(l)}(1)$ both essentially describe the $l$-th moment of the corresponding probability distribution. More precisely, if $g^{(l-1)}(1)$ is finite, then $f^{(l)}(1)$ is finite and
\begin{eqnarray}
 f^{(l)}(1)  &= & l g^{(l-1)}(1) . \label{gf} 
\end{eqnarray}
Conversely if  $f^{(l)}(1)<\infty$, then 
$$g^{(l-1)}(1)= \sum_{k=1}^{\infty} k(k-1)\cdots(k-l+1) q_k \leq \sum_{k=1}^{\infty} k^l p_k<\infty$$ and $g^{(l-1)}(1)<\infty$. \\

 For $\mu\in(0,1]$, we also define the function 
\be 
h_{\mu}(s):= \frac{g(1)^{\mu}-g(s)^{\mu}}{(g(1)g(s)(1-s))^{\mu}}. \label{1301}\\
\ee
The following useful lemmas   give  versions of assumption $\mathcal{H}(\beta)$ in terms of the function $h_{\mu}$.
  Noting that $g(0)=q_0=\mathbb{P}(L>0|f)$ and $g(1)=m$, we can rewrite
assumption $\mathcal{H}(\beta)$ in the following way
\be
\label{versionH}
q_k \leq d \ g(0) \ (g(1)\wedge1) \ k^{-\beta} \qquad (k\geq 1).  	 
\ee
\begin{lemma}\label{bound2} Let  $\beta>1$ and assume that (\ref{versionH}) holds for some  constant  $0<d<\infty$.
Then  for every  $0<\mu<(\beta-1)\wedge 1$, there exists a constant $c=c(\beta,d,\mu)$ such that for every $s\in[0,1]$,
\begin{eqnarray}
 h_{\mu} (s) &\leq& c. \label{eqth}
\end{eqnarray}
The above bound also holds for $\mu=1$ if $\beta>2$. 
Moreover, if $\beta\in(1,2]$, there exists a nondecreasing positive slowly varying function $\Upsilon=\Upsilon(\beta,d)$  such that, for every $s\in[0,1)$, 
\begin{eqnarray}
 h_{\beta-1}(s) &\leq&  \Upsilon(1/(1-s)) \label{eqhmu} \\
 -h_{\beta-1}'(s) &\leq&  \Upsilon(1/(1-s)) /(1-s). \label{eqhmup} 
\end{eqnarray}
\end{lemma}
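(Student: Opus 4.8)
\textbf{Proof plan for Lemma \ref{bound2}.}

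The plan is to work directly with the representation
$$
h_{\mu}(s)=\frac{g(1)^{\mu}-g(s)^{\mu}}{(g(1)g(s)(1-s))^{\mu}},
$$
and to exploit that $g$ is nondecreasing with $g(0)=q_0=\P(L>0|f)>0$ and $g(1)=m$. First I would dispose of the denominator: since $g(s)\geq g(0)=q_0$ for all $s\in[0,1]$ and $g(1)=m$, we have $(g(1)g(s))^{\mu}\geq (q_0 m)^{\mu}$, so it suffices to bound $\big(g(1)^{\mu}-g(s)^{\mu}\big)/(1-s)^{\mu}$ by $c\,(q_0 m)^{\mu}$, up to the slowly varying correction in the heavy part of the statement. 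Using the elementary inequality $a^{\mu}-b^{\mu}\leq \mu\, b^{\mu-1}(a-b)$ is not quite right here because $\mu\le 1$ makes $x\mapsto x^\mu$ concave; instead I would use $a^{\mu}-b^{\mu}\leq (a-b)^{\mu}$ for $0<\mu\le 1$ and $0\le b\le a$, which reduces everything to controlling
$$
\frac{g(1)-g(s)}{1-s}=\frac{\sum_{k\geq 1}(1-s^{k})q_k}{1-s}=\sum_{k\geq 1}\Big(\sum_{j=0}^{k-1}s^{j}\Big)q_k=\sum_{j\ge 0}s^j\sum_{k>j}q_k.
$$
So the whole problem is reduced to estimating the power series $\sum_{j\ge0}s^j\, r_j$ with $r_j:=\sum_{k>j}q_k$, and by \eqref{versionH} we have $r_j\leq d\,q_0(m\wedge1)\sum_{k>j}k^{-\beta}\leq_c q_0(m\wedge1)\, j^{-(\beta-1)}$ for $j\ge1$ (and $r_0\le_c q_0 m$, say).

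Next, I would feed this into the Tauberian bound \eqref{bound1}: with $\alpha=-(\beta-1)$, which satisfies $\alpha\ge -1$ exactly when $\beta\le 2$, we get $\sum_{j\ge1}s^j j^{-(\beta-1)}\leq \Upsilon(1/(1-s))(1-s)^{-1+(\beta-1)}=\Upsilon(1/(1-s))(1-s)^{\beta-2}$. Hence $g(1)-g(s)\leq_c q_0(m\wedge1)\,\Upsilon(1/(1-s))(1-s)^{\beta-1}$ (the $r_0$ term is lower order), and therefore $\big(g(1)-g(s)\big)^{\mu}/(1-s)^{\mu}\leq_c (q_0(m\wedge1))^{\mu}\Upsilon(1/(1-s))^{\mu}(1-s)^{\mu(\beta-1)-\mu}$. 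For $\mu<(\beta-1)\wedge1$ the exponent $\mu(\beta-1)-\mu=\mu(\beta-2)$ is $\ge -\mu >$ something bounded, but more importantly when $\beta>2$ it is nonnegative so $(1-s)$ to that power is $\le 1$, and when $1<\beta\le2$ we need $\mu(\beta-1)-\mu>0$, i.e. never — so here the right choice is rather $\mu<\beta-1$ with the extra slowly varying factor absorbing the logarithmic case. Dividing by $(g(1)g(s))^{\mu}\geq (q_0 m)^{\mu}\geq_c (q_0(m\wedge 1))^\mu$ makes the $q_0$ and $m$ dependence cancel, leaving a bound depending only on $\beta,d,\mu$, which is \eqref{eqth}; the case $\mu=1,\ \beta>2$ is the same computation since then $\mu<\beta-1$. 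For \eqref{eqhmu} one takes $\mu=\beta-1$ directly: the exponent of $(1-s)$ becomes $(\beta-1)^2-(\beta-1)=(\beta-1)(\beta-2)\le 0$ when $\beta\le 2$, and this negative power of $(1-s)$, times $\Upsilon(1/(1-s))^{\beta-1}$, is itself of the form $\widetilde\Upsilon(1/(1-s))$ after possibly enlarging $\Upsilon$ — one has to check that $x\mapsto x^{-(\beta-1)(\beta-2)}\Upsilon(x)^{\beta-1}$ is again nondecreasing and slowly varying, which follows from Potter-type bounds; but wait, $-(\beta-1)(\beta-2)\ge0$, so this is a genuine power growth, not slowly varying. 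Here I would instead use the sharper estimate $r_j\le_c q_0(m\wedge1) j^{-(\beta-1)}$ more carefully together with $a^{\mu}-b^{\mu}\le (a-b)^\mu$ replaced by interpolating at the point $s$ where $g(s)$ is already comparable to $g(1)$, so that only the tail $\sum_{k>1/(1-s)}q_k\le_c q_0(m\wedge1)(1-s)^{\beta-1}$ and the ``bulk'' $\sum_{1\le k\le 1/(1-s)}(1-s^k)q_k$ contribute, the latter bounded by $(1-s)\sum_{k\le1/(1-s)}k q_k\le_c (1-s)\,q_0(m\wedge1)\sum_{k\le1/(1-s)}k^{1-\beta}$; for $\beta<2$ this sum is $\le_c (1-s)^{\beta-2}$ and for $\beta=2$ it is $\le_c\log(1/(1-s))$, i.e. exactly a slowly varying correction to $(1-s)^{\beta-2}$, reproducing $g(1)-g(s)\le_c q_0(m\wedge1)\Upsilon(1/(1-s))(1-s)^{\beta-1}$ with the desired homogeneity and giving \eqref{eqhmu} after dividing by $(1-s)^{\beta-1}$ and the bounded denominator.

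Finally, for the derivative bound \eqref{eqhmup}, I would differentiate $h_{\beta-1}$ directly. Writing $h_{\mu}(s)=g(1)^{-\mu}\big(g(s)^{-\mu}(1-s)^{-\mu}-g(1)^{-\mu}(1-s)^{-\mu}\big)\cdot$ — more cleanly, $h_\mu(s)=(1-s)^{-\mu}\big((g(1)g(s))^{-\mu}g(1)^\mu-(g(1)g(s))^{-\mu}g(s)^\mu\big)=(1-s)^{-\mu}\big(g(s)^{-\mu}-g(1)^{-\mu}\big)$. So $h_{\mu}(s)=\big(g(s)^{-\mu}-g(1)^{-\mu}\big)(1-s)^{-\mu}$, and
$$
-h_{\mu}'(s)=-\mu(1-s)^{-\mu-1}\big(g(s)^{-\mu}-g(1)^{-\mu}\big)+\mu\,g(s)^{-\mu-1}g'(s)(1-s)^{-\mu}.
$$
Both terms are nonnegative when $g'(s)\ge0$ and $g(s)\le g(1)$, so $-h_\mu'(s)\ge0$ as claimed, and the first term is $(1-s)^{-1}h_\mu(s)\le_c\Upsilon(1/(1-s))/(1-s)$ by \eqref{eqhmu}. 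The second term requires bounding $g'(s)=\sum_{k\ge1}k s^{k-1}q_k\le_c q_0(m\wedge1)\sum_{k\ge1}k^{1-\beta}s^{k-1}$, which by \eqref{bound1} with $\alpha=1-\beta\ (\ge-1$ since $\beta\le2)$ is $\le_c q_0(m\wedge1)\Upsilon(1/(1-s))(1-s)^{\beta-2}$; combined with $g(s)^{-\mu-1}\le q_0^{-\mu-1}$ and $\mu=\beta-1$ this second term is $\le_c (q_0(m\wedge1))^{-(\beta-1)}q_0(m\wedge1)\Upsilon(1/(1-s))(1-s)^{\beta-2}(1-s)^{-(\beta-1)}$, and one checks the homogeneity in $q_0$ and $m$ works out (using $g(s)\ge q_0$ and $g(s)\ge$ something comparable to $q_0 m$ near $s=1$), leaving $\le_c\Upsilon(1/(1-s))(1-s)^{\beta-2-(\beta-1)}=\Upsilon(1/(1-s))(1-s)^{-1}$, as required. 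The main obstacle is keeping the dependence on the environment (i.e. on $q_0=\P(L>0|f)$ and $m$) under control so that the final constant $c$ is deterministic: this is where the factor $g(0)(g(1)\wedge1)$ in \eqref{versionH} is essential, and one must be careful that every division by $g(s)$ is compensated by a matching factor of $q_0$ or $m$ coming from the numerator estimate; the secondary subtlety is verifying that the product of a negative power of $(1-s)$ with a power of the slowly varying $\Upsilon$ from \eqref{bound1} can be re-absorbed into a single nondecreasing slowly varying function, for which I would invoke the representation theorem for slowly varying functions from \cite{BGT}.
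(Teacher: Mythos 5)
Your approach diverges from the paper's at the very first step, and this divergence is fatal for the case $\beta\in(1,2)$.

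You reject the inequality $a^{\mu}-b^{\mu}\leq \mu\,b^{\mu-1}(a-b)$ on the grounds that $x\mapsto x^{\mu}$ is concave, but concavity is exactly what makes that inequality true (it is the tangent-line inequality at $b$, and a concave graph lies below its tangent). The paper does use this bound; the point you seem to have sensed but misdiagnosed is that $b^{\mu-1}$ can be large when $b$ is small, and the paper handles this by a normalization: it writes the numerator as $(\sum_k q_k/g(0))^{\mu}-(\sum_k s^k q_k/g(0))^{\mu}$ so that $b=\sum_k s^k q_k/g(0)\geq q_0/g(0)=1$, whence $b^{\mu-1}\leq1$ and the bound becomes linear in $a-b$. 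Your substitute $a^{\mu}-b^{\mu}\leq(a-b)^{\mu}$ (subadditivity) is weaker precisely where it matters: from $g(1)-g(s)\leq_c q_0(m\wedge1)\Upsilon(1/(1-s))(1-s)^{\beta-1}$ you land on $(1-s)^{\mu(\beta-2)}\Upsilon^{\mu}$, whose exponent $\mu(\beta-2)$ is strictly negative when $\beta<2$, so the quantity is unbounded near $s=1$. The paper's linear bound gives instead $(1-s)^{1-\mu}$ times the Tauberian estimate, i.e.\ $(1-s)^{\beta-1-\mu}\Upsilon$, with a positive exponent whenever $\mu<\beta-1$, hence bounded. You notice that your exponent has the wrong sign and propose to fix it by ``interpolating at the point where $g(s)$ is already comparable to $g(1)$,'' but what you then write only re-derives the same upper bound on $g(1)-g(s)$ and does nothing to change the $\mu$-th power of $(a-b)$ into a first power; the gap is not closed.

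For the derivative estimate your route is genuinely different: instead of differentiating $h_{\beta-1}(s)g(s)^{\beta-1}$ as the paper does, you differentiate the cleaner identity $h_{\mu}(s)=(g(s)^{-\mu}-g(1)^{-\mu})(1-s)^{-\mu}$, which is a nice simplification. Two issues remain. First, the first term $-\mu(1-s)^{-\mu-1}(g(s)^{-\mu}-g(1)^{-\mu})$ is nonpositive, not nonnegative as you claim; this is harmless (you can drop it), but the sign claim is wrong. Second, and more seriously, the environment-dependence in the second term $\mu\,g(s)^{-\beta}g'(s)(1-s)^{-(\beta-1)}$ is not controlled by the steps you give. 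Using $g(s)\geq q_0$ and $g'(s)\leq_c q_0(m\wedge1)\Upsilon(1/(1-s))(1-s)^{\beta-2}$ yields the prefactor $q_0^{1-\beta}(m\wedge1)$, which blows up as $q_0\to0$; the algebra you write, $(q_0(m\wedge1))^{-(\beta-1)}q_0(m\wedge1)=(q_0(m\wedge1))^{2-\beta}\leq1$, does not follow from $g(s)\geq q_0$, since $g(s)\geq q_0(m\wedge1)$ raised to the power $-\beta$ gives $(q_0(m\wedge1))^{1-\beta}$, not $(q_0(m\wedge1))^{-(\beta-1)}$ paired with a surviving linear factor. To salvage your route one would need the auxiliary fact that assumption (\ref{versionH}) forces $q_0\geq c(\beta,d)\,(m\wedge1)$ (by summing $q_k\leq d q_0(m\wedge1)k^{-\beta}$ over $k\geq1$ and using $\sum_{k\geq1}q_k=m-q_0$), which does rescue the bound; but you do not state or prove this, only assert that ``one checks the homogeneity works out.'' The paper sidesteps this entirely by differentiating $h_{\beta-1}g^{\beta-1}$, which produces the factor $g(0)(g(1)\wedge1)/g(s)\leq g(1)\wedge1$ and $(g(1)\wedge1)g(1)^{-(\beta-1)}\leq1$ directly, with no need for the extra lemma.

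Summing up: the proof of (\ref{eqth}) and (\ref{eqhmu}) as written fails for $\beta\in(1,2)$ because the subadditivity inequality yields the wrong power of $(1-s)$, and the fix you sketch does not address this; the proof of (\ref{eqhmup}) uses a different and attractive decomposition but leaves a genuine uniformity-in-the-environment gap that requires an additional (unstated) consequence of (\ref{versionH}).
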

Note that $\Upsilon$ depends on $L$ (or $g$) only through the values of $d$ and  $\beta$. Then under assumption  $\mathcal{H}(\beta)$, we derive from this lemma  a nonrandom  constant bound. \\

$\quad$ In the proofs, we use again the notation $\leq_c$ which means that the inequality is fulfilled up to a multiplicative constant which depends on $\beta$ and $\mu$ but is independent of $s$ and the order of the differentiation.
\begin{proof}
 Using $g(s)\geq g(0)$, we have
\begin{eqnarray}
h_{\mu}(s) & = & \frac{g(1)^{\mu}-g(s)^{\mu}}{(g(1) g(s) (1-s))^{\mu}} \nonumber \\
& \leq & \frac{g(1)^{\mu}-g(s)^{\mu}}{(g(1) g(0) (1-s))^{\mu}}  \nonumber \\
  &\leq & (g(1)\wedge  1)^{-1}\frac{(\sum_{k=0}^{\infty} g(0)^{-1}  q_k)^{\mu}-(\sum_{k=0}^{\infty}s^k q_k g(0)^{-1})^{\mu}}{(1-s)^{\mu}}.\label{new1}
\end{eqnarray}

  Since $\mu\in (0,1]$, the function $x\rightarrow x^{\mu}$ is concave, so that
$ a^\mu-x^\mu \leq \mu x^{\mu-1} (a-x)$
for all $0\leq x\leq a$.  Moreover  
\begin{eqnarray}
1=q_0/g(0)\leq  x:= \sum_{k=0}^{\infty}s^k q_k g(0)^{-1} \leq a:=\sum_{k=0}^{\infty}  q_k g(0)^{-1}. \label{est1}
\end{eqnarray}
Then $x^{\mu-1}\leq 1$ and using  the inequality of concavity  in $(\ref{new1})$ with  $q_k \leq d g(0) \cdot (g(1)\wedge1)\cdot k^{-\beta}$ leads to
\Bea
h_{\mu}(s)&\leq &\mu (g(1)\wedge  1)^{-1}x^{\mu-1}\frac{\sum_{k=0}^{\infty} g(0)^{-1}q_k [1-s^k]}{(1-s)^{\mu}} \\
&\leq_c& \frac{\sum_{k=1}^{\infty} (1-s^k) k^{-\beta}}{(1-s)^{\mu}}\nonumber \\
&=& (1-s)^{1-\mu} \sum_{k=1}^{\infty} \frac{1-s^k}{1-s} k^{-\beta} \nonumber \\
&=& (1-s)^{1-\mu} \sum_{k=1}^{\infty}  k^{-\beta} \sum_{j=0}^{k-1} s^j \nonumber \\
&=& (1-s)^{1-\mu} \sum_{j=0}^{\infty}  s^j \sum_{k=j+1}^{\infty} k^{-\beta} \nonumber \\
&\leq_c& (1-s)^{1-\mu} \sum_{j=0}^{\infty}  s^j (j+1)^{-\beta+1}. \nonumber 
\Eea
The estimates (\ref{eqth}) and (\ref{eqhmu}) on $h_{\mu}$ for $0<\mu<(\beta-1)\wedge 1$ and $\mu=\beta-1$ now follow directly from  (\ref{bound1}). For $\mu=1$, $\beta>2$ and $s=1$, the sum is finite and (\ref{eqth}) also holds in this case.\\

$\quad$ For the second part of the lemma, we explicitly calculate the first derivative of $h_{\beta-1}$ by using the formula 
\begin{eqnarray}
 h_{\beta-1}(s) g(s)^{\beta-1}  &=& \frac{g(1)^{\beta-1}-g(s)^{\beta-1}}{g(1)^{\beta-1}(1-s)^{\beta-1}}. \nonumber
\end{eqnarray}
Differentiating both sides yields
$$
h_{\beta-1}'(s) g(s)^{\beta-1}+(\beta-1)h_{\beta-1}(s) g(s)^{\beta-2} g'(s)= \frac{(\beta-1) ([g(1)^{\beta-1}-g(s)^{\beta-1}]-(1-s)g(s)^{\beta-2} g'(s))}{g(1)^{\beta-1}(1-s)^{\beta}}
$$
and thus
$$
 -h_{\beta-1}'(s) \leq (\beta-1)\Big(\frac{ h_{\beta-1}(s) g'(s)}{g(s)} + \frac{g'(s)}{g(s)g(1)^{\beta-1}(1-s)^{\beta-1}}-\frac{g(1)^{\beta-1}-g(s)^{\beta-1}}{g(s)^{\beta-1}g(1)^{\beta-1}(1-s)^{\beta}}\Big)$$

As $g$ is nondecreasing, we can skip the the last term which is negative. Using (\ref{versionH}) and (\ref{eqhmu}), we get
$$
 -h_{\beta-1}'(s) \leq_c 
 \frac{g(0)\cdot(g(1)\wedge 1)\cdot \sum_{k=1}^{\infty} ks^{k-1} k^{-\beta}}{g(s)}\Big( \Upsilon(1/(1-s))
+ \frac{1}{g(1)^{\beta-1}(1-s)^{\beta-1}} \Big).
$$
Moreover  $g(s)\geq g(0)$ and   $g(1)^{-(\beta-1)}\cdot(g(1)\wedge 1)\leq 1$ for $\beta-1\in(0,1]$, so
$$-h_{\beta-1}'(s)
\leq_c \sum_{k=1}^{\infty} s^{k-1} k^{-\beta+1}\Big(\Upsilon(1/(1-s))
+ \frac{1}{(1-s)^{\beta-1}}\Big).
$$
  The result now follows from (\ref{bound1}) and the fact that the product of two slowly varying functions is still slowly varying.
\end{proof}

We consider now
$$h(s)=h_1(s)=\frac{g(1)-g(s)}{g(1)g(s)(1-s)}.$$

\begin{lemma}\label{lem3} We assume that (\ref{versionH})
holds for some $\beta>1$.
Then there exists a finite constant $c=c(\beta,d)<\infty$ such that for every $s\in[0,1)$,
\begin{eqnarray}
 |h^{(l)}(s)| &\leq &  c \qquad \quad  \quad \quad \quad \quad \quad \quad \quad \quad \quad \quad \ \mbox{if} \quad 0\leq l<\beta-2 \nonumber \\
|h^{(\lceil \beta\rceil-2)} (s)| &\leq & c  \Upsilon(1/(1-s)) \ (1-s)^{-(\lceil\beta\rceil-\beta)} \quad \mbox{if} \quad \beta\geq 2  \nonumber \\
|h^{(\lceil \beta\rceil-1)} (s)| &\leq & c  \ \Upsilon(1/(1-s)) \ (1-s)^{-1-(\lceil\beta\rceil-\beta)}. \label{as11}
\end{eqnarray}
\end{lemma}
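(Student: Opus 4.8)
The plan is to first strip the apparent singularity of $h=h_1$ at $s=1$, and then reduce the whole statement to elementary estimates on the coefficients of two power series together with a Fa\`a di Bruno expansion. Writing $q_k=\mathbb P(L>k\mid f)$, the Cauchy product identity $g(1)-g(s)=\sum_{k\ge1}(1-s^k)q_k=(1-s)\sum_{j\ge0}r_j s^j$ with $r_j:=\sum_{k>j}q_k$ gives
$$h(s)=\frac{g(1)-g(s)}{g(1)\,g(s)\,(1-s)}=\frac{\widetilde G(s)}{g(1)\,g(s)},\qquad \widetilde G(s):=\sum_{j\ge0}r_j s^j,$$
which is a product of the power series $\widetilde G$, the constant $1/g(1)$, and $1/g$, all smooth on $[0,1)$ with $g\ge g(0)>0$ on $[0,1]$. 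Hence, by the Leibniz rule and the Fa\`a di Bruno formula for $1/g$ (as in Lemma \ref{lemder}, with outer function $x\mapsto1/x$),
$$h^{(l)}(s)=\frac{1}{g(1)}\sum_{i=0}^{l}\binom{l}{i}\widetilde G^{(i)}(s)\,\Big(\tfrac1g\Big)^{(l-i)}(s),$$
where $(1/g)^{(l-i)}(s)$ is a sum, whose number of summands and coefficients depend only on $l$, of terms $c_{\mathbf m}\,g(s)^{-(p+1)}\prod_{q=1}^{p}g^{(m_q)}(s)$ with $p\le l-i$ and $m_1+\dots+m_p=l-i$.

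For the coefficient bounds I would use $(\ref{versionH})$ in the form $q_k\leq_c g(0)(g(1)\wedge1)k^{-\beta}$, which yields $r_j\leq_c g(0)(g(1)\wedge1)(1+j)^{-(\beta-1)}$ and hence, for every $i\geq0$ and $s\in[0,1)$,
$$\widetilde G^{(i)}(s)\ \leq_c\ g(0)(g(1)\wedge1)\sum_{m\geq0}(1+m)^{\,i-(\beta-1)}s^m\ \leq_c\ g(0)(g(1)\wedge1)\times
\begin{cases}1 & \text{if } i<\beta-2,\\ \Upsilon(\tfrac1{1-s})\,(1-s)^{\beta-2-i} & \text{if } i\geq\beta-2,\end{cases}$$
the two cases by convergence of the series and by $(\ref{bound1})$ respectively; similarly $g^{(m)}(s)\leq_c g(0)(g(1)\wedge1)$ for $m<\beta-1$ and $g^{(m)}(s)\leq_c g(0)(g(1)\wedge1)\Upsilon(\tfrac1{1-s})(1-s)^{\beta-1-m}$ for $m\geq\beta-1$, while $1/g(s)\leq1/g(0)$. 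All hidden constants depend only on $\beta$, $d$ and on the (finitely many, since orders are $\le\lceil\beta\rceil-1$) values of the indices, hence only on $\beta$ and $d$; as in the other proofs we may assume $\Upsilon\geq1$.

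It remains to plug these into a generic term $\tfrac1{g(1)}\binom li\widetilde G^{(i)}\,c_{\mathbf m}\,g^{-(p+1)}\prod_q g^{(m_q)}$ with $i+\sum_q m_q=l$. Its numerator carries $g(0)^{1+p}(g(1)\wedge1)^{1+p}$ (one factor from $\widetilde G^{(i)}$, one from each $g^{(m_q)}$) and its denominator $g(0)^{p+1}g(1)$; the powers of $g(0)$ cancel and $(g(1)\wedge1)^{1+p}/g(1)\leq(g(1)\wedge1)^p\leq1$, so the prefactor is $\leq_c1$ with constant depending only on $\beta,d$, and what remains is a product of $(1-s)$-powers and $\Upsilon$-factors. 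If $l<\beta-2$, then $i<\beta-2$ and $m_q<\beta-1$ for all admissible splits, so no singular factor occurs and $|h^{(l)}(s)|\leq_c1$. For $l=\lceil\beta\rceil-1$, the only split with some $m_q\geq\beta-1$ is $i=0,\ p=1,\ m_1=\lceil\beta\rceil-1$, and a short computation from the bounds above (using $\beta>1$) shows this term still obeys $\leq_c\Upsilon(\tfrac1{1-s})^{2}(1-s)^{-1-(\lceil\beta\rceil-\beta)}$, which is the asserted bound after absorbing the product of slowly varying functions into a single $\Upsilon$. In all remaining splits, and in the case $l=\lceil\beta\rceil-2$, every $m_q<\beta-1$, so only $\widetilde G^{(i)}$ can be singular, and the worst choice $i=l$ produces exactly $(1-s)^{\beta-2-l}$ — that is $(1-s)^{-(\lceil\beta\rceil-\beta)}$ when $l=\lceil\beta\rceil-2$ and $(1-s)^{-1-(\lceil\beta\rceil-\beta)}$ when $l=\lceil\beta\rceil-1$ — together with a single $\Upsilon$, while smaller $i$ give weaker singularities since $\lceil\beta\rceil\geq\beta$ and $\Upsilon\geq1$. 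Summing the finitely many terms yields the three claimed bounds.

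The main obstacle is exactly this bookkeeping: one must verify (i) that the distribution-dependent quantities $g(0)$ and $g(1)\wedge1$ always cancel out, so that the constant depends only on $\beta$ and $d$, and (ii) that among the splittings $i+\sum_q m_q=l$ the singularity at $s=1$ is maximised by placing every derivative on $\widetilde G$ — the underlying reason being that shifting one derivative from $\widetilde G$ to $g$ replaces a factor $(1-s)^{-1}$ in $\widetilde G^{(i)}$ by at most $(1-s)^{-1}$ in some $g^{(m_q)}$ but lowers the surviving index. Beyond $(\ref{bound1})$ and the concavity inequality already exploited for Lemma \ref{bound2}, no analytic difficulty arises.
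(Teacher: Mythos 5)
Your proof is correct, and it takes a genuinely different route from the paper's. Both proofs start from the same Cauchy-product identity $h(s)=\widetilde G(s)/(g(1)g(s))$ with $\widetilde G(s)=\sum_j r_j s^j$, $r_j=\sum_{k>j}q_k$, and both exploit the coefficient bound $r_j\leq_c g(0)(g(1)\wedge1)(1+j)^{-(\beta-1)}$ together with the Tauberian estimate~(\ref{bound1}). The paper, however, differentiates the product $g\cdot h=\widetilde G/g(1)$ by Leibniz, isolates $g\,h^{(l)}$, and runs an induction on $l$, reinserting the already-proved bounds on $h^{(l')}$ for $l'<l$. You instead differentiate $\widetilde G\cdot(1/g)$ directly, combining Leibniz with the Fa\`a di Bruno expansion of $(1/g)^{(m)}$ (which the paper already records as Lemma~\ref{lemder}), and bound each block $\widetilde G^{(i)}$ and $g^{(m_q)}$ separately. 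This trades the paper's recursion for a finite combinatorial enumeration of splittings $i+\sum_q m_q=l$; the payoff is that no induction is needed and that you never have to rely on the finiteness of $g^{(j)}(1)$ for $j$ near $\beta-1$, which in the paper's write-up is treated somewhat loosely. Your bookkeeping of the constants is right: the powers of $g(0)$ cancel exactly, and $(g(1)\wedge1)^{1+p}/g(1)\leq1$, so the constant depends only on $\beta,d$. Your comparison of singularities across splittings (each ``active'' factor subtracts at least $\beta-2>0$ or $\beta-1>0$ from the available derivative budget, so concentrating all derivatives on $\widetilde G$ is worst) is also correct. One small remark: the only place a $\Upsilon^2$ can arise is the split $i=0,\,p=1,\,m_1=l$ with \emph{both} factors singular, which happens only for $\beta\in(1,2]$; there you need to absorb $\Upsilon^2$ into a larger slowly varying majorant, which is harmless. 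For $\beta>2$ --- the only regime in which this lemma is actually invoked in the paper (via Lemma~\ref{lemdif1}) --- $\widetilde G$ itself is bounded in that split, so only a single $\Upsilon$ appears and the issue does not arise.
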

\begin{proof} 
By (\ref{1301}) and Cauchy product of power series, for every $s\in [0,1)$,
\begin{eqnarray}
g(s) g(1) h(s) &=& \frac{g(1)-g(s)}{1-s} \ = \ \sum_{k=0}^{\infty} s^k (q_{k+1}+q_{k+2}+\ldots).  \nonumber 
\end{eqnarray}
Thus, the $l$-th derivative of $g(s)h(s)$ is 
\begin{eqnarray}
 \sum_{j=0}^{l} {l\choose j} g^{(j)}(s) h^{(l-j)}(s) &=& g(1)^{-1} \sum_{k=0}^{\infty} k(k-1)\cdots (k-l+1)s^{k-l} (q_{k+1}+q_{k+2}+\ldots). \nonumber  
\end{eqnarray}
Moreover,  (\ref{versionH}) ensures that for all $s\in[0,1)$ and $j<\beta-2$,
\begin{eqnarray}
 g^{(j)}(s)\leq  g^{(j)}(1) \ \leq \ \sum_{k=0}^{\infty} k^{j} q_k \ \leq_c g(0) (g(1)\wedge 1) \nonumber  
\end{eqnarray}
Combining the two last expressions
and using $g(s)^{-1}\leq g(0)^{-1}$ gives
\bea
|h^{(l)}(s)|&\leq_c& g(s)^{-1}\Big(g(1)^{-1} g(0) \cdot (g(1)\wedge 1)\cdot  \sum_{k=0}^{\infty} k^l s^{k-l} \sum_{j=k+1}^{\infty} j^{-\beta}+ \sum_{j=1}^{l}  {l\choose j} g^{(j)}(1) |h^{(l-j)}(s)|\Big) \nonumber \\
&\leq_c&\sum_{k=0}^{\infty} k^l s^{k-l} \sum_{j=k+1}^{\infty} j^{-\beta}+ \sum_{j=1}^{l}  {l\choose j} |h^{(l-j)}(s)|\Big) \label{p1} 
\eea
We can  prove the first statement of the lemma by induction on $l$. For $l=0$, it is  given by Lemma \ref{bound2}.  Assuming that the bounds holds for $l'< l<\beta-2$, the previous inequality ensures that
$$
|h^{(l)}(s)|
\leq_c 1 \  + \sum_{j=0}^{l-1} |h^{(j)}(s)|$$
since  $\sum_{k=0}^{\infty} k^l\sum_{j=k+1}^{\infty} j^{-\beta}<\infty$. This ends up the induction and proves the first estimate in (\ref{as11}). \\

We consider now  $l=\lceil \beta \rceil-2$ and 'continue the induction'. Using the bound of $h^{(l)}$ for $l<\beta-2$ and (\ref{p1})  yields
$$|h^{(l)}(s)|\leq_c\sum_{k=0}^{\infty} k^l s^k \sum_{j=k+1}^{\infty} j^{-\beta}  + 1
\leq_c \sum_{k=1}^{\infty} s^k k^{\lceil \beta \rceil-2} k^{-\beta+1}  + 1 
\leq_c \sum_{k=1}^{\infty} s^k k^{-(1-(\lceil\beta\rceil-\beta))}.$$
Then the second estimate of the lemma  follows from (\ref{bound1}). \\

Finally, we prove the bound for  $l=\lceil \beta \rceil -1$ in the same way. By (\ref{p1}):
\begin{eqnarray}
 |h^{(l)}(s)|&\leq_c&\sum_{k=0}^{\infty} k^l s^k \sum_{j=k+1}^{\infty} j^{-\beta} +  l g^{(2)}(1) |h^{(l-1)}(s)| + 1 \nonumber \\
&\leq_c&\sum_{k=1}^{\infty} s^k k^{\lceil \beta \rceil-\beta} +\sum_{k=1}^{\infty} s^k k^{-(1-(\lceil\beta\rceil-\beta))} + 1\nonumber \\
&\leq_c&\sum_{k=1}^{\infty} s^k k^{\lceil\beta\rceil-\beta} \ .\nonumber 
\end{eqnarray}
and Lemma \ref{bound1} allows us to conclude. 
\end{proof}

\subsection{Successive Differentiation for composition of functions}
For the proof of the upper bound on the tail probabilities when $\beta>2$, we need to calculate higher order derivatives of a composition of functions. Here we prove a useful formula for the $l$-th derivative of a composition of two functions, which could also be derived from the combinatorial form of Faà di Bruno's formula. 
\begin{lemma}
Let $f$ and $h$ be real-valued, $l$-times differentiable functions. Then 
\begin{eqnarray}
 \frac{d^l}{ds^l} h(f(s)) &=& \sum_{j=1}^{l} h^{(j)}(f(s)) u_{j,l}(s), \label{der1}
\end{eqnarray}
where $u_{j,l}(s)$ is given by
\begin{eqnarray}
 u_{j,l} (s) &=& \sum_{i=(i_1,\ldots,i_{2j})\in \mathcal{C}(j,l)} c_i (f^{(i_1)}(s))^{i_2} \cdots (f^{(i_{2j-1})})(s))^{i_{2j}},  \label{der2} 
\end{eqnarray}
with some constants $0\leq c_i<\infty$  and  $\mathcal{C}(j,l)$  defined by
\begin{eqnarray}
 \mathcal{C}(j,l) &:=& \big \{ (i_1, \ldots , i_{2j}) \in \mathbb{N}^{2j} \big | i_1 i_2+ i_3 i_4 + \ldots =l \mbox{ and } \ i_2+i_4+\ldots =j\big \}. \nonumber
\end{eqnarray}
 \label{lemder}
\end{lemma}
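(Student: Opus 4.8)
The plan is to prove Lemma~\ref{lemder} by induction on $l$. The case $l=1$ is simply the chain rule: $\frac{d}{ds}h(f(s)) = h'(f(s)) f'(s)$, which matches $(\ref{der1})$ with $j$ ranging only over $\{1\}$, $u_{1,1}(s) = f'(s)$, and $\mathcal{C}(1,1) = \{(1,1)\}$ so that $c_{(1,1)}=1$.

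For the induction step, assume $(\ref{der1})$ and $(\ref{der2})$ hold for $l$. Differentiating $\frac{d^l}{ds^l}h(f(s)) = \sum_{j=1}^{l} h^{(j)}(f(s)) u_{j,l}(s)$ once more with the product rule gives
\begin{eqnarray}
 \frac{d^{l+1}}{ds^{l+1}} h(f(s)) &=& \sum_{j=1}^{l} \Big( h^{(j+1)}(f(s)) f'(s) u_{j,l}(s) + h^{(j)}(f(s)) u_{j,l}'(s)\Big). \nonumber
\end{eqnarray}
Reindexing the first sum (replacing $j$ by $j-1$) and collecting the coefficient of each $h^{(j)}(f(s))$ for $1\leq j\leq l+1$, one is led to set
\begin{eqnarray}
 u_{j,l+1}(s) &=& f'(s) u_{j-1,l}(s) + u_{j,l}'(s), \nonumber
\end{eqnarray}
with the convention that $u_{0,l}=0$ and $u_{l+1,l}=0$. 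It then remains to check that this $u_{j,l+1}$ has the claimed form $(\ref{der2})$ with exponent multi-indices in $\mathcal{C}(j,l+1)$. The term $f'(s) u_{j-1,l}(s)$ multiplies a product associated with some $i\in\mathcal{C}(j-1,l)$ by one extra factor $f^{(1)}(s)$: this raises the total derivative-weight $\sum i_{2r-1} i_{2r}$ from $l$ to $l+1$ and the factor-count $\sum i_{2r}$ from $j-1$ to $j$, so the new multi-index lies in $\mathcal{C}(j,l+1)$. For the term $u_{j,l}'(s)$, differentiating a single monomial $(f^{(i_1)})^{i_2}\cdots(f^{(i_{2j-1})})^{i_{2j}}$ by the product rule produces a sum of monomials in each of which exactly one factor $f^{(i_{2r-1})}$ is replaced by $f^{(i_{2r-1}+1)}$ (with a combinatorial constant $i_{2r}$ absorbed into the new $c_i$); this increases $\sum i_{2r-1}i_{2r}$ by $1$ to $l+1$ while leaving $\sum i_{2r}=j$ unchanged, so again the resulting multi-index is in $\mathcal{C}(j,l+1)$. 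The constants produced are finite nonnegative sums of the previous $c_i$'s, so $0\leq c_i<\infty$ is preserved. Combining the two contributions gives exactly $(\ref{der2})$ for $l+1$, completing the induction.

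I do not expect a genuine obstacle here; the only mildly delicate point is bookkeeping—making sure that the reindexing of the $h^{(j+1)}$-sum and the product-rule expansion of $u_{j,l}'$ both land in the correct combinatorial class $\mathcal{C}(j,l+1)$, and that one does not need to track the precise values of the constants $c_i$ (only their finiteness and nonnegativity, which is all the later proofs use). Alternatively, one could simply invoke the classical combinatorial Fa\`a di Bruno formula and observe that its terms can be regrouped into the stated form; but the direct induction above is self-contained and is the route I would write out.
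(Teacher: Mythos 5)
Your proof is correct and follows essentially the same route as the paper: induction on $l$ with the chain rule at $l=1$, then a single application of the product rule leading to the recursion $u_{j,l+1}=f'\,u_{j-1,l}+u_{j,l}'$, and a check that both contributions can be encoded with multi-indices in $\mathcal{C}(j,l+1)$. (Incidentally, you place the $u_{j,l}'$-contribution correctly in $\mathcal{C}(j,l+1)$; the paper's displayed computation writes $\mathcal{C}(j+1,l+1)$ there, which is an inconsequential typo since the corresponding term multiplies $h^{(j)}$, not $h^{(j+1)}$.)
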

\begin{proof} We prove the formula by induction with respect to $l$. For $l=1$, by chain rule of differentiation, (\ref{der1}) is fulfilled. Assume that (\ref{der1}) and (\ref{der2}) hold for $l$. Then by product rule for differentiation,
\begin{eqnarray}
  \frac{d^{l+1}}{ds^{l+1}} h(f(s)) &=& \sum_{j=1}^l \Big(h^{(j)}(f(s)) \frac{d}{ds} u_{j,l}(s) + u_{j,l}(s) f'(s) h^{(j+1)}(f(s))\Big) \ . \nonumber 
\end{eqnarray}
Now 
\begin{eqnarray}
u_{j,l}(s) f'(s) &=& \sum_{i\in\mathcal{C}(j,l)} c_i \big(f^{(1)}(s)\big)^1 (f^{(i_1)}(s))^{i_2} \cdots (f^{(i_{2j-1})}(s))^{i_{2j}} \nonumber \\
&=&\sum_{i\in\mathcal{C}(j+1,l+1)} \tilde c_i (f^{(i_1)}(s))^{i_2} \cdots (f^{(i_{2j+1})}(s))^{i_{2(j+1)}} \ , \nonumber 
\end{eqnarray} 
with new constants given by
\begin{eqnarray}
\tilde c_{i_1,i_2,i_3,\ldots, i_{2(j+1)}} &:=& \left\{ \begin{array}{l@{\quad,\quad}l}
                              c_{i_3,\ldots,i_{2(j+1)}} & \mbox{if} \ i_1=i_2=1 \\
0 & \mbox{else}
                             \end{array} \right. \ . \nonumber 
\end{eqnarray}
Furthermore,
\begin{eqnarray}
 \frac{d}{ds} u_{j,l}(s) 
 &=& \sum_{i\in\mathcal{C}(j,l)} \sum_{k=1}^{l} c_i (f^{(i_1)}(s))^{i_2} \cdots \ i_{2k}(f^{(i_{2k-1})}(s))^{i_{2k}-1} f^{(i_{2k-1}+1)}(s) \cdots (f^{(i_{2j-1})}(s))^{i_{2j}} \nonumber \\
&=& \sum_{i\in\mathcal{C}(j+1,l+1)} \hat c_i (f^{(i_1)}(s))^{i_2} \cdots (f^{(i_{2j+1})}(s))^{i_{2(j+1)}},\nonumber
\end{eqnarray}
with some new constants $0\leq \hat c_i <\infty$. This ends up the induction. \end{proof}

\bibliographystyle{apalike}
\end{document}